\newcounter{qcounter}
\define\isoto{\xrightarrow{\sim}}
\define\onto{\twoheadrightarrow}
\define\coker{\mathrm{coker}}
\define\Spec{\mathrm{Spec}}
\define\Ch{\mathrm{Char}_\Lambda}
\define\Ft{\mathrm{Fitt}_\Lambda}
\define\Supp{\mathrm{Supp}}
\define\E{\mathrm{E}}
\define\otLam{\otimes_{\Lambda}}
\newcommand{\dia}[1]{\langle #1 \rangle}
\newcommand{\st}[1]{{\rm\bf #1}}
\newcommand{\cyc}{\mathbf{1} \boldsymbol{-} \boldsymbol{\zeta}}
\newcommand{\Z}{\mathbb{Z}}
\newcommand{\Q}{\mathbb{Q}}
\newcommand{\Zp}{\Z/{p\Z}}
\newcommand{\fH}{\mathfrak{H}}
\newcommand{\h}{\mathfrak{h}}
\newcommand{\fP}{\mathcal{P}}
\newcommand{\sO}{\mathcal{O}}
\newcommand{\tH}{\tilde{H}}
\newcommand{\Lam}{\Lambda}
\newcommand{\I}{\mathcal{I}}
\newcommand{\p}{\mathfrak{p}}
\newcommand{\m}{\mathfrak{m}}
\newcommand{\U}{\Upsilon}
\newcommand{\bnu}{\overline{\nu}}
\newcommand{\chii}{{\chi^{-1}}}
\newcommand{\LpN}{\Z_p[[\Z_{p,N}^\times]]}
\newcommand{\Hom}{\text{Hom}}
\newcommand{\Gal}{\text{Gal}}
\newcommand{\Ext}{\text{Ext}}
\newcommand{\HDM}{\tH_{\mathrm{DM}}}
\newcommand{\X}{\mathfrak{X}}
\newcommand{\zinf}{\{0,\infty\}}
\newtheorem{thm}{Theorem}[section]
\newtheorem{cor}[thm]{Corollary}
\newtheorem{prop}[thm]{Proposition}
\newtheorem{lem}[thm]{Lemma}
\newtheorem{conj}[thm]{Conjecture}
\newtheorem*{app}{Theorem \ref{application cor}}
\newtheorem*{main}{Theorem \ref{main}}
\theoremstyle{definition}
\newtheorem{defn}[thm]{Definition}
\theoremstyle{remark}
\newtheorem{rem}[thm]{Remark}
\let\c@equation\c@thm
\numberwithin{equation}{section}
\title{Eisenstein Hecke algebras and conjectures in Iwasawa theory }
\author{Preston Wake}
\date{}
\begin{document}

\maketitle

\begin{abstract}
We formulate a weak Gorenstein property for the Eisenstein component of the $p$-adic Hecke algebra associated to modular forms. We show that this weak Gorenstein property holds if and only if a weak form of Sharifi's conjecture and a weak form of Greenberg's conjecture hold.
\end{abstract}

\section{Introduction}

In this paper, we study the relationship between the Iwasawa theory of cyclotomic fields and certain ring-theoretic properties of the Hecke algebra acting on modular forms. This continues work started in our previous paper \cite{wake}.

The philosophy of our work is that simplicity of the Iwasawa theory should correspond to simplicity of Hecke algebras. This philosophy comes from remarkable conjectures formulated by Sharifi \cite{sharifi}.

In \cite{wake}, we showed, under some assumptions, that if the Hecke algebra for modular forms is Gorenstein, then the plus part of the corresponding ideal class group is zero. In particular, we gave an example to show that this Hecke algebra is not always Gorenstein. 

Since the Hecke algebra is not always Gorenstein, it is natural to ask if there is a weaker ring-theoretic property that we can expect the Hecke algebra to have. In the present work, we formulate such a weaker property based on whether certain localizations of the Hecke algebra are Gorenstein. In a vague sense, we think of this condition as something like ``the obstructions to Gorenstein-ness are finite''.

We show that this weak Gorenstein property holds if and only if a weak form of Sharifi's conjecture and a  weak form of Greenberg's conjecture both hold. In particular, the weak Gorenstein property holds in every known example.

We make a few remarks before stating our results more precisely.

\subsubsection{Notation} In order to state our results more precisely, we introduce some notation. The notation coincides with that of \cite{wake}. 

 Let $p \ge 5$ be a prime and $N$ an integer such that $p \nmid \varphi(N)$ and $p \nmid N$. Let $\theta: (\Z/{Np \Z})^\times \to \overline{\Q}_p^\times$ be an even character and let $\chi=\omega^{-1} \theta$, where $\omega : (\Z/{Np \Z})^\times \to (\Z/p\Z)^\times \to \Z_p^\times$ denotes the Teichm\"{u}ller character. We assume that $\theta$ satisfies the same conditions as in \cite{kato-fukaya} -- namely that 1) $\theta$ is primitive, 2) if $\chi |_{(\Zp)^\times}=1$, then $\chi |_{(\Z/{N\Z})^\times}(p) \ne 1$,  3) if $N=1$, then $\theta \ne \omega^2$. 
 
 A subscript $\theta$ or $\chi$ will denote the eigenspace for that character for the  $(\Z/{Np \Z})^\times$-action (see Section \ref{notation}).
 
Let  $\Lam=\Z_p[[\Z_{p,N}^\times]]_\theta$ be the Iwasawa algebra, where $\Z_{p,N}^\times = \Z_p^\times \times (\Z/N\Z)^\times$. Let $\m_\Lambda$ be the maximal ideal of $\Lambda$.
 
Let $\fH$ (resp. $\h$) be the $\theta$-Eisenstein component of the Hecke algebra for $\Lambda$-adic modular forms (resp. cusp forms). Let $\I$ (resp. $I$) be the Eisenstein ideal of $\fH$ (resp. $\h$), and let $I_\fH \supset \I$ be the preimage of $I$ in $\fH$. Let $H$ be the cohomology group on which $\h$ acts (see Section  \ref{hecke def}).

Let $\Q_\infty=\Q(\zeta_{Np^\infty})$; let $M$ be the maximal abelian $p$-extension of $\Q_\infty$ unramified outside $Np$, and let $L$ be  the maximal abelian $p$-extension of $\Q_\infty$ unramified everywhere. Let $\X=\Gal(M/\Q_\infty)$ and $X=\Gal(L/\Q_\infty)$.

\subsection{Statement of Results} 

\subsubsection{Weakly Gorenstein Hecke algebras} We define what it means for the Hecke algebras $\h$ and $\fH$ to be weakly Gorenstein. In the case of $\h$, the definition comes from a condition that appears in work of Fukaya-Kato on Sharifi's conjecture \cite[Section 7.2.10]{kato-fukaya}, and is related to a condition that appears in work of Sharifi \cite{sharifi eisen}.

\begin{defn}
We say that $\h$ is {\em weakly Gorenstein} if $\h_\p$ is Gorenstein for every prime ideal $\p \in \Spec(\h)$ of height $1$ such that $I \subset \p$.

We say that $\fH$ is {\em weakly Gorenstein} if $\fH_\p$ is Gorenstein for every prime ideal $\p \in \Spec(\fH)$ of height $1$ such that $I_\fH \subset \p$.
\end{defn}
In general, neither the algebra $\h$ nor the algebra $\fH$ is Gorenstein. However, we conjecture that they are both weakly Gorenstein.
\begin{conj}
The Hecke algebras $\h$ and $\fH$ are weakly Gorenstein.
\end{conj}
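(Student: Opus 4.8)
Because the final statement is a conjecture rather than a theorem, I will sketch the attack I would mount and isolate the one genuinely open point on which it rests. \textbf{Reformulating Gorenstein-ness.} First I would unwind what ``$\h_\p$ Gorenstein'' means for a height-one prime $\p \supset I$. Since $\h$ is finite and flat (hence free) over $\Lam$, it is Cohen--Macaulay over $\Lam$; the contraction $\mathfrak q = \p \cap \Lam$ is a height-one prime of the two-dimensional regular local ring $\Lam$, so $\Lam_{\mathfrak q}$ is a discrete valuation ring and $\h_\p$ is a one-dimensional Cohen--Macaulay local $\Lam_{\mathfrak q}$-algebra. For such a ring, Gorenstein-ness is equivalent to the dualizing module $\Hom_{\Lam_{\mathfrak q}}(\h_\p,\Lam_{\mathfrak q})$ being $\h_\p$-free of rank one. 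Ohta's perfect duality on the cohomology group $H$ --- which identifies $\Hom_\Lam(H,\Lam)$ with a twist of $H$ as $\h$-modules --- lets one identify this dualizing module with a localization of $H$, reducing ``$\h_\p$ Gorenstein'' to ``$H_\p$ is $\h_\p$-free of rank one'', i.e. to the assertion that the Eisenstein-local part of $H$ gains no extra $\h$-generator at $\p$.

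\textbf{Passing to class groups.} Next I would apply the Sharifi-type comparison of $H$ with Galois cohomology (the constructions of Ohta, the theorems of Fukaya--Kato \cite{kato-fukaya}, and the Eisenstein-ideal analysis of \cite{wake}) to replace $H$ by the minus-part unramified Iwasawa module $\Xc$, whose characteristic ideal is the $p$-adic $L$-function $\xi$ by the Iwasawa Main Conjecture. The freeness criterion then becomes the statement that $(\Xc)_{\mathfrak q}$ is cyclic over $\Lam_{\mathfrak q}$ for every height-one $\mathfrak q$ with $\xi \in \mathfrak q$; and the comparison in \cite{wake} between $\Xc$, the full module $\X$, and the plus-part class group $X^+$ (which showed Gorenstein-ness forces $X^+ = 0$) shows that this cyclicity at all height-one $\mathfrak q$ is equivalent to $X^+$ having \emph{no} height-one prime in its support --- that is, to (a weak form of) Greenberg's conjecture. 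For $\fH$ I would use $\I \subset I_\fH$ together with the fibre-product-type presentation of $\fH$ built from $\h$ and the Eisenstein family: a height-one prime of $\fH$ containing $I_\fH$ either contracts to such a prime of $\h$ (handled above) or lands on the Eisenstein component, where the local ring is a localization of $\Lam$ and hence Gorenstein; a diagram chase then transfers weak Gorenstein-ness from $\h$ to $\fH$.

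\textbf{The obstacle.} This reduction bottoms out at exactly two Iwasawa-theoretic inputs: a height-one-local form of Sharifi's conjecture, needed to make the identification of the Eisenstein part of $H$ with $\Xc$ precise at all relevant primes (beyond what \cite{kato-fukaya} supplies unconditionally), and the pseudo-nullity of $X^+$, a weak form of Greenberg's conjecture. The hard part is the latter: Greenberg's conjecture is open in general and is known only via explicit computation or special structural arguments --- for instance when the $\lambda$-invariant of $\Xc$ is so small that $\Xc$ is forced to be cyclic. Both inputs hold in every case that has been computed, so the method proves rigorously that the weak Gorenstein property \emph{follows} from these weak forms of the Sharifi and Greenberg conjectures, and hence holds unconditionally in all known examples; an unconditional proof would require a new idea to control $X^+$.
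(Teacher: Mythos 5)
The statement you are addressing is a conjecture, and the paper does not prove it either; what the paper offers is exactly the kind of conditional reduction you are aiming for (Lemma \ref{h weakly gor lemma} for $\h$, and Theorem \ref{main} for $\fH$). Your treatment of $\h$ is in the right spirit, but the Iwasawa-theoretic condition you land on is misidentified: via Ohta's duality $H^-\cong\h^\vee$ and a Sharifi-type comparison, weak Gorenstein-ness of $\h$ comes down to pseudo-cyclicity of the \emph{odd} eigenspace $X_\chi$ (Greenberg's cyclicity conjecture, \st{C(Cyc' I)} in the paper's notation), not to pseudo-nullity of $X^+$. These are distinct conjectures; cyclicity of the minus part at height-one primes is neither implied by nor implies finiteness of $X_\theta$, and the paper explicitly keeps \st{C(Cyc' I)}, \st{C(Fin I/II)} as independent statements.

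The more serious gap is your claim that a fibre-product presentation and a diagram chase ``transfer weak Gorenstein-ness from $\h$ to $\fH$,'' with the remaining primes handled because the local ring is a localization of $\Lambda$. This fails. A height-one prime $\p\supset I_\fH$ does not split the problem that way: $\fH_\p$ genuinely mixes the Eisenstein and cuspidal parts, and by the boundary sequence $0\to H\to\tH\to\Lambda\to 0$ together with $\tH^-\cong\fH^\vee$, Gorenstein-ness of $\fH_\p$ is equivalent to $(\tH^-)_\p$ being generated by the cusp class $\zinf$, i.e.\ to $(H^-/I\HDM^-)_\p=0$ (Proposition \ref{hecke prop}). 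This is strictly more than freeness of $H^-_\p$ over $\h_\p$: the paper's main theorem shows that $\fH$ is weakly Gorenstein if and only if $\coker(\U)$ is finite \emph{and} $X_\theta/\xi_\chi X_\theta$ is finite, a weak plus-part Greenberg condition that does not follow from weak Gorenstein-ness of $\h$. Indeed, the non-Gorenstein examples of \cite{wake} have $X_\theta\ne 0$ and $X_\chi\ne 0$ while $\h$ can perfectly well be (weakly) Gorenstein there, so no unconditional $\h\Rightarrow\fH$ transfer can exist. Your ``obstacle'' paragraph therefore understates what is needed: besides a height-one form of Sharifi's conjecture, the $\fH$ half of the conjecture requires the finiteness of $X_\theta/\xi_\chi X_\theta$ as a separate, open input, and any correct reduction must make the cusp contribution (the module $H^-/I\HDM^-$) appear explicitly rather than disappear in a diagram chase.
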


\subsubsection{Relation to ideal class groups} These ring-theoretic properties of Hecke algebras are related to ideal class groups via Sharifi's conjecture \cite{sharifi}. Sharifi has constructed a map $$\U: X_\chi(1) \to H^-/IH^-$$ which he conjectures to be an isomorphism. 

A weaker conjecture is that $\U$ is a pseudo-isomorphism -- recall that a morphism of $\Lambda$-modules is called a  pseudo-isomorphism if its kernel and cokernel are both finite. If $\U$ is a pseudo-isomorphism, then $\h$ is weakly Gorenstein if and only $X_\chi$ is pseudo-cyclic (cf. Section \ref{h section} below). We have the following analogous result for $\fH$. In the statement of the theorem, $\xi_\chi$ is a characteristic power series for $X_\chi(1)$ as a $\Lambda$-module.

\begin{thm} 
\label{main}
Consider the following conditions.
\abcs
\item  $\fH$ is weakly Gorenstein
\item $\coker(\U)$ is finite
\item $X_\theta/ \xi_\chi X_\theta$ is finite. 
\endabcs
Condition (1) holds if and only if conditions (2) and (3) both hold.
\end{thm}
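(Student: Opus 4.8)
The plan is to translate the Gorenstein property of $\fH$ into the freeness of its dualizing module, localize along the $1$-dimensional locus cut out by the Eisenstein ideal, and then recognize the resulting ``defect'' module in Iwasawa-theoretic terms, using the structural results of \cite{wake} (which rest on work of Ohta and Fukaya--Kato). As $\fH$ is finite free over the two-dimensional regular local ring $\Lambda$ it is Cohen--Macaulay, and since $\Lambda$ is Gorenstein its dualizing module is $\fH^\vee:=\Hom_\Lambda(\fH,\Lambda)$; thus $\fH$ is Gorenstein at a prime $\p$ iff $\fH^\vee_\p$ is free of rank one over $\fH_\p$. By the identification $\fH/I_\fH=\h/I\cong\Lambda/\xi_\chi$ of \cite{wake}, the height-one primes $\p\supset I_\fH$ of $\fH$ correspond to the finitely many height-one primes of $\Lambda$ containing $\xi_\chi$; and since $\I\subset I_\fH\subset\p$ and $\fH/\I\cong\Lambda$, for such $\p$ one has $\fH^\vee_\p\otimes_{\fH_\p}k(\p)\cong(\fH^\vee/\I\fH^\vee)\otimes_\Lambda k(\p)$. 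Since $\fH$ is reduced, $\fH^\vee$ is faithful, so Nakayama's lemma gives that $\fH$ is Gorenstein at $\p$ iff $(\fH^\vee/\I\fH^\vee)_\p$ is cyclic over $\Lambda_\p$. Now $\fH^\vee/\I\fH^\vee$ has $\Lambda$-rank one and is $\Lambda$-free away from $V(\xi_\chi)$ (there $\fH_\p\cong\Lambda_\p\times\h_\p$, and $\fH^\vee/\I\fH^\vee$ localizes to $\Lambda_\p$), so its $\Lambda$-torsion submodule $Z$ is supported on $V(\xi_\chi)$, and cyclicity over the discrete valuation ring $\Lambda_\p$ is equivalent to $Z_\p=0$. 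The first reduction is therefore: $\fH$ is weakly Gorenstein if and only if $Z$ is finite.

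Next I would compute $Z$. The Eisenstein congruence gives $0\to\fH\to\h\oplus\Lambda\to\h/I\to 0$ (with $\Lambda=\fH/\I$), whose $\Lambda$-dual (using that $\h$ is $\Lambda$-free and $\h/I\cong\Lambda/\xi_\chi$) is $0\to\h^\vee\oplus\Lambda\to\fH^\vee\to\h/I\to 0$. Tensoring over $\fH$ with $\fH/\I$, and computing $\mathrm{Tor}_1^\fH(\h/I,\fH/\I)\cong\I/\I I_\fH$, yields a sequence
\[
\I/\I I_\fH\;\longrightarrow\;(\h^\vee/I\h^\vee)\oplus\Lambda\;\longrightarrow\;\fH^\vee/\I\fH^\vee\;\longrightarrow\;\h/I\;\longrightarrow\;0
\]
exact except possibly at the left end; since the first map lands in the $\Lambda$-torsion submodule $\h^\vee/I\h^\vee$, setting $W:=\coker(\I/\I I_\fH\to\h^\vee/I\h^\vee)$ we obtain $0\to W\oplus\Lambda\to\fH^\vee/\I\fH^\vee\to\h/I\to 0$ with $W\subseteq Z$. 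On the cuspidal side, $\h^\vee/I\h^\vee$ is $H^-/IH^-$ and, by the reciprocity relations of Fukaya--Kato, the image of the first map above agrees up to a finite module with the image of Sharifi's map $\U$, so $W$ is pseudo-isomorphic to $\coker(\U)$ --- this is essentially the cuspidal statement behind the case of $\h$ (cf.\ Section~\ref{h section}). On the Eisenstein side, $Z/W$ is the torsion submodule of the extension $0\to\Lambda\to(\fH^\vee/\I\fH^\vee)/W\to\Lambda/\xi_\chi\to 0$, hence is isomorphic to $\Lambda/(\gcd(c,\xi_\chi))$, where $c\in\Lambda$ lifts the extension class $\bar c\in\Ext^1_\Lambda(\Lambda/\xi_\chi,\Lambda)\cong\Lambda/\xi_\chi$; and the central computation of \cite{wake}, which expresses the Eisenstein (boundary) part of the dual Hecke module through the plus-part Iwasawa module $X_\theta$, identifies $Z/W$ up to finite modules with $X_\theta/\xi_\chi X_\theta$. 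Substituting both into $0\to W\to Z\to Z/W\to 0$ shows that $Z$ is finite if and only if $\coker(\U)$ and $X_\theta/\xi_\chi X_\theta$ are both finite, which together with the first reduction is precisely (1)$\iff$(2) and (3).

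The step I expect to be the main obstacle is the Eisenstein half of the computation of $Z$: identifying $Z/W$ --- equivalently, the extension class $\bar c$ --- with $X_\theta/\xi_\chi X_\theta$. This is where the plus-part class group enters, and pinning it down requires carefully tracking Ohta's comparison isomorphisms for the full (rather than merely cuspidal) cohomology and the Iwasawa-adjoint duality that trades the $\chi$-data built into $\fH$ for the $\theta$-data visible in $\fH^\vee$, and then checking that all these identifications are compatible modulo finite modules. The cuspidal identification $W\sim\coker(\U)$, by contrast, is essentially already available from \cite{kato-fukaya}.
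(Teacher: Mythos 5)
Your first reduction is sound and is essentially Proposition \ref{hecke prop} of the paper: via $\tH^-\cong\fH^\vee$, weak Gorenstein-ness of $\fH$ is equivalent to finiteness of the torsion defect of the dualizing module along the height-one primes containing $I_\fH$ (the paper phrases this as finiteness of $H^-/I\HDM^-$).

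The gap is in your computation of $Z$. Under the fiber-product description $\fH\cong\h\times_{\h/I}\Lambda$ that your dualized sequence presupposes, $\ker(\fH\to\h)$ is identified with $0\oplus\xi_\chi\Lambda$, on which $\fH$ acts through $\fH/\I$; hence $\I$ annihilates $\fH^\vee/\h^\vee\cong\Hom_\Lambda(\ker(\fH\to\h),\Lambda)\cong\Lambda$, i.e.\ $\I\fH^\vee$ lands inside the image of $\h^\vee$. Consequently $(\fH^\vee/\I\fH^\vee)/W\cong\Lambda$ is torsion-free, your extension $0\to\Lambda\to(\fH^\vee/\I\fH^\vee)/W\to\h/I\to 0$ is the tautological one $0\to\Lambda\xrightarrow{\xi_\chi}\Lambda\to\Lambda/\xi_\chi\to 0$, and so $Z/W=0$, i.e.\ $Z=W$. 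The proposed cuspidal/Eisenstein splitting therefore degenerates and cannot separate conditions (2) and (3); in particular the identification $W\sim\coker(\U)$ must fail in general, since it would render condition (3) superfluous. In fact $W$ is (up to the harmless difference between $\I$ and $I_\fH$) the whole defect module $H^-/I\HDM^-$, and the content of the theorem is precisely that this single module is an extension of $\coker(\U)$ by a piece whose finiteness is equivalent to that of $X_\theta/\xi_\chi X_\theta$. The correct decomposition does not come from the cuspidal-versus-boundary direction but from factoring the map $\I/\I I_\fH\to H^-/IH^-$ (equivalently, computing the image of $I\zinf$ in $H^-/IH^-$): by the results recalled in Section \ref{pairing section}, this image is the image of the composite $\X_\chii^\#(1)\otLam X_\chi(1)\xrightarrow{\nu''\otimes 1}\Lambda/\xi_\chi\otLam X_\chi(1)\xrightarrow{\U}H^-/IH^-$, and conditions (2) and (3) correspond to the two factors of this composite ($\coker(\nu'')\otLam X_\chi(1)$ is finite iff $X_\theta/\xi_\chi X_\theta$ is, by Lemmas \ref{lem2} and \ref{lem4}). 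Even granting that, the implication (1)$\Rightarrow$(3) needs one more input your sketch omits: finiteness of $\coker(\U)$ forces $\U$ to be injective (Proposition \ref{sha prop}, via Fitting ideals), which is what lets one extract finiteness of the $\nu''$-part from finiteness of the whole.
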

\begin{rem}\label{remark}
Note that if $X_\chi = 0$, then all three conditions hold trivially. Indeed, if $X_\chi = 0$, then $\fH=\Lambda$, the domain and codomain of $\Upsilon$ are $0$, and $ \xi_\chi$ is a unit (cf. \cite[Remark 1.3]{wake}).
\end{rem}
\begin{rem}
The conditions (2) and (3) are conjectured to hold in general (see Section \ref{relation}). In particular, they hold in all known examples. 
\end{rem}
\begin{rem}
Condition (2) is equivalent to the condition that $\U$ is an injective pseudo-isomorphism (see Proposition \ref{sha prop}).
\end{rem}
\begin{rem}
Condition (3) is strange: $\xi_\chi$ is the opposite of the usual $p$-adic zeta function that is related to $X_\theta$. That is, $X_\theta$ is annihilated by $\xi_\chii$, and not (at least not for any obvious reason) by $\xi_\chi$ .
\end{rem}

The proof of Theorem \ref{main} will be given in Section \ref{proof section}.

\subsubsection{Strong and weak versions of Sharifi's conjecture} One consequence of Sharifi's conjecture is that $X_\chi(1) \cong H^-/IH^-$ as $\Lambda$-modules. Since $X_\chi$ has no $p$-torsion, this would imply that $H^-/IH^-$ has no $p$-torsion, which Sharifi explicitly conjectures in \cite[Remark, pg. 51]{sharifi}. 

A theorem of Ohta implies that if $\fH$ is Gorenstein, then $X_\chi(1) \cong H^-/IH^-$ (cf. Theorem \ref{H gor implies U iso thm} below). Moreover, Ohta also also proves that $\fH$ is Gorenstein under a certain hypothesis (\cite[Theorem I]{comp2}, for example). Sharifi used this as evidence for his conjecture (\cite[Proposition 4.10]{sharifi}). 

Since it is now known that $\fH$ is not always Gorenstein (\cite[Corollary 1.4]{wake}), one may wonder if Sharifi's conjecture should be weakened to the statement ``$\U$ is pseudo-isomorphism'' (cf. Conjecture \ref{weak sharifi conj} below). Fukaya and Kato  (\cite{kato-fukaya}) have partial results on this version of the conjecture. When neither $\h$ nor $\fH$ is Gorenstein, we know of no evidence for Sharifi's conjecture that $\U$ is an isomorphism (and not just a pseudo-isomorphism); we hope that our next result can be used to provide evidence. This result concerns a module $H^-/I\HDM^-$. As explained in Section \ref{hecke section} below, $H^-/I\HDM^-$ measures how much the ring $\fH$ is ``not Gorenstein''. 

For a finitely generated $\Lambda$-module $M$, let $$d_{\m_\Lambda}(M)=\dim_{\Lambda/\m_\Lambda}(M/\m_\Lambda M).$$
Note that $d_{\m_\Lambda}(M)$ is the minimal number of generators of $M$ as a $\Lambda$-module.

\begin{thm}\label{application cor}
Assume that $X_\theta \ne 0$ and that $\h$ is weakly Gorenstein. Then we have 
$$d_{\m_\Lambda}(H^-/I\HDM^- ) \ge d_{\m_\Lambda}(X_\chi)$$
with equality if and only if $\U$ is an isomorphism.

If, in addition, $\# (X_\theta)= \# (\Lambda/\m_\Lambda)$, then $\U$ is an isomorphism if and only if 
$$
\#(H^-/I\HDM^-)=\# (\Lambda/\m_\Lambda)^{d_{\m_\Lambda}(X_\chi)}.
$$
\end{thm}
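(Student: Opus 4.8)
The plan is to use Theorem \ref{main} together with the interpretation of $H^-/I\HDM^-$ as a measure of the failure of $\fH$ to be Gorenstein. Since $\h$ is assumed weakly Gorenstein and $X_\theta \ne 0$, the first step is to reduce the claimed inequality on $d_{\m_\Lambda}$ to a statement about the cokernel of $\U$. Sharifi's map $\U : X_\chi(1) \to H^-/IH^-$ fits into a commutative diagram relating $H^-/IH^-$ and $H^-/I\HDM^-$ (the latter being the "Drinfeld-manin" or "non-Gorenstein" quotient discussed in Section \ref{hecke section}); I would first establish that $d_{\m_\Lambda}(H^-/I\HDM^-) = d_{\m_\Lambda}(\coker(\U)) + d_{\m_\Lambda}(X_\chi)$ up to the relevant correction, or more precisely that there is a short exact sequence (or at least a four-term exact sequence) whose outer terms are $\coker(\U)$, twisted appropriately, and $X_\chi$. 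The Nakayama-type additivity of $d_{\m_\Lambda}$ along short exact sequences (subadditive in general, additive when the sequence splits mod $\m_\Lambda$, which holds here because the relevant extension is controlled) then gives $d_{\m_\Lambda}(H^-/I\HDM^-) \ge d_{\m_\Lambda}(X_\chi)$, with equality precisely when $\coker(\U) \subset \m_\Lambda(\text{something})$, i.e.\ when $\coker(\U)$ contributes no new generators.

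For the equality case, I would argue as follows. By Theorem \ref{main}, the hypothesis that $\fH$ is weakly Gorenstein is equivalent to $\coker(\U)$ finite together with $X_\theta/\xi_\chi X_\theta$ finite; but here we are not assuming $\fH$ weakly Gorenstein, only $\h$ weakly Gorenstein, so I would instead work directly. The point is that $\U$ is always surjective after inverting $p$ (this is part of what is known about Sharifi's map, cf.\ the discussion around Proposition \ref{sha prop}), so $\coker(\U)$ is a torsion $\Lambda$-module, and $d_{\m_\Lambda}(\coker(\U)) = 0$ if and only if $\coker(\U) = 0$ if and only if $\U$ is surjective; combined with the known injectivity statements, $\U$ surjective is equivalent to $\U$ an isomorphism. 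So the equality $d_{\m_\Lambda}(H^-/I\HDM^-) = d_{\m_\Lambda}(X_\chi)$ holds iff $\coker(\U) = 0$ iff $\U$ is an isomorphism. This requires that adding a nonzero torsion module $\coker(\U)$ to the picture strictly increases $d_{\m_\Lambda}$; that is where the weak Gorenstein hypothesis on $\h$ enters, ensuring the relevant sequence does not have "accidental" cancellation in the number of generators.

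For the final statement, assume in addition $\#(X_\theta) = \#(\Lambda/\m_\Lambda)$, i.e.\ $X_\theta$ is (nonzero and) as small as possible, a one-dimensional $\Lambda/\m_\Lambda$-space; in particular $d_{\m_\Lambda}(X_\theta) = 1$ and $X_\theta$ is killed by $\m_\Lambda$. Then $d_{\m_\Lambda}(X_\chi) = 1$ as well (the reflection/duality between $X_\theta$ and $X_\chi$, or rather the fact that $X_\chi$ then also has a single generator — one should check $X_\chi \ne 0$, which follows from $X_\theta \ne 0$ via Iwasawa Main Conjecture considerations). Under this hypothesis the module $H^-/I\HDM^-$ is generated by $d_{\m_\Lambda}(X_\chi)$ elements iff it equals $(\Lambda/\m_\Lambda)^{d_{\m_\Lambda}(X_\chi)}$ as a set, because $H^-/I\HDM^-$ is itself annihilated by $\m_\Lambda$ (this is where $\#(X_\theta) = \#(\Lambda/\m_\Lambda)$ is used: it forces the ambient structures to be $\m_\Lambda$-torsion, so "minimal number of generators" and "cardinality" determine each other). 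Hence the cardinality condition $\#(H^-/I\HDM^-) = \#(\Lambda/\m_\Lambda)^{d_{\m_\Lambda}(X_\chi)}$ is equivalent to $d_{\m_\Lambda}(H^-/I\HDM^-) = d_{\m_\Lambda}(X_\chi)$, which by the first part is equivalent to $\U$ being an isomorphism.

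The main obstacle is pinning down the exact sequence relating $H^-/I\HDM^-$, $\coker(\U)$, and $X_\chi$ — i.e.\ making precise the assertion from Section \ref{hecke section} that $H^-/I\HDM^-$ "measures how much $\fH$ is not Gorenstein" and combining it with Sharifi's map — and in verifying that the weak Gorenstein hypothesis on $\h$ is exactly what forces $d_{\m_\Lambda}$ to be \emph{strictly} monotone in the relevant step rather than merely subadditive, so that equality of generator counts genuinely detects $\coker(\U) = 0$ rather than some proper nonzero submodule lying inside $\m_\Lambda$ times the total module.
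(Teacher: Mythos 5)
Your overall shape (an exact sequence whose generator counts are added, with the splitting supplied by the weak Gorenstein hypothesis on $\h$) matches the paper's strategy, but the proposal leaves exactly the decisive ingredients unidentified, and one of your auxiliary claims is false. The sequence you are looking for is the snake-lemma sequence of Lemma \ref{sequence lem}: once $\ker(\U)=0$ (which follows from Theorem \ref{fk h weak thm} plus Proposition \ref{sha prop}), it reads
$$0 \to \coker(\nu'')\otLam X_\chi(1) \to H^-/I\HDM^- \to \coker(\U) \to 0,$$
and the left-hand term is \emph{not} $X_\chi(1)$ but $\coker(\nu'')\otLam X_\chi(1)$. The inequality $d_{\m_\Lambda}(H^-/I\HDM^-)\ge d_{\m_\Lambda}(X_\chi(1))$ therefore requires two facts you do not supply: first, the sequence genuinely splits --- this is because Theorem \ref{fk h weak thm} identifies $\coker(\U)$ with the torsion submodule $(tor)$ of $H^-/IH^-$, giving a section; without the splitting, a sub need not contribute to $d_{\m_\Lambda}$ of the total module, so your appeal to ``additivity when the extension is controlled'' is precisely the gap. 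Second, one must show $d_{\m_\Lambda}(\coker(\nu''))=1$, i.e.\ $\coker(\nu'')\ne 0$; this is where the hypothesis $X_\theta\ne 0$ actually enters, via Lemma \ref{lem2}. Your proposal invokes $X_\theta\ne 0$ only vaguely and never connects it to $\nu''$.

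For the final statement, your claim that $\#(X_\theta)=\#(\Lambda/\m_\Lambda)$ forces $d_{\m_\Lambda}(X_\chi)=1$ by ``reflection/duality'' is wrong: $\theta$ is even and $\chi$ is odd, there is no such duality between $X_\theta$ and $X_\chi$ (cf.\ Proposition \ref{X and X prop} and the remark that these modules are not adjoint), and the intended applications are exactly those where $X_\chi$ is non-cyclic. Likewise it is not established that $H^-/I\HDM^-$ is killed by $\m_\Lambda$. The correct argument computes $\coker(\nu'')\cong\Lambda/\m_\Lambda$ from $X_\theta\cong\Lambda/\m_\Lambda$ (via \cite[Proposition 4.8]{wake} and the identification of $\coker(\nu'')$ with $\coker(\bnu)^\#/\xi_\chi\coker(\bnu)^\#$), so that the split summand $\coker(\nu'')\otLam X_\chi(1)$ has cardinality exactly $\#(\Lambda/\m_\Lambda)^{d_{\m_\Lambda}(X_\chi(1))}$; the stated cardinality identity then holds if and only if the complementary summand $\coker(\U)$ vanishes.
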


This theorem may be used to provide evidence for Sharifi's conjecture that $\U$ is an isomorphism in two ways. The first way is philosophical: although the ring $\fH$ is not always Gorenstein, we may like to believe that $\fH$ is ``as close to being Gorenstein as possible''. This translates to the belief that $H^-/I\HDM^-$ is as small as possible; the theorem says that $H^-/I\HDM^-$ is smallest when $\U$ is an isomorphism.

The second way is a method for providing computational evidence: Theorem \ref{application cor} may  allow one to compute examples where $\U$ is an isomorphism but where $\fH$ is not Gorenstein. We now outline a scheme for doing this. First, one finds an imaginary quadratic field with non-cyclic $p$-class group; this provides a character $\chi$ of order $2$ such that $X_\theta \ne 0$ and such that $\fH$ is not Gorenstein (cf. \cite[Corollary 1.4]{wake}). However, $\h$ will be weakly Gorenstein by Lemma \ref{h weakly gor lemma} below (or else we have found a counterexample to a famous conjecture!). The assumptions for Theorem \ref{application cor} are then satisfied. Then, if one can compute $H^-/I\HDM^-$ and $X_\chi$ sufficiently well, one can verify that $d_{\m_\Lambda}(H^-/I\HDM^- ) = d_{\m_\Lambda}(X_\chi).$

The proof of Theorem \ref{application cor} will be given in Section \ref{application}.

\subsection{Relation to known results and conjectures}\label{relation} Our results are related to previous results and conjectures of various authors, including Fukaya-Kato, Greenberg, Ohta, Sharifi, Skinner-Wiles and the present author. In the main text, we try to survey these results and conjectures. However, since this is an area with many conjectures, and many of the results are about the interrelation of the conjectures or proofs of special cases of the conjectures, the reader may find it difficult to see what is known, and what is unknown, and what exactly is conjectured.

In this section, we try to write down the conjectures and results in a compact but clear fashion. This involves creating an unorthodox naming convention, which we hope will aid in understanding the connections between the statements. The reader may wish to skip this section, and use it as a reference when reading the main text.

\subsubsection{Naming convention} We use \st{C(Y)} to denote a conjecture about Y, \st{Q(Y)} to denote a question about about Y (a statement that is not conjectured to be true or false), and \st{A(Y)} to denote an assumption about Y (a statement that is {\bf known} to be false in general).

Numbered statement are listed in increasing order of logical strength. For example, \st{Q(Y II)} is a questionable statement about Y that implies \st{C(Y I)}, a conjectural statement about Y.

\subsubsection{Finiteness conditions} We consider the following statements about finiteness and cyclicity of class groups.
\begin{description}
\item[C(Fin I)]  $X_\theta/ \xi_\chi X_\theta$ is finite.
\item[C(Fin II)] $X_\theta$ is finite. 
\item[A(Fin III)] $X_\theta=0$.
\item[A(Fin IV)] $\X_\theta=0$.
\item[C(Cyc I)] $\X_\theta \otimes_{\Z_p} \Q_p$ is cyclic as a $\Lambda \otimes_{\Z_p} \Q_p$-module.
\item[A(Cyc II)] $\X_\theta$ is cyclic as a $\Lambda$-module.
\item[C(Cyc' I)] $X_\chi \otimes_{\Z_p} \Q_p$ is cyclic as a $\LpN_\chi \otimes_{\Z_p} \Q_p$-module.
\item[A(Cyc' II)] $X_\chi$ is cyclic as a $\LpN_\chi$-module.
\end{description}
There are implications \st{A(Fin III)} $\implies$ \st{A(Cyc II)} and \st{C(Fin II)} $\implies$ \st{C(Cyc I)}. In the case $N=1$, \st{A(Fin III)} is actually a conjecture, known as the Kummer-Vandiver conjecture. The conjectures \st{C(Fin II)}, \st{C(Cyc I)}  and \st{C(Cyc' I)} are due to Greenberg \cite[Conjecture 3.5]{greenberg}. Note that there is no relation between the conjectures \st{C(Cyc' I)} and \st{C(Cyc I)} for our fixed choices of $\chi$ and $\theta$ (the modules are {\bf not} adjoint -- see Proposition \ref{X and X prop}).

As far as we know, the Conjecture \st{C(Fin I)} has never been considered before.

\subsubsection{Gorenstein conditions} We consider the following statements about Hecke algebras. 
\begin{description}
\item[C($\h$ I)] $\h$ is weakly Gorenstein.
\item[A($\h$ II)] $\h$ is Gorenstein.
\item[C($\fH$ I)] $\fH$ is weakly Gorenstein.
\item[A($\fH$ II)] $\fH$ is Gorenstein.
\end{description}
The fact that $\h$ is not always Gorenstein is can be deduced from results of Ohta (following ideas of Kurihara \cite{kurihara} and Harder-Pink \cite{harder-pink}, who consider the case $N=1$). Ohta proves the implication \st{A($\h$ II)} $\implies$ \st{A(Cyc' II)} (\cite[Corollary 4.2.13]{comp2}, for example).

The fact that $\fH$ is not always Gorenstein is \cite[Corollary 1.4]{wake}. The weakly Gorenstein conjectures are ours (although this paper shows that they are the consequence of conjectures by other authors).

\subsubsection{Conjectures of Sharifi type} We consider the following versions of Sharifi's conjecture. They concern maps $\varpi$ and $\U$ that were defined by Sharifi.
\begin{description}
\item[C($\U$ I)] $\coker(\U)$ is finite.
\item[C($\U$ II)] $\U$ is an isomorphism.
\item[C(S. I)] The maps $\U$ and $\varpi$ are pseudo-isomorphisms.
\item[C(S. II)] The maps $\U$ and $\varpi$ are inverse isomorphisms modulo torsion.
\item[C(S. III)] The maps $\U$ and $\varpi$ are inverse isomorphisms.
\end{description}
Note that \st{C(S. I)} implies \st{C($\U$ I)}, and  that \st{C(S. III)} is equivalent to \st{C(S. II)} + \st{C($\U$ II)}.

See \cite[Conjectures 4.12, 5.2 and 5.4]{sharifi} for the original statements of the conjecture and \cite[Section 7.1]{kato-fukaya} for some modified statements.
 
\subsubsection{A question about zeta functions} We consider the following statement about $p$-adic zeta functions that appears in \cite{kato-fukaya}.

\begin{description}
\item[Q($\xi$)] The factorization of $\xi_\chi$ in $\Lambda$ has no prime element occurring with multiplicity $>1$.
\end{description}
This statement holds in any known example (see \cite[pg. 12]{greenberg}). It is the author's impression that this statement is believed to hold in general, but that there is not enough evidence to call it a conjecture.

\subsubsection{Relations between the conditions}

Fukaya and Kato have made recent progress towards Sharifi's conjecture. They show the implications \st{C($\h$ I)} $\implies$ \st{C(S. I)} and \st{Q($\xi$)} $\implies$ \st{C(S. II)} \cite[Theorem 7.2.6]{kato-fukaya}. They also show that if \st{Q($\xi$)} and at least one of \st{A($\h$ II)} or \st{A($\fH$ II)} hold, then \st{C(S. III)} holds \cite[Corollary 7.2.7]{kato-fukaya}. Moreover, it can be shown that, if \st{C($\U$ I)}, then \st{C(Cyc' I)} is equivalent to \st{C($\h$ I)} (cf. Section 5.1 below). Therefore, their results imply that \st{C($\h$ I)} is equivalent to \st{C(S. I)} +  \st{C(Cyc' I)}.

Sharifi, using results of Ohta \cite{cong}, has shown that \st{A($\fH$ II)} $\implies$ \st{C($\U$ II)} \cite[Proposition 4.10]{sharifi}. As far as we know, there are no results on \st{C(S. III)} when neither \st{A($\h$ II)} nor \st{A($\fH$ II)} hold.

Ohta has also shown that \st{A(Fin IV)} $\implies$ \st{A($\fH$ II)} \cite{cong}. Similar results were obtained by Skinner-Wiles by a different method \cite{skinner-wiles}.

In our previous work we showed that \st{C($\U$ II)} and \st{A(Fin III)} together imply  \st{A($\fH$ II)}, and moreover that, if $X_\chi \neq 0$, then  \st{A($\fH$ II)} implies \st{A(Fin III)} \cite{wake}. 

The main result of this paper is that \st{C($\fH$ I)} is equivalent to \st{C($\U$ I)}+\st{C(Fin I)}.

\subsection{Acknowledgments} We owe a great debt to Romyar Sharifi for his interest in, and careful reading of, our previous paper \cite{wake}. It was he who asked what happens when $X_\theta$ is finite, and suggested that the answer could be found by the methods of \cite{wake}. We thank him for his interest and insight.

We also thank Takako Fukaya for answering our questions, Ralph Greenberg for his interest in this problem, and Chris Skalit for helpful discussions about commutative algebra. 

This work was done while the author received partial support from NSERC of Canada. We thank them for the support.

Some of the ideas for this work were developed while the author enjoyed a short stay at the Universit\'e Paris-Sud. We thank the university for their hospitality, and Olivier Fouquet for his invitation and for many interesting discussions during the stay.

We thank the anonymous referee for a detailed and encouraging report that contained many helpful comments and corrections. We especially appreciate a suggestion about the notation, which has made the paper much more readable.

Finally, we thank Professor Kazuya Kato. It was he who introduced us to this interesting area and who encouraged us to write up these results. We thank him for his questions, comments and corrections, great and small, and for his patient advice and wisdom.

\subsection{Conventions}\label{notation} If $\phi: G \to \overline{\Q}_p^\times$ is a character of a group $G$, we let $\Z_p[\phi]$ denote the $\Z_p$-algebra generated by the values of $\phi$, on which $G$ acts through $\phi$. If $M$ is a $\Z_p[G]$-module, denote by $M_\phi$ the $\phi$-eigenspace:
$$
M_\phi = M \otimes_{\Z_p[G]} \Z_p[\phi].
$$

For a field $K$, let  $G_K = \Gal(\overline{K}/K)$ be the absolute Galois group. For a $G_\Q$-module $M$, let $M^+$ and $M^-$ denote the eigenspaces of complex conjugation. 

We fix a system of primitive $Np^r$-th roots of unity $(\zeta_{Np^r})$ with the property that $\zeta_{Np^{r+1}}^p=\zeta_{Np^r}$.

\section{Conjectures in Iwasawa theory}

\subsection{Iwasawa theory of cyclotomic fields}\label{cyc field section} We review some important results from the classical Iwasawa theory of cyclotomic fields. Nice references for this material include \cite{greenberg}, \cite{greither}, and \cite{washington}.

\subsubsection{Class groups and Galois groups}   The main object of study is the inverse limit of the $p$-power torsion part of the ideal class group $\mathrm{Cl}(\Q(\zeta_{Np^r}))$. By class field theory, there is an isomorphism 
$$
X \cong \varprojlim \mathrm{Cl}(\Q(\zeta_{Np^r}))\{p\},
$$
where, as in the introduction, $X=\Gal(L/\Q_\infty)$ where $L$ is the maximal abelian pro-$p$-extension of $\Q_\infty$ unramified everywhere, and where $(-)\{p\}$ denotes the $p$-Sylow subgroup.

A closely related object is $\X=\Gal(M/\Q_\infty)$, where $M$ is the maximal abelian pro-$p$-extension of $\Q_\infty$ unramified outside $Np$. We will explain the relation between $X$ and $\X$ below. 

\subsubsection{Iwasawa algebra} The natural action of $\Gal(\Q(\zeta_{Np^r})/\Q)$ on  $\mathrm{Cl}(\Q(\zeta_{Np^r}))\{p\}$ makes $X$ a module over the group ring $\varprojlim \Z_p[\Gal(\Q(\zeta_{Np^r})/\Q)]$. 

We fix a choice of isomorphism $\Gal(\Q(\zeta_{Np^r})/\Q) \cong (\Z/Np^r\Z)^\times$, and this induces an isomorphism $\varprojlim \Z_p[\Gal(\Q(\zeta_{Np^r})/\Q)] \cong \Z_p[[\Z_{p,N}^\times]]$, where recall that we define $\Z_{p,N}^\times = \Z_p^\times \times (\Z/N\Z)^\times$. Note that the  surjection $\Z_{p,N}^\times \to (\Z/Np\Z)^\times$ splits canonically. We use this to identify $\Z_{p,N}^\times$ with  $\Gamma \times (\Z/Np\Z)^\times$, where $\Gamma$ is the torsion-free part of $\Z_{p,N}^\times$ (note that $\Gamma \cong \Z_p$).

The ring $\Z_p[[\Z_{p,N}^\times]]$ is, in general, a product of rings. To simplify things, we consider only a particular eigenspace for the action of the torsion subgroup $(\Z/Np\Z)^\times$ of $\Z_{p,N}^\times$. We define $\Lambda = \Z_p[[\Z_{p,N}^\times]]_{\theta}$. There are isomorphisms $\Lam \cong \sO[[\Gamma]] \cong \sO[[T]]$, where $\sO$ is the $\Z_p$-algebra generated by the values of $\theta$. Note that $\sO$ is the valuation ring of a finite extension of $\Q_p$, and so $\Lambda$ is a noetherian regular local ring of dimension 2 with finite residue field.

\subsubsection{The operators $\tau$ and $\iota$} We introduce some operations $\iota$ and $\tau$ on the rings $\Z_p[[\Z_{p,N}^\times]]$ and $\Lambda$, and related functors $M \mapsto M^\#$ and $M \mapsto M(r)$. This is a technical part, and the reader may wish to ignore any instance of these on a first reading.

Let $\iota \colon \Z_p[[\Z_{p,N}^\times]] \to \Z_p[[\Z_{p,N}^\times]]$ by the involution given by $c \mapsto c^{-1}$ on $\Z_{p,N}^\times$. Let $\tau \colon \Z_p[[\Z_{p,N}^\times]] \to \Z_p[[\Z_{p,N}^\times]]$ be the morphism induced by $[ c] \mapsto \bar{c}[ c ]$ for $c \in \Z_{p,N}^\times$, where $[c] \in \Z_p[[\Z_{p,N}^\times]]$ is the group element and where $\bar{c} \in \Z_p^\times$ is the projection of $c$.

Note that $\iota$ and $\tau$ do not commute, but $\iota \tau = \tau^{-1} \iota$. In particular, $\tau^r \iota$ is an involution for any $r \in \Z$.

For a $\Z_p[[\Z_{p,N}^\times]]$-module $M$, we let $M^\#$ (resp. $M(r)$) be the same abelian group with $\Z_p[[\Z_{p,N}^\times]]$-action changed by $\iota$ (resp. $\tau^r$). Note that the functors $M \mapsto M^\#$ and $M \mapsto M(r)$ are exact.

\subsubsection{$p$-adic zeta functions and characteristic ideals} 
We define $\xi_\chii, \xi_\chi \in \Lambda$ to be generators of the principal ideals $\Ch(X_\chii^\#(1))$ and $\Ch(X_\chi(1))$ respectively (see the Appendix for a review of characteristic ideals). 

The Iwasawa Main Conjecture (now a theorem of Mazur and Wiles \cite{mazur-wiles}) states that (a certain choice of) $\xi_\chii$ and $\xi_\chi$ can be constructed by $p$-adically interpolating values of Dirichlet $L$-functions. 

\begin{rem}\label{defn of xi rem} In our previous work \cite{wake}, we viewed $X_\chi$ and $X_\chii$ as $\Lambda$-modules via the isomorphisms
$$
\tau: \Z_p[[\Z^\times_{p,N}]]_{\chi} \isoto \Lambda ; \ \iota\tau: \Z_p[[\Z^\times_{p,N}]]_{\chii} \isoto \Lambda.
$$
We learned from the referee that this is an unusual choice of notation, and so we have adopted the above convention, which we learned is more standard. 

The element $\xi$ of \cite{wake} is the $\xi_\chi$ of this paper. However, the element denoted $\xi_\chii$ in \cite{wake} would be denoted $\iota \tau \xi_\chii$ in this paper. We hope this doesn't cause confusion.
\end{rem}

\subsubsection{Adjoints}\label{adjoints section1} For a finitely generated $\Lambda$-module $M$, let $\E^i(M) = \Ext^i_{\Lambda}(M,\Lambda)$. These are called the {\em (generalized) Iwasawa adjoints} of $M$.

This theory is important to us because of the following fact, which is well-known to experts.

\begin{prop}\label{X and X prop}
The $\LpN$-modules $X_\chii$ and $\X_\theta$ are both torsion and have no non-zero finite submodule, and we have 
$$
\X_\theta \cong \E^1(X_\chii(-1)).
$$
In particular, we have $\Ch(\X_\theta)=(\xi_\chii)$ as ideals in $\Lambda$.
\end{prop}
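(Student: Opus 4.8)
The plan is to prove the proposition in three stages, corresponding to its three assertions: (a) that $X_\chii$ is a finitely generated torsion $\Lam$-module with no nonzero finite submodule; (b) the isomorphism $\fXt\cong\E^1(X_\chii(-1))$; and (c) the characteristic-ideal identity. Stage (a) is classical Iwasawa theory, (b) is an instance of the reflection principle, and (c) is then purely formal.

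For (a): since $\chii=\omega\theta^{-1}$ satisfies $\chii(-1)=-1$, the module $X_\chii$ is a direct summand of the minus part $X^-$ of $X$, and it is classical (due to Iwasawa; see \cite{washington} or \cite{greenberg}) that $X^-$ is finitely generated and torsion over the Iwasawa algebra — the inverse limit of the $p$-parts of the class groups along a $\Z_p$-extension ramified at only finitely many primes is compact and torsion — and that it has no nonzero finite submodule, the cyclotomic tower over $\Q$ being ramified at essentially one prime. I would not establish the corresponding properties of $\fXt$ directly: over the two-dimensional regular local ring $\Lam$, the module $\E^1(M)$ is finitely generated, torsion, and has no nonzero finite submodule for every finitely generated torsion $M$ (this is in the Appendix), and $X_\chii(-1)$ inherits all the needed properties from $X_\chii$, so (a) together with (b) yields these properties for free.

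For (b): the approach is global (Poitou--Tate) duality over the cyclotomic tower. Put $S=\{v\mid Np\infty\}$ and $G_S=\Gal(\Q_S/\Q)$. Class field theory identifies $\X$ with the Pontryagin dual of $H^1(G_{\Q_\infty,S},\QpZp)$, so that $\fXt$ is the dual of an isotypic component of that group; regarding it through the complex $R\Gamma(G_{\Q,S},-)$ over the tower $\Q(\zeta_{Np^r})$ and applying the Iwasawa-theoretic form of Poitou--Tate duality (local Tate duality glued along $S$; see \cite{kato-fukaya}, or \cite{washington} for the classical unrolled form), one identifies $\fXt$, after passing to the twisted dual coefficients $\Z_p(1)$, with $\E^1_\Lam$ of the Iwasawa module $\varprojlim_r H^2(G_{\Q(\zeta_{Np^r}),S},\Z_p(1))$. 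Kummer theory and class field theory then identify this $H^2$-module with $\varprojlim_r\mathrm{Cl}(\Q(\zeta_{Np^r}))\{p\}$ modulo the contributions of the decomposition groups and the local invariant maps at the places in $S$; in the relevant eigenspace these extra terms vanish, and this is precisely where the hypotheses on $\theta$ enter — in particular, hypothesis (2) is exactly the condition that makes the terms at the primes above $p$ (whose splitting in $\Q(\zeta_{Np^r})$ is controlled by the image of $p$ in $(\Z/N\Z)^\times$) disappear — so that one recovers $X_\chii$. Tracking the Tate twist $\Z_p(1)$ and the eigenspace identifications through the functors $M\mapsto M^\#$ and $M\mapsto M(r)$ produces the $(-1)$, giving $\fXt\cong\E^1(X_\chii(-1))$. (The classical exact sequence $0\to\overline E_\infty\to\overline U_\infty\to\X\to X\to 0$ of Iwasawa theory would give the characteristic-ideal statement directly, but the clean module-level isomorphism is most transparent via duality.)

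For (c): granting (b), one combines the behavior of characteristic ideals under $\E^1$, under $M\mapsto M^\#$, and under Tate twists (all in the Appendix) with the relation $\iota\tau=\tau^{-1}\iota$ to obtain
$$\Ch(\fXt)=\Ch\bigl(\E^1(X_\chii(-1))\bigr)=\Ch\bigl(X_\chii^\#(1)\bigr)=(\xi_\chii),$$
the last equality being the definition of $\xi_\chii$. The hard part is entirely in stage (b): correctly normalizing the Iwasawa-theoretic Poitou--Tate duality, and verifying that the archimedean and the finite-place correction terms genuinely vanish in the $\theta$-eigenspace, so that duality lands on the everywhere-unramified module $X_\chii$ with precisely the twist $(-1)$ rather than on the larger unramified-outside-$Np$ module or a differently twisted one. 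This is the step that uses all three hypotheses on $\theta$; stages (a) and (c) are routine given the standard references.
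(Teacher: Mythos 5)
Your proposal is correct in substance but routes the main isomorphism differently from the paper. The paper disposes of the entire first sentence (torsion, no finite submodule, and $\X_\theta \cong \E^1(X_\chii(-1))$) by citing Corollary 4.4 of \cite{wake}, and then proves only the characteristic-ideal identity, exactly as in your stage (c): the pseudo-isomorphism $\E^1(M) \to M^\#$ of \cite[Proposition 5.5.13]{N-S-W} gives $\Ch(\E^1(X_\chii(-1))) = \Ch(X_\chii^\#(1)) = (\xi_\chii)$. Your stage (b), by contrast, sketches a from-scratch derivation of the isomorphism via Iwasawa-theoretic Poitou--Tate duality; this is essentially what underlies the cited corollary, and it correctly identifies both the mechanism (reflection/duality with a $(-1)$-twist) and the place where the hypotheses on $\theta$ are used (vanishing of the local correction terms at $S$). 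What your route buys is transparency about why the statement is true; what it costs is that the genuinely delicate part --- the normalization of the duality and the vanishing of the archimedean and $p$-adic terms in the $\theta$-eigenspace --- is only asserted, not carried out, whereas the paper outsources it to a reference where it is done. Two small corrections: the facts you attribute to the Appendix (that $\E^1$ of a finitely generated torsion module has no nonzero finite submodule, and the behavior of $\Ch$ under $\E^1$, $\#$, and twists) are not in this paper's Appendix, which treats only finiteness, characteristic ideals, and Fitting ideals; they come from \cite[Propositions 5.5.3 and 5.5.13]{N-S-W}. With those citations substituted, your stages (a) and (c) match the paper's logic, and your derivation of the properties of $\X_\theta$ from those of $X_\chii$ via $\E^1$ is a valid (and slightly more economical) alternative to quoting them directly.
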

\begin{proof}
The first sentence is explained in \cite[Corollary 4.4]{wake}. The second sentence follows from the fact that for any finitely generated, torsion ${\Lambda}$-module $M$, there is a pseudo-isomorphism $E^1(M) \to M^\#$ (\cite[Proposition 5.5.13, pg. 319]{N-S-W}). Since $\Ch(-)$ is a pseudo-isomorphism invariant, we have 
$$
\Ch(\E^1(X_\chii(-1))) = \Ch((X_\chii(-1))^\#) = \Ch(X_\chii^\#(1)) = (\xi_\chii),
$$
and so the second sentence follows from the first.
\end{proof}

\subsubsection{Exact sequence} There is a exact sequence
\begin{equation}\label{euxxc2}
\Lambda/\xi_\chii \to \X_\theta \to X_\theta \to 0.
\end{equation}
coming from class field theory and Coleman power series (cf. eg. \cite[Sections 3 and 5]{wake}).
From Proposition \ref{X and X prop} and the fact that $\Lambda/\xi_\chii$ has no finite submodule, we can see that $X_\theta$ is finite (resp. zero) if and only if the leftmost arrow in (\ref{euxxc2}) is injective (resp. an isomorphism).

\subsection{Finiteness and cyclicity of class groups} We discuss some statements of finiteness and cyclicity of ideal class groups.

\subsubsection{Kummer-Vandiver conjecture} We first consider the case $N=1$.

\begin{conj}[Kummer-Vandiver] \label{kummer-vandiver}
Assume $N=1$. Then $X^+=0$.
\end{conj}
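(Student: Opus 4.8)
The Kummer--Vandiver conjecture is famously open; what follows is therefore an outline of the natural approaches and of the obstruction each encounters, rather than a genuine plan. Recall first that $X^+=0$ is equivalent to the assertion that $p$ does not divide the class number $h_p^+$ of the maximal real subfield $\Q(\zeta_p)^+$, and, by the analytic class number formula $h_p^+=[\mathcal{E}^+:\mathcal{C}^+]$, to the statement that $p$ does not divide the index of the group of cyclotomic units inside the full unit group of $\Q(\zeta_p)^+$. The classical plan is therefore to produce enough explicit units --- or enough relations among the relevant Dirichlet $L$-values --- to force the $p$-part of this index to vanish. The difficulty is that nothing in the theory controls the $p$-adic valuation of $h_p^+$ itself: the Main Conjecture of Mazur--Wiles \cite{mazur-wiles} pins down the minus part of the class group exactly, but is silent on the plus part.

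The Iwasawa-theoretic reformulation is the one most directly connected to the present paper. Decomposing $X^+$ into eigenspaces for $(\Z/p\Z)^\times$, the vanishing $X^+=0$ is the conjunction, over all even characters, of statements of the form $X_\theta=0$; and, by Proposition \ref{X and X prop} together with the exact sequence (\ref{euxxc2}), $X_\theta=0$ holds precisely when the map $\Lambda/\xi_\chii \to \X_\theta$ appearing there is an isomorphism. Ferrero--Washington already gives $\mu^+=0$, and Greenberg's conjecture \cite{greenberg} would give $\lambda^+=0$, i.e. these maps \emph{injective} and the eigenspaces \emph{finite} --- but finiteness is strictly weaker than vanishing, so even a proof of Greenberg's conjecture would not settle Kummer--Vandiver. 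One would like instead to run a Kolyvagin-style Euler-system argument, but the only Euler system available here is that of cyclotomic units, which bounds $X^-$ and leaves $X^+$ untouched; no Euler system for the plus part is known. There is also a reformulation in the style of Kurihara \cite{kurihara} (and Soul\'e): Vandiver at $p$ is equivalent to the vanishing of the \'etale cohomology groups $H^2(\Z[1/p],\Z_p(n))$ for all even $n$ --- equivalently, to the vanishing of the $p$-torsion in the $K$-groups $K_{4k}(\Z)$ --- but this formulation has likewise resisted proof.

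The hard part, and the reason every one of these plans stalls, is structural: each analytic, cohomological, or Euler-system tool at our disposal reads off the \emph{minus} part of the cyclotomic class group, where it is controlled by $p$-adic $L$-functions, whereas Kummer--Vandiver is an assertion about the \emph{plus} part, for which no analogous handle is known. The conjecture has been verified computationally for all primes up to very large bounds, and heuristic arguments (see \cite{washington}) suggest that counterexamples, if any, are exceedingly rare; but these furnish no route to a proof.
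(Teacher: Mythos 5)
The statement you were asked about is the Kummer--Vandiver conjecture itself, which the paper records as \textbf{Conjecture} \ref{kummer-vandiver} and does not prove --- it is a famous open problem, used in the paper only as a hypothesis (e.g.\ in Lemma \ref{van lem}). You correctly recognize this and rightly decline to offer a proof; your survey of the standard reformulations (the index $[\mathcal{E}^+:\mathcal{C}^+]$, the eigenspace statements $X_\theta=0$ via the sequence (\ref{euxxc2}), the $K$-theoretic version, and the absence of a plus-part Euler system) is accurate and consistent with how the paper treats the conjecture. There is nothing to compare against on the paper's side, and no gap to report beyond the one inherent in the mathematics: the conjecture remains open.
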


\begin{lem}\label{van lem}
Assume $N=1$. If Conjecture \ref{kummer-vandiver} is true, then $\X_\theta$ and $X_\chii$ are cyclic.
\end{lem}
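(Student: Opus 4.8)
The plan is to reduce both cyclicity statements to the classical structure theory of the plus and minus parts of the cyclotomic class group under the Kummer--Vandiver hypothesis, together with the exact sequence (\ref{euxxc2}) and Proposition \ref{X and X prop}. Since $N=1$, the character $\theta$ is an even power $\omega^j$ of the Teichm\"uller character (with $j$ even, $j \not\equiv 0, 2 \pmod{p-1}$ by the standing hypotheses), and $\chi = \omega^{j-1}$ is odd. First I would recall the classical fact (see \cite{washington}, \cite{greenberg}) that under Conjecture \ref{kummer-vandiver}, the minus part $X^-$ is, in each odd eigenspace $X_\chii$, cyclic over $\Lambda$: indeed $X^+ = 0$ forces, via the reflection/Spiegelung relationship and Iwasawa's analysis of the $\Lambda$-module structure, that $X_\chii \cong \Lambda/(\xi_\chii)$ (here using that $\xi_\chii$ is a distinguished polynomial and that $X_\chii$ has no finite submodule by Proposition \ref{X and X prop}). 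This already gives that $X_\chii$ is cyclic.

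Next, for $\X_\theta$: by Proposition \ref{X and X prop} we have $\X_\theta \cong \E^1(X_\chii(-1))$, and the functors $M \mapsto M^\#$, $M \mapsto M(r)$ are exact, so from $X_\chii \cong \Lambda/(\xi_\chii)$ one computes $\E^1(\Lambda/(\xi_\chii)) \cong \Lambda/(\xi_\chii)$ as $\Lambda$-modules (a principal ideal in a regular local ring of dimension $2$ is a complete intersection, and $\E^1$ of a cyclic module $\Lambda/(f)$ with $f$ a nonzerodivisor is again $\Lambda/(f)$ up to the twist bookkeeping). Tracking the twist by $\tau^{-1}$ and $\iota$ shows $\X_\theta \cong \Lambda/(\xi_\chii)^{\#}(-1)$, which is visibly cyclic over $\Lambda$. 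Alternatively, and perhaps more cleanly, one can argue directly: the exact sequence (\ref{euxxc2}) presents $X_\theta$ as a quotient of $\Lambda/\xi_\chii$, and since $\X_\theta$ is torsion with $\Ch(\X_\theta) = (\xi_\chii)$ and has no nonzero finite submodule, the surjection $\Lambda/\xi_\chii \twoheadrightarrow \X_\theta/(\text{image of }\Lambda/\xi_\chii \to \X_\theta)$ combined with Kummer--Vandiver (which kills the ``extra'' contribution measured by $X^+$) forces the map $\Lambda/\xi_\chii \to \X_\theta$ in (\ref{euxxc2}) to be surjective, hence $\X_\theta$ is a quotient of $\Lambda/\xi_\chii$ and in particular cyclic.

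The main obstacle is making precise the step ``Kummer--Vandiver implies $X_\chii \cong \Lambda/(\xi_\chii)$''. This is standard but requires care: one needs that $X^+ = 0$ implies $X_\chii$ has no nonzero \emph{finite} submodule and $\mu = 0$ (the latter is Ferrero--Washington), and then that its characteristic ideal being \emph{principal generated by a distinguished polynomial} together with these properties forces cyclicity — this uses the structure theorem for $\Lambda$-modules plus the fact that $\Lambda$ is a UFD and a pseudo-isomorphism from $\Lambda/(\xi_\chii)$ to a module with no finite submodule and the right characteristic ideal is forced to be an isomorphism when the target is generated by one element locally at height-one primes. I expect the cleanest writeup routes everything through (\ref{euxxc2}) and the remark following it: $X_\theta$ is zero or finite according to the injectivity of $\Lambda/\xi_\chii \to \X_\theta$, Kummer--Vandiver gives the relevant vanishing in the plus part, and then one transports cyclicity of $\X_\theta$ to cyclicity of $X_\chii$ via the adjointness isomorphism of Proposition \ref{X and X prop}.
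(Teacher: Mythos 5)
Your proposal is correct in substance, but it runs the logic in the opposite direction from the paper, and its primary route for $X_\chii$ leans on quoting the conclusion. The paper's proof goes: Kummer--Vandiver gives $X_\theta=0$, the exact sequence (\ref{euxxc2}) then exhibits $\X_\theta$ as a quotient of $\Lambda/\xi_\chii$, hence cyclic; and then $X_\chii$ is recovered from $\X_\theta$ via the biduality $X_\chii(-1)\cong \E^1(\E^1(X_\chii(-1)))\cong \E^1(\X_\theta)$, together with the observation that $\X_\theta$ has projective dimension one, so dualizing a one-generator, one-relation presentation shows $\E^1(\X_\theta)$ is again cyclic. Your ``alternative, cleaner'' argument for $\X_\theta$ and your closing sentence describe exactly this route, so the correct proof is contained in your writeup. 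By contrast, your primary route --- ``recall the classical fact that Kummer--Vandiver implies $X_\chii\cong\Lambda/(\xi_\chii)$'' --- is citing precisely the nontrivial half of the lemma as known (it is indeed in Washington, but the point of the lemma is to derive it from the ingredients at hand), and the sketch you offer of that fact (reflection, structure theorem, pseudo-isomorphisms forced to be isomorphisms) is too vague to stand on its own; note also that $\mu=0$ and the distinguished-polynomial form of $\xi_\chii$ play no role in the paper's argument. The one step you should make explicit if you follow the paper's direction is why cyclicity transports across $\E^1$: you need both that $X_\chii(-1)$ is torsion with no nonzero finite submodule (so that $\E^1\circ\E^1$ is the identity on it, \cite[Proposition 5.5.8 (iv)]{N-S-W}) and that $\E^1$ of a cyclic module of projective dimension one is cyclic; your computation $\E^1(\Lambda/(f))\cong\Lambda/(f)$ is the special case of this, but in the paper's direction one does not yet know $\X_\theta$ has that form, only that it is cyclic.
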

\begin{proof}
If $X_\theta=0$, we see from (\ref{euxxc2}) that $\X_\theta$ is cyclic. We wish to show that $X_\chii$ is cyclic. It is enough to show that $X_\chii(-1)$ is cyclic, and we claim that this follows from the fact that  $\X_\theta$ is cyclic by Proposition \ref{X and X prop} and standard arguments from \cite{N-S-W}. Indeed, we have isomorphisms
$$
X_\chii(-1) \cong \E^1(\E^1(X_\chii(-1))) \cong \E^1(\X_\theta)
$$
coming from \cite[Proposition 5.5.8 (iv), pg. 316]{N-S-W} and Proposition \ref{X and X prop}, respectively. So it is enough to show that $\E^1(\X_\theta)$ is cyclic whenever $\X_\theta$ is. But this is clear from \cite[Proposition 5.5.3 (iv), pg. 313]{N-S-W}, which says that the projective dimension of $\X_\theta$ is $1$; if $\X_\theta$ is generated by one element, then there is exactly one relation, and the dual of the resulting presentation gives a cyclic presentation of $\E^1(\X_\theta)$.
\end{proof}

\subsubsection{Greenberg's conjecture}\label{greenberg section} For general $N>1$, there are examples where $X_\chi$ is not cyclic, and so $X^+$ is not always zero. However, it may still be true that $X^+$ is finite \cite[Conjecture 3.4]{greenberg}.

\begin{conj}[Greenberg]\label{green conj}
The module $X^+$ is finite.
\end{conj}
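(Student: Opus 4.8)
This is Greenberg's conjecture, a well-known open problem, so what follows is not a proof but a description of the shape a proof would take and of the obstruction one runs into. Eigenspace by eigenspace, finiteness of $X^+$ is equivalent to finiteness of $X_\theta$ for every even character $\theta$, and $X_\theta$ is pseudo-isomorphic to the relevant piece of $\varprojlim \mathrm{Cl}(k_n)\{p\}$ for the cyclotomic $\Z_p$-extension $k_\infty=\Q_\infty^+$ of the totally real field $k=\Q(\zeta_{Np})^+$. By Iwasawa's structure theorem such a module is finite if and only if its invariants $\mu$ and $\lambda$ both vanish; the vanishing of $\mu$ for abelian base fields is the theorem of Ferrero--Washington, so the content is the vanishing of $\lambda^+$. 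The plan would therefore be to prove $\lambda^+=0$, for which Greenberg's paper gives the equivalent reformulations that the norm maps $\mathrm{Cl}(k_{n+1})\{p\}\to\mathrm{Cl}(k_n)\{p\}$ are eventually isomorphisms, equivalently that every $p$-class of a finite layer capitulates at a bounded level, equivalently that a certain everywhere-unramified pro-$p$ Iwasawa module has no free part.

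The first thing to try would be to combine the Iwasawa Main Conjecture with Leopoldt's reflection theorem: the Main Conjecture of Mazur--Wiles determines $\Ch(X_\chii^\#(1))=(\xi_\chii)$ on the minus side, and reflection bounds the $\Lambda$-rank of a plus eigenspace of $X$ by that of a matching minus eigenspace, hence by the number of zeros of a branch of the $p$-adic $L$-function. But this gives only an \emph{upper} bound on the size of $X^+$, never its finiteness: many of the relevant $p$-adic $L$-functions provably have zeros --- already for irregular pairs in the case $N=1$, where the conjecture sits below Conjecture \ref{kummer-vandiver} --- while $\lambda^+$ is still conjecturally $0$, so the inequality from reflection is genuinely lossy. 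A second route is the one implicit in this paper: by Theorem \ref{main} and the Fukaya--Kato results recalled in the introduction, finiteness of pieces of $X_\theta$ is equivalent to weak Gorenstein-ness of $\fH$ together with Sharifi-type statements, so a direct proof of those ring-theoretic properties of the Eisenstein Hecke algebra would yield cases of Greenberg's conjecture --- but establishing \st{C($\fH$ I)} unconditionally is exactly as hard. In the trivial direction the conjecture holds whenever $X_\theta=0$, the situation exploited in Lemma \ref{van lem}, and more generally whenever one can verify finiteness by explicit computation (as in the tables of Kraft--Schoof and their successors); every known case is of one of these two kinds.

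The main obstacle is structural: the deep machinery available --- the Main Conjecture, cyclotomic-unit and Kato Euler systems, Kolyvagin-type derivative arguments --- produces divisibilities, i.e. \emph{upper} bounds on the minus side of class groups with lower bounds only against an analytically defined target, whereas finiteness of $X^+$ is a genuine plus-side smallness statement for which no $L$-value interpretation and no Euler system are known. Until such an input is found, a proof in general is out of reach, and, as with the other conjectures catalogued in Section \ref{relation}, the role of the present paper is to record equivalences and conditional implications rather than to resolve it.
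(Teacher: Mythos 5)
The statement is Greenberg's conjecture, which the paper records as an open conjecture (citing \cite{greenberg}) and does not prove --- no proof is known, and the paper's own contribution is only the network of conditional equivalences you describe. You correctly identify this, and your survey of the situation (Ferrero--Washington giving $\mu=0$ so that the content is $\lambda^+=0$, the lossiness of reflection-theorem bounds, the absence of a plus-side Euler system, and the fact that Theorem \ref{main} converts the problem into the equally open weak Gorenstein property of $\fH$) is accurate; there is nothing to compare against, since the paper offers no proof either.
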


The following lemma may be proved in the same manner as Lemma \ref{van lem}.

\begin{lem}\label{green lem}
The following are equivalent:
\abcs
\item $X_\theta$ is finite.
\item The map $\Lambda/\xi_\chii \otimes_{\Z_p} \Q_p \to \X_\theta \otimes_{\Z_p} \Q_p$ induced by the map in (\ref{euxxc2}) is an isomorphism.
\endabcs
\end{lem}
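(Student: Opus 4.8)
The plan is to mimic the proof of Lemma \ref{van lem}, replacing ``$X_\theta = 0$'' with ``$X_\theta$ finite'' and ``cyclic'' with ``cyclic after $\otimes_{\Z_p}\Q_p$'' throughout. The starting point is the exact sequence (\ref{euxxc2}). Tensoring with $\Q_p$ is exact, so we get an exact sequence
$$
(\Lambda/\xi_\chii)\otimes_{\Z_p}\Q_p \to \X_\theta\otimes_{\Z_p}\Q_p \to X_\theta\otimes_{\Z_p}\Q_p \to 0.
$$
Now $X_\theta$ is a finitely generated $\Z_p$-module (being a finitely generated torsion $\Lambda$-module with no finite submodule only after... — actually one uses that $X_\theta$ is finite precisely when $X_\theta\otimes_{\Z_p}\Q_p = 0$): thus $X_\theta$ is finite if and only if $X_\theta\otimes_{\Z_p}\Q_p = 0$, which by the displayed exact sequence is equivalent to the leftmost map being surjective. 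Since the source $(\Lambda/\xi_\chii)\otimes\Q_p$ is a cyclic $\Lambda\otimes\Q_p$-module, its image in $\X_\theta\otimes\Q_p$ is cyclic; so surjectivity would force $\X_\theta\otimes\Q_p$ to be cyclic, but that is a property we already know (or can deduce): by Proposition \ref{X and X prop}, $\X_\theta \cong \E^1(X_\chii(-1))$ has projective dimension $\le 1$ as a $\Lambda$-module, and $\Lambda\otimes\Q_p$ is a principal ideal domain (a localization of the two-dimensional regular ring $\Lambda$ whose maximal ideals become the height-one primes), so $\X_\theta\otimes\Q_p$ is automatically cyclic — hence surjectivity of the leftmost map is equivalent to it being an isomorphism.

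Concretely: I would first record that $\X_\theta\otimes_{\Z_p}\Q_p$ is a cyclic $\Lambda\otimes_{\Z_p}\Q_p$-module. For this, note $\Lambda\otimes_{\Z_p}\Q_p$ is a PID (one-dimensional regular), and $\X_\theta$ has projective dimension $1$ by \cite[Proposition 5.5.3 (iv), pg. 313]{N-S-W} applied via Proposition \ref{X and X prop}; a finitely generated torsion module over a PID with a length-$1$ projective resolution is cyclic. Second, I would observe that the leftmost map in (\ref{euxxc2}), call it $f$, has source $\Lambda/\xi_\chii$, which is cyclic, so $\mathrm{im}(f)$ is a cyclic submodule. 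Since both $\mathrm{im}(f)\otimes\Q_p$ and $\X_\theta\otimes\Q_p$ are cyclic torsion modules over the PID $\Lambda\otimes\Q_p$, and they have the same characteristic ideal — indeed $\Ch(\X_\theta)=(\xi_\chii)=\Ch(\Lambda/\xi_\chii)$ and the cokernel $X_\theta\otimes\Q_p$ is the obstruction — one sees $f\otimes\Q_p$ is surjective iff it is an isomorphism iff $X_\theta\otimes\Q_p = 0$. Third, I would invoke the fact (already noted after (\ref{euxxc2})) that $X_\theta$ is finite iff the leftmost arrow is injective; combined with the characteristic-ideal equality, finiteness of $X_\theta$ is then seen to be equivalent to $X_\theta\otimes\Q_p = 0$, which closes the circle.

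I expect the main subtlety to be bookkeeping the twists and the involution $\iota$ correctly when transporting between $X_\chii$, $X_\chii(-1)$, and $\X_\theta$ via Proposition \ref{X and X prop}, and making sure the cyclicity of $\X_\theta\otimes\Q_p$ is genuinely unconditional (it is, since it rests only on $\mathrm{pd}_\Lambda \X_\theta = 1$, which holds because $\X_\theta$ has no finite submodule, not on any conjecture). The rest is formal: exactness of $-\otimes_{\Z_p}\Q_p$, the classification of finitely generated modules over the PID $\Lambda\otimes_{\Z_p}\Q_p$, and the already-established dictionary ``$X_\theta$ finite $\iff$ leftmost map in (\ref{euxxc2}) injective''. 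Since the lemma only asserts an equivalence ``after $\otimes\Q_p$'', no delicate finiteness-of-torsion argument beyond what precedes (\ref{euxxc2}) is needed.
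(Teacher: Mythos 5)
Your overall strategy --- tensor (\ref{euxxc2}) with $\Q_p$, identify surjectivity of the resulting map with the vanishing of $X_\theta \otimes_{\Z_p} \Q_p$, and upgrade surjectivity to bijectivity via the equality $\Ch(\X_\theta) = (\xi_\chii) = \Ch(\Lambda/\xi_\chii)$ and a length count over the principal ideal domain $\Lambda \otimes_{\Z_p}\Q_p$ --- is the right one, and is what the paper means by ``the same manner as Lemma \ref{van lem}''. But two points need repair.

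The genuine gap is the equivalence ``$X_\theta$ is finite $\iff X_\theta \otimes_{\Z_p}\Q_p = 0$'', which you need for the direction (2)$\implies$(1) and whose justification in your write-up is circular (you cite the equivalence itself as the reason it holds). For a finitely generated torsion $\Lambda$-module $M$, the vanishing of $M\otimes_{\Z_p}\Q_p$ only says that $M$ is killed by a power of $p$; the module $\Lambda/p$ is infinite yet satisfies $\Lambda/p\otimes_{\Z_p}\Q_p=0$. What closes the gap is the Ferrero--Washington theorem \cite{ferrero-washington}: the $\mu$-invariant vanishes, so $X_\theta$ is a finitely generated $\Z_p$-module (equivalently, $p\nmid \xi_\chii$, so $\Lambda/\xi_\chii$ has no $p$-torsion, injectivity of the map after $\otimes_{\Z_p}\Q_p$ forces injectivity of the map itself, and finiteness of $X_\theta$ follows from the remark after (\ref{euxxc2})). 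This input must be invoked explicitly; without it the implication (2)$\implies$(1) is simply false as a statement about abstract $\Lambda$-modules.

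Separately, your claim that $\X_\theta\otimes_{\Z_p}\Q_p$ is ``automatically cyclic'' because $\X_\theta$ has projective dimension $1$ is wrong: over a PID every finitely generated torsion module has projective dimension at most $1$, so your argument would prove every such module cyclic --- and the cyclicity of $\X_\theta\otimes_{\Z_p}\Q_p$ is precisely Greenberg's conjecture \st{C(Cyc I)}, which the paper lists as open. (In Lemma \ref{van lem} the input is cyclicity \emph{together with} projective dimension $1$, which yields cyclicity of the adjoint; projective dimension $1$ alone gives nothing.) Fortunately this claim carries no weight in your proof: the characteristic-ideal/length comparison already shows that a surjection $(\Lambda/\xi_\chii)\otimes_{\Z_p}\Q_p \to \X_\theta\otimes_{\Z_p}\Q_p$ must be an isomorphism, and cyclicity of the target is then a consequence of surjectivity rather than a prerequisite for anything. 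Delete the projective-dimension argument and add Ferrero--Washington, and the proof is complete.
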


\section{Hecke algebras and modular forms} 

In this section, we introduce Hecke algebras for modular forms to the story. 

\subsection{Hecke algebras and Eisenstein ideals} \label{hecke def} We introduce the objects from the theory of modular forms that we will need. See \cite{wake2} for a more detailed treatment of this theory.

\subsubsection{Modular curves and Hecke operators} 	Let $Y_1(Np^r) /\Q$ be the moduli space for pairs $(E,P)$, where $E/\Q$ is an elliptic curve and $P \in E$ is a point of order $Np^r$. Let $X_1(Np^r)/\Q$ be the compactification of $Y_1(Np^r)$ by adding cusps. 

There is an action of $(\Z/Np^r\Z)^\times$ on $Y_1(Np^r)$ where $a \in (\Z/Np^r\Z)^\times$ acts by $a(E,P)= (E,aP)$. This is called the action of diamond operators $\dia{a}$. There are also Hecke correspondences $T^*(n)$ on $Y_1(Np^r)$ and $X_1(Np^r)$. We consider these as endomorphisms of the cohomology.

We define the Hecke algebra of $Y_1(Np^r)$ to be the algebra generated by the $T^*(n)$ for all integers $n$ and the $\dia{a}$ for all $a \in (\Z/Np^r\Z)^\times$. We define the Eisenstein ideal to be the ideal of the Hecke algebra generated by $1-T^*(l)$ for all primes $l | Np$ and by $1-T^*(l)+l\dia{l}^{-1}$ for all primes $l \nmid Np$.
\subsubsection{Ordinary cohomology}
Let $$
H'=\varprojlim H^1(\overline{X}_1(Np^r),\Z_p)^{\mathrm{ord}}_{\theta}$$
and
$$
\tH'= \varprojlim H^1(\overline{Y}_1(Np^r),\Z_p)^{\mathrm{ord}}_{\theta},$$
where the superscript ``$\mathrm{ord}$'' denotes the ordinary part for the dual Hecke operator $T^*(p)$, and the subscript refers to the eigenspace for the diamond operators.

\subsubsection{Eisenstein parts} Let $\h'$ (resp. $\fH'$) be the algebra of dual Hecke operators acting on $H'$ (resp. $\tH'$). Let $I$ (resp. $\I$) be the Eisenstein ideal of $\h'$ (resp. $\fH'$).  Let $\fH$ denote the {\em Eisenstein component} $\fH = \fH'_\mathfrak{m}$, the localization at the unique  maximal ideal $\mathfrak{m}$ containing $\I$. We can define the Eisenstein component $\h$  of $\h'$ analogously. Let $\tH=\tH' \otimes_{\fH'} \fH$ and $H=H' \otimes_{\h'} \h$ be the Eisenstein components.

There is a natural surjection $\fH \onto \h$ by restriction. Let $I_\fH \subset \fH$ be the kernel of the composite map $\fH \onto \h \onto \h/I$. Note that $\I \subsetneq
I_\fH$.

\subsection{Properties of the Hecke modules} We first recall some properties of the Hecke modules $\tH$ and $H$ and Hecke algebras $\fH$ and $\h$. See \cite[Section 6]{kato-fukaya} for a simple and self-contained exposition of this. 

\subsubsection{Control theorem} There are natural maps $\Lambda \to \h$ and $\Lambda \to \fH$ given by diamond operators. It is a theorem of Hida that these maps are finite and flat. In particular, $\h$ and $\fH$ are noetherian local rings of dimension 2 with (the same) finite residue field. 

Let $\h^\vee$ (resp. $\fH^\vee$) denote the $\h$-module (resp. $\fH$-module) $\Hom_\Lambda(\h,\Lambda)$  (resp. $\Hom_\Lambda(\fH,\Lambda)$). We will call these the {\em dualizing modules} for the respective algebras.

\subsubsection{Eichler-Shimura isomorphisms} Ohta (\cite[Section 4.2]{comp2}, for example) has proven theorems on the Hecke module structure of $\tH$ and $H$. See \cite{wake2} for a different approach. The main result we need is the following, which appears in this form in \cite[Section 6.3]{kato-fukaya}.

\begin{thm}\label{e-s}
There are isomorphisms of Hecke-modules $H^+ \cong \h$, $H^- \cong \h^\vee$ and $\tH^- \cong \fH^\vee$.
\end{thm}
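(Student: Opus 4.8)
The plan is to deduce the theorem from two ingredients: a formal duality step, based on Poincar\'e duality on the modular curves and the action of complex conjugation, which reduces the ``$-$'' isomorphisms to the corresponding ``$+$'' statements; and the $\Lambda$-adic Eichler--Shimura isomorphism, which supplies the ``$+$'' statements. (The theorem is Ohta's, so the second ingredient will ultimately be a citation; I sketch its content below.)

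For the duality step, write $H^1_r=H^1(\overline{X}_1(Np^r),\Z_p)$ and consider the cup product $H^1_r\times H^1_r\to H^2(\overline{X}_1(Np^r),\Z_p)\cong\Z_p(-1)$. It is perfect, and under it $T^*(n)$ is self-adjoint up to a twist by diamond operators. The diamond operators, being automorphisms of a connected curve, act trivially on $H^2$, so the cup product pairs the $\theta$-eigenspace perfectly against the $\theta^{-1}$-eigenspace; and since complex conjugation (acting through $G_\Q$) commutes with the diamond operators and acts by $-1$ on $\Z_p(-1)$, this pairing matches the $+$-part of the $\theta$-eigenspace with the $-$-part of the $\theta^{-1}$-eigenspace, and vice versa. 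Reinterpreting the $\theta^{-1}$-eigenspace as $M\mapsto M^\#$ applied to a twist of the $\theta$-eigenspace (this is the role of the operators $\iota,\tau$ of Section \ref{cyc field section}), taking the inverse limit over $r$, and localizing at the Eisenstein maximal ideal, one obtains a perfect $\Lambda$-bilinear pairing identifying $H^-$ with $\Hom_\Lambda(H^+,\Lambda)$, up to an application of $M\mapsto M^\#$ and a Tate twist. For $\tH$ one cannot use the cup product on the open curve $\overline{Y}_1(Np^r)$, since $H^2$ of an affine curve over an algebraically closed field vanishes; instead one uses the perfect Poincar\'e pairing $H^1(\overline{Y}_1(Np^r),\Z_p)\times H^1_c(\overline{Y}_1(Np^r),\Z_p)\to\Z_p(-1)$ together with the excision sequences relating these two groups to $H^1(\overline{X}_1(Np^r),\Z_p)$ and to divisor groups supported at the cusps. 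The same argument then identifies $\tH^-$, in the limit on the Eisenstein component, with $\Hom_\Lambda(V,\Lambda)$ up to twist, where $V$ is the ``$+$'' part of $\varprojlim H^1_c(\overline{Y}_1(Np^r),\Z_p)^{\mathrm{ord}}_\theta$.

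For the ``$+$'' step, I would invoke the classical Eichler--Shimura isomorphism, which identifies $H^1$ of a modular curve, after inverting $p$, with a sum of spaces of weight-two modular forms compatibly with Hecke operators: cusp forms in the case of $\overline{X}$, while for $H^1_c$ of $\overline{Y}$ the full space appears, the weight-two Eisenstein series accounting for the extra dimension coming from the cusp divisors. Hida's ordinary theory makes this integral and $\Lambda$-adic on the ordinary $\theta$-part: the relevant space of $\Lambda$-adic ordinary forms is free of rank one over its Hecke algebra, because that algebra acts faithfully and the pairing $(f,T)\mapsto a_1(Tf)\in\Lambda$ is perfect. This gives $H^+\cong\h$ and $V\cong\fH$, the passage from $\h$ to $\fH$ being exactly the contribution of the cusp-divisor term in the excision sequence on the Eisenstein component. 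This part is Ohta's theorem (\cite[Section 4.2]{comp2}; see also \cite[Section 6.3]{kato-fukaya}).

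Combining the two steps yields $H^-\cong\Hom_\Lambda(\h,\Lambda)=\h^\vee$ and $\tH^-\cong\Hom_\Lambda(\fH,\Lambda)=\fH^\vee$, provided one checks that in the normalization of Section \ref{hecke def} the twists introduced by the duality (the functor $M\mapsto M^\#$ and the Tate twists) cancel and leave no residual twist. I expect the main obstacle to be the ``$+$'' step: the assertion that $H^+$ and $V$ are \emph{free of rank one} is not formal, as it requires Hida's control and freeness theorems together with the integrality of the Eichler--Shimura comparison in the ordinary setting (one must avoid the transcendental periods present in the statement over $\C$). A secondary, purely bookkeeping difficulty is tracking the Tate twists and the operators $\iota,\tau$ through Poincar\'e duality so as to land precisely on $\h^\vee$ and $\fH^\vee$ as defined.
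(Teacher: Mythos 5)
The paper offers no proof of this statement: it is quoted verbatim as a theorem of Ohta, in the form given in \cite[Section 6.3]{kato-fukaya} (with \cite{wake2} cited for an alternative approach). So there is no internal argument to compare against, and your two-step outline --- Poincar\'e duality to pass between the plus and minus parts, and the $\Lambda$-adic Eichler--Shimura/Hida theory for the remaining identification --- is indeed the shape of the cited proof.

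That said, the one place where you supply an actual argument rather than a citation contains a genuine error. You justify $H^+\cong\h$ by asserting that the space of $\Lambda$-adic ordinary cusp forms is free of rank one over $\h$ ``because the algebra acts faithfully and the pairing $(f,T)\mapsto a_1(Tf)$ is perfect.'' Perfectness of that pairing identifies the space of forms with $\Hom_\Lambda(\h,\Lambda)=\h^\vee$, not with $\h$; faithfulness adds nothing. The further claim that $\h^\vee$ is $\h$-free of rank one is exactly the statement that $\h$ is Gorenstein --- the property this entire paper is about, and one that fails in general (cf.\ the Kurihara/Harder--Pink/Ohta results quoted in Section \ref{h section}). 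So your ``$+$'' step, as justified, is circular precisely in the cases of interest. In the actual proof, $H^+$ is identified not with the space of cusp forms but with the unramified quotient $H_{\mathrm{quo}}$ of the ordinary filtration $0\to H_{\mathrm{sub}}\to H\to H_{\mathrm{quo}}\to 0$; it is $H_{\mathrm{sub}}$ (equivalently $H^-$) that Eichler--Shimura identifies with the module of $\Lambda$-adic cusp forms $\cong\h^\vee$, and $H_{\mathrm{quo}}\cong\h$ then follows from the self-duality of $H$ (or from the unit-root splitting together with the $q$-expansion principle); that $H^+\to H_{\mathrm{quo}}$ is an isomorphism on the Eisenstein component is itself a nontrivial theorem of Ohta. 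A secondary inaccuracy: the cup product pairs the $\theta$-eigenspace against the $\theta^{-1}$-eigenspace, which is a genuinely different piece of cohomology, and the functor $M\mapsto M^\#$ does not convert one into the other; one needs the Atkin--Lehner involution, as in Ohta's construction, to obtain a perfect twisted pairing on the $\theta$-eigenspace itself.
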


\subsubsection{Boundary at the cusps} The cokernel of the natural map $H \to \tH$ is described as the boundary at cusps. Ohta (\cite[Theorem 1.5.5]{cong}) has shown that the module of cusps is free of rank one as a $\Lambda$-module. That is, there is an exact sequence of $\fH$-modules
$$
0 \to H \to \tH \to \Lambda \to 0.
$$
Moreover, there is a canonical element $\zinf \in \tH$ that gives a generator of $\tH/H$. This is proven in \cite[Lemma 4.8]{sharifi}, following \cite[Theorem 2.3.6]{cong} (cf. \cite[Section 6.2.5]{kato-fukaya}).

\subsubsection{Relation between Hecke and Iwasawa algebras} The following is a consequence of the Iwasawa Main Conjecture. See \cite[Section 2.5.3]{fks} for a nice explanation.

\begin{prop}
The natural inclusions $\Lambda \to \fH$ and $\Lambda \to \h$ induce isomorphisms
$$
\Lambda \isoto \fH/\I
$$
and
$$
\Lambda/\xi_\chi \isoto \h/I.
$$
\end{prop}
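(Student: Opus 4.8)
The plan is to establish the two isomorphisms separately; the first is essentially formal, while the second rests on the Iwasawa Main Conjecture.

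\emph{The isomorphism $\Lambda \isoto \fH/\I$.} For surjectivity, recall that $\fH$ is generated over the image of $\Lambda$ (which contains all diamond operators $\dia{a}$) by the dual Hecke operators $T^*(l)$ for primes $l$, since a general $T^*(n)$ is built from these by the usual recursions. By the defining generators of $\I$ one has $T^*(l) \equiv 1 \pmod{\I}$ for $l \mid Np$ and $T^*(l) \equiv 1 + l\dia{l}^{-1} \pmod{\I}$ for $l \nmid Np$; in either case $T^*(l)$ lies in the image of $\Lambda$ modulo $\I$, so $\Lambda \to \fH/\I$ is surjective. For injectivity, the ordinary $\Lambda$-adic Eisenstein series attached to $\theta$ is a normalized eigenform lying in the Eisenstein component, and its system of Hecke eigenvalues defines a $\Lambda$-algebra homomorphism $\lambda \colon \fH \to \Lambda$ (which factors through $\fH = \fH'_\m$ because $\ker\lambda \supseteq \I$ forces $\ker\lambda \subseteq \m$). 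One has $\I \subseteq \ker\lambda$ from the Hecke eigenvalues of this Eisenstein series, and $\lambda$ restricts to the identity on $\Lambda$ since the diamond operators act on the Eisenstein family through the structure map. Hence $\lambda$ splits the map $\Lambda \to \fH/\I$, which is therefore an isomorphism; in particular $\ker\lambda = \I$.

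\emph{The isomorphism $\Lambda/\xi_\chi \isoto \h/I$.} First I would reduce to an identification of ideals. The surjection $\fH \onto \h$ together with the definition $I_\fH = \ker(\fH \to \h/I)$ gives $\h/I \cong \fH/I_\fH$, which by the first isomorphism (using $\I \subseteq I_\fH$) is a cyclic $\Lambda$-module $\Lambda/\mathfrak a$, where $\mathfrak a$ is the image of $I_\fH$ in $\fH/\I = \Lambda$. To see that $\mathfrak a$ is principal, take minus parts of the exact sequence $0 \to H \to \tH \to \Lambda \to 0$ from the subsection on the boundary at cusps: the generating cusp $\zinf$ lies in $\tH^-$, so this becomes $0 \to H^- \to \tH^- \to \Lambda \to 0$, and the Eichler--Shimura isomorphisms $H^- \cong \h^\vee$, $\tH^- \cong \fH^\vee$ turn it into a short exact sequence of finite free $\Lambda$-modules $0 \to \h^\vee \to \fH^\vee \to \Lambda \to 0$. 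Applying $\Hom_\Lambda(-,\Lambda)$, and using $\Ext^1_\Lambda(\Lambda,\Lambda)=0$ together with the reflexivity of finite free $\Lambda$-modules, yields $0 \to \Lambda \to \fH \to \h \to 0$ with $\fH \to \h$ the natural restriction map. Thus $J := \ker(\fH \onto \h)$ is free of rank one over $\Lambda$; since the image of $\I$ in $\h$ is $I$ and $J \subseteq I_\fH$, a short argument gives $I_\fH = \I + J$, whence $\mathfrak a$ is the image of $J$ in $\fH/\I = \Lambda$, a principal ideal $(c)$. Therefore $\h/I \cong \Lambda/(c)$.

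\emph{The identification $(c) = (\xi_\chi)$.} This is the crux, and it amounts to a form of the Iwasawa Main Conjecture: the congruence ideal $(c)$, which governs congruences between the $\Lambda$-adic Eisenstein series and $\Lambda$-adic cusp forms, must coincide with $\Ch(X_\chi(1)) = (\xi_\chi)$. One containment comes from producing cusp forms congruent to the Eisenstein series along the divisor of its constant term, which is a unit multiple of $\xi_\chi$; the reverse is the deep divisibility of Mazur--Wiles \cite{mazur-wiles}. Everything preceding this step is formal bookkeeping with the structural results recalled above, so I expect this last identification to be the main obstacle. Alternatively, one may simply invoke the Main Conjecture directly, as in the exposition of \cite[Section 2.5.3]{fks}.
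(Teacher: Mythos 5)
Your argument is correct and ends up in the same place as the paper, which offers no proof at all beyond the remark that the statement is a consequence of the Iwasawa Main Conjecture and a citation of \cite[Section 2.5.3]{fks} — precisely the input you isolate as the crux in identifying $(c)=(\xi_\chi)$. The formal reductions you supply (the Eisenstein eigenvalue splitting $\lambda\colon\fH\to\Lambda$ giving $\Lambda\isoto\fH/\I$, and the dualized boundary sequence showing $\ker(\fH\onto\h)$ is free of rank one, so that $\h/I$ is a cyclic $\Lambda$-module with principal annihilator) are sound and standard.
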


\subsubsection{Drinfeld-Manin modification} Let $\HDM = \tH \otimes_\fH \h$. By the previous two paragraphs, there is an exact sequence of $\h$-modules
$$
0 \to H \to \HDM \to \Lambda/\xi_\chi \to 0.
$$
By abuse of notation, we let $\zinf \in \HDM$ be the image of $\zinf \in \tH$.

\section{Sharifi's conjecture}\label{sharifi section} 
 In this section, we will discuss some remarkable conjectures that were formulated by Sharifi \cite{sharifi}. Sharifi gave a conjectural construction of a map
$$
\varpi: H^-/IH^- \to X_\chi(1)
$$
and constructed a map
$$
\U : X_\chi(1) \to H^-/IH^-.
$$

\begin{conj}[Sharifi]\label{sharifi conj}
The maps $\U$ and $\varpi$ are inverse isomorphisms.
\end{conj}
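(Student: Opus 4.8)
We cannot prove Conjecture \ref{sharifi conj}, but here is the line of attack we have in mind. The plan is to deduce it from two separate assertions: that the composite $\varpi\circ\U$ is the identity on $X_\chi(1)$, and that $\U$ is surjective. Granting both, $\U$ has a left inverse and is onto, hence is an isomorphism, so $\varpi=\U^{-1}$ automatically and $\U,\varpi$ are inverse isomorphisms. The identity $\varpi\circ\U=\mathrm{id}$ is the part one expects to extract from Sharifi's explicit constructions: $\U$ is built from Steinberg symbols (Beilinson--Kato elements) in the $K$-theory of the tower of modular curves $X_1(Np^r)$, while $\varpi$ comes from a Hecke-equivariant map out of the minus part $H^-$ of the ordinary cohomology; one would compute the composite directly on these cocycles, using compatibility of cup products with the norm maps up the cyclotomic tower, as Sharifi and Fukaya--Kato do. So the real content is the surjectivity of $\U$.

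First I would move surjectivity onto the Hecke side. By Theorem \ref{e-s}, $H^-\cong\h^\vee$, hence $H^-/IH^-\cong\h^\vee/I\h^\vee$, and by the isomorphism $\h/I\cong\Lambda/\xi_\chi$ recalled above, surjectivity of $\U$ becomes the statement that $\h^\vee/I\h^\vee$ is generated over $\Lambda/\xi_\chi$ by the single canonical class attached to $\zinf$. If $\fH$ (equivalently, here, $\h$) were Gorenstein, then $\fH^\vee\cong\fH$, this module is free of rank one over $\h/I$, and one concludes by the argument of Ohta (cf. \cite[Proposition 4.10]{sharifi}, \cite{comp2}). The obstruction to this is precisely $H^-/I\HDM^-$: from the exact sequence $0\to H\to\HDM\to\Lambda/\xi_\chi\to 0$ one reads off that $\U$ is onto exactly when this module is as small as it can be, and by Theorem \ref{application cor} that minimality, $d_{\m_\Lambda}(H^-/I\HDM^-)=d_{\m_\Lambda}(X_\chi)$, is itself equivalent to $\U$ being an isomorphism. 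So this step cannot be purely formal: something external must enter.

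That external input is Iwasawa-theoretic. It is known, via the Main Conjecture of Mazur--Wiles \cite{mazur-wiles} and the structure of $\h$, that $\Ch(H^-/IH^-)=(\xi_\chi)=\Ch(X_\chi(1))$; since $X_\chi(1)$ has no nonzero finite submodule (Proposition \ref{X and X prop}), the conditions ``$\U$ is an injective pseudo-isomorphism'', ``$\coker(\U)$ is finite'', and \st{C($\U$ I)} all coincide, and this is what Fukaya--Kato \cite[Theorem 7.2.6]{kato-fukaya} establish under the weak Gorenstein hypothesis on $\h$. The genuinely hard step, and the main obstacle, is to promote this pseudo-isomorphism to an honest isomorphism, i.e.\ to kill the finite cokernel; by Theorem \ref{main} this is exactly the gap between \st{C($\U$ I)} holding and $\fH$ being weakly Gorenstein, and it is bridged by the finiteness statement \st{C(Fin I)}, that $X_\theta/\xi_\chi X_\theta$ is finite. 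The stubborn point is the appearance of $\xi_\chi$ rather than the $p$-adic zeta function $\xi_\chii$ that actually annihilates $X_\theta$: there is no evident reason $\xi_\chi$ should interact with $X_\theta$ at all, and supplying that reason — together with a non-Gorenstein enhancement of Ohta's surjectivity argument — is where I expect the difficulty to concentrate.
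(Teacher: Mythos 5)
The statement you are addressing, Conjecture \ref{sharifi conj}, is an open conjecture; the paper offers no proof of it, and neither do you --- your text says so explicitly, which is the honest position. What you have written is a survey of the reductions surrounding the conjecture rather than an argument, so the only thing to assess is whether that survey matches the paper's own account of what is known. Broadly it does: the splitting into ``$\varpi\circ\U=\mathrm{id}$ on $X_\chi(1)$'' plus ``$\U$ surjective'' is logically sound and is essentially the paper's decomposition of \st{C(S. III)} into \st{C(S. II)} plus \st{C($\U$ II)}; the identification of $H^-/I\HDM^-$ as the obstruction to surjectivity, the role of Theorem \ref{application cor} as a minimality criterion, and the equivalence of \st{C($\fH$ I)} with \st{C($\U$ I)} $+$ \st{C(Fin I)} from Theorem \ref{main} are all correctly placed.

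Two corrections and one caveat. First, your parenthetical ``$\fH$ (equivalently, here, $\h$) were Gorenstein'' asserts an equivalence the paper nowhere claims: \st{A($\h$ II)} and \st{A($\fH$ II)} are distinct conditions with different consequences (Lemma \ref{h lemma 1} versus Theorem \ref{H gor implies U iso thm}), and neither is known to imply the other. Second, the first half of your plan, $\varpi\circ\U=\mathrm{id}$, is itself conjectural (Conjecture \ref{weak sharifi conj}); it is established only under hypotheses such as $\xi_\chi$ having no multiple roots (Theorem \ref{fk main thm}) or $\h$ weakly Gorenstein (Theorem \ref{fk h weak thm}), so it cannot be treated as the ``routine'' half. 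The caveat is the genuine gap, which you locate correctly but do not close: nothing in the paper or the literature it cites kills the finite cokernel of $\U$ when neither $\h$ nor $\fH$ is Gorenstein --- the introduction states that no evidence for \st{C($\U$ II)} beyond \st{C($\U$ I)} is known in that regime, and Theorem \ref{application cor} is offered only as a way to test it computationally in examples. Your proposal is therefore a correct map of the terrain, with the two slips noted, but it is not a proof and should not be presented as progress toward one beyond what the paper already records.
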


We refer to \cite{sharifi} and \cite{kato-fukaya} for the original constructions, and \cite{fks} for a nice survey of the known results. There is also the following weaker version, which appears as \cite[Conjecture 7.1.2]{kato-fukaya}. It allows for the possibility that the $p$-torsion part $(tor)$ of $H^-/IH^-$ is non-zero (note that Conjecture \ref{sharifi conj} implies that $(tor)=0$, and that Sharifi specifically remarks this \cite[Remark, pg. 51]{sharifi}).

\begin{conj}\label{weak sharifi conj}
The maps $\U$ and $\varpi$ are inverses up to torsion. That is, $\U \circ \varpi$ is the identity map on $(H^-/IH^-)/(tor)$ and $\varpi \circ \U$ is the identity map on $X_\chi(1)$.
\end{conj}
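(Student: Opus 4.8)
The plan is not to reconstruct the maps — $\U$ and $\varpi$ are already in hand, built by Sharifi from the Galois action on $H^-$ and from cup products in the \'etale cohomology of modular curves, respectively — so the entire content of the conjecture is the comparison of the two composites with the identity. The organizing principle is that $\U$ and $\varpi$ are defined $\Lambda$-adically over the whole Hida family, so each composite is a $\Lambda$-linear endomorphism of a finitely generated torsion $\Lambda$-module and is therefore pinned down, up to a controlled error, by its specializations at the arithmetic primes.

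First I would treat $\varpi \circ \U$ as an endomorphism of $X_\chi(1)$; here there is no torsion issue, since $X_\chi$ has no $p$-torsion. Specializing $T$ to an arithmetic prime (weight $k \ge 2$) and invoking Kato's Euler system together with the explicit reciprocity law underlying the Eichler--Shimura isomorphisms of Theorem~\ref{e-s}, one reduces $\varpi \circ \U$ at that weight to a compatibility between Beilinson--Kato elements and cup products on a single modular curve, where it is the identity essentially by construction. Since the arithmetic primes are Zariski-dense in $\Spec(\Lambda)$ and $X_\chi(1)$ has no nonzero finite submodule (Proposition~\ref{X and X prop}, together with $\Ch(X_\chi(1))=(\xi_\chi)$ from the Main Conjecture), the endomorphism $\varpi\circ\U - \mathrm{id}$ must vanish. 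I expect this direction to need no hypothesis on $\xi_\chi$; as a byproduct it makes $\U$ split injective and $\varpi$ surjective.

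The harder direction is $\U \circ \varpi = \mathrm{id}$ on $(H^-/IH^-)/(tor)$, and it is exactly here that the failure of $\fH$ to be Gorenstein causes trouble: $H^-/IH^-$ need not be a cyclic $\Lambda/\xi_\chi$-module, its torsion submodule $(tor)$ need not vanish, and a map that is ``generically the identity'' can a priori be rescaled by a unit along each component of its support. The step I would take is to impose the squarefreeness hypothesis \st{Q($\xi$)}: then at each height-one prime of $\Spec(\Lambda)$ dividing $\xi_\chi$ the quotient $\Lambda/\xi_\chi$ localizes to a field, the corresponding Eisenstein localization of $\h$ is Gorenstein, and $(H^-/IH^-)/(tor)$ is consequently free of rank one over $\Lambda/\xi_\chi$ there (via $H^-\cong\h^\vee$ from Theorem~\ref{e-s} and $\h/I\cong\Lambda/\xi_\chi$); the unit ambiguity in $\U\circ\varpi$ is then eliminated by compatibility with the specialization computation of the previous paragraph. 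This is, in outline, the Fukaya--Kato argument for the implication \st{Q($\xi$)} $\Rightarrow$ \st{C(S. II)}.

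The main obstacle is removing \st{Q($\xi$)}. Along a prime factor of $\xi_\chi$ of multiplicity $>1$ the module $H^-/IH^-$ is genuinely non-cyclic, so $\U\circ\varpi$ can no longer be controlled by a rank-one argument; pinning it down there seems to require the weak Gorenstein property \st{C($\fH$ I)} and a precise analysis of the defect module $H^-/I\HDM^-$ — precisely the objects of this paper — rather than any soft specialization argument. This is why the present paper settles for relating \st{C($\fH$ I)} to \st{C($\U$ I)} and \st{C(Fin I)} rather than proving a Sharifi-type statement directly.
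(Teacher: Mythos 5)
The statement you are trying to prove is labelled \texttt{conj} in the paper for a reason: it is an open conjecture (a restatement of \cite[Conjecture 7.1.2]{kato-fukaya}), and the paper offers no proof of it. The only thing the paper records in its direction is Theorem~\ref{fk main thm}, namely the Fukaya--Kato result that the conjecture holds \emph{under the hypothesis} that $\xi_\chi$ has no multiple roots. Your proposal, read closely, is not a proof either: for the composite $\U\circ\varpi$ you explicitly invoke the squarefreeness hypothesis \st{Q($\xi$)} and then concede that removing it is an unresolved obstacle. So at best you have sketched the known conditional result, not established the conjecture. That is the honest state of the art, but it should be presented as such, not as a proof attempt.

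There is also a concrete gap in the direction you claim is unconditional. You argue that $\varpi\circ\U-\mathrm{id}$ vanishes on $X_\chi(1)$ because the arithmetic primes are Zariski-dense in $\Spec(\Lambda)$. But $X_\chi(1)$ is a finitely generated \emph{torsion} $\Lambda$-module: its support is contained in the finitely many height-one primes dividing $\xi_\chi$, none of which need be arithmetic. An endomorphism of such a module is in no way determined by its specializations at primes outside its support, so density of arithmetic primes in $\Spec(\Lambda)$ gives you nothing; the same difficulty you correctly identify for $\U\circ\varpi$ (a ``generically identity'' map can be rescaled by units along each component of the support, and worse at primes occurring with multiplicity) is already present for $\varpi\circ\U$. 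The absence of nonzero finite submodules rules out errors supported at $\m_\Lambda$ only, not errors supported at height-one primes dividing $\xi_\chi$. If you want to salvage the argument, you must work at exactly those primes, which is where the Fukaya--Kato reciprocity computations and the Gorenstein/weak Gorenstein dichotomy discussed in Section~\ref{hecke section} enter, and where the hypothesis on $\xi_\chi$ currently cannot be dispensed with.
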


The following is \cite[Theorem 7.2.6 (2)]{kato-fukaya}. This result is not needed in the remainder of the paper, except to say that Conjecture \ref{weak sharifi conj} holds in every known example.

\begin{thm}[Fukaya-Kato]\label{fk main thm}
If $\xi_\chi$ has no multiple roots, then Conjecture \ref{weak sharifi conj} is true. In particular, if  $\xi_\chi$ has no multiple roots and $H^-/IH^-$ has no non-zero finite submodule, then Conjecture \ref{sharifi conj} is true.
\end{thm}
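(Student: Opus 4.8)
The statement has two parts, and my plan is to obtain the second as a short consequence of the first. The first part is the substantive input, and for it I would follow Fukaya--Kato \cite{kato-fukaya}. The construction is to produce the map $\varpi\colon H^-/IH^- \to X_\chi(1)$ -- whose existence Sharifi only conjectured -- out of Beilinson--Kato zeta elements in the inverse limit of \'etale cohomology groups of the modular tower, using Ohta's Eichler--Shimura isomorphism $H^-\cong \h^\vee$ from Theorem~\ref{e-s} to land in the right module. One then computes the two composites $\varpi\circ\U$ and $\U\circ\varpi$. Sharifi's explicit cup-product description of $\U$ makes $\varpi\circ\U$ visibly the identity on $X_\chi(1)$, essentially formally. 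The composite $\U\circ\varpi$ on $H^-/IH^-$ is identified, via Kato's Euler system and the two-variable Iwasawa Main Conjecture, with multiplication by a ratio of $p$-adic $L$-functions that is a unit at every height-one prime of $\Lambda$ containing $\xi_\chi$.

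This is where the hypothesis enters. Both $X_\chi(1)$ and $H^-/IH^-$ are finitely generated $\Lambda$-modules killed by $\xi_\chi$, with characteristic ideal $(\xi_\chi)$ -- for $X_\chi(1)$ by the definition of $\xi_\chi$, and for $H^-/IH^-$ from Theorem~\ref{e-s} together with the Main Conjecture. If $\xi_\chi$ has no multiple roots, then $\Lambda/\xi_\chi$ is reduced and, for each height-one prime $\p\supset(\xi_\chi)$, one has $\xi_\chi\Lambda_\p=\p\Lambda_\p$, so $\Lambda_\p/\xi_\chi$ is a field and both $(X_\chi(1))_\p$ and $(H^-/IH^-)_\p$ are one-dimensional over it; the $L$-function ratio above then forces the two composites to be the identity on $X_\chi(1)$ and on $(H^-/IH^-)/(tor)$ respectively, which is precisely Conjecture~\ref{weak sharifi conj}. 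The genuinely difficult step, which I would not attempt to reprove, is the Fukaya--Kato construction of $\varpi$ and the computation of the composites: this rests on Kato's Euler system together with the Main Conjecture, and is far from formal.

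Granting the first part, I would prove the second as follows. The key point is that the $p$-power-torsion submodule $(tor)$ of $H^-/IH^-$ is finite. Indeed, $H^-/IH^-$ is a finitely generated $\Lambda$-module (since $H^-\cong\h^\vee$ is finitely generated over $\Lambda$) which is killed by $\xi_\chi$ (it is a module over $\h/I\cong\Lambda/\xi_\chi$); hence so is its submodule $(tor)$, which being finitely generated and $\Z_p$-torsion is moreover killed by some $p^n$. Since the $\mu$-invariant of $X_\chi$ vanishes (Ferrero--Washington), $\xi_\chi$ is not divisible by $p$ in the regular local ring $\Lambda$, so the ideal $(\xi_\chi,p^n)$ has height $2$ and $\Lambda/(\xi_\chi,p^n)$ is a finite ring; therefore $(tor)$, being a finitely generated module over it, is finite. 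Consequently the hypothesis that $H^-/IH^-$ has no non-zero finite submodule forces $(tor)=0$. Plugging this into Conjecture~\ref{weak sharifi conj} (now known by the first part), $\U\circ\varpi$ is the identity on $H^-/IH^-$ and $\varpi\circ\U$ is the identity on $X_\chi(1)$, so $\U$ and $\varpi$ are mutually inverse isomorphisms -- which is Conjecture~\ref{sharifi conj}.
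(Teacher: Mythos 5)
The paper gives no proof of this theorem at all --- it is quoted verbatim from \cite[Theorem 7.2.6 (2)]{kato-fukaya} --- so your deferral to Fukaya--Kato for the first sentence matches the paper's treatment exactly, and your sketch of their strategy (construction of $\varpi$ from zeta elements, computation of the composites via the Main Conjecture, squarefreeness forcing the localizations at height-one primes containing $\xi_\chi$ to be one-dimensional) is consistent with what they do. Your self-contained deduction of the second sentence is correct, and the one point that genuinely needs an argument --- that $(tor)$ is finite, because $H^-/IH^-$ is killed by $\xi_\chi$, which is prime to $p$ by Ferrero--Washington, so a finitely generated submodule killed by both $p^n$ and $\xi_\chi$ is a module over the finite ring $\Lambda/(p^n,\xi_\chi)$ --- is exactly the observation needed to pass from ``no non-zero finite submodule'' to $(tor)=0$.
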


The paper \cite{kato-fukaya} also has results on Sharifi's conjecture when $H^-/IH^- \otimes_{\Z_p} \Q_p$ is generated by one element. This is related to Gorenstein conditions on Hecke algebras, the subject of the next section.

\section{Gorenstein Hecke algebras}\label{hecke section} 

In this section we discuss to what extent the Hecke algebras $\h$ and $\fH$ are Gorenstein. The relevant characterization of Gorenstein is the following.

\begin{defn}
Let $k$ be a regular local ring, and let $k \to R$ be a finite, flat ring homomorphism. Then $R$ is {\em Gorenstein} if $\Hom_k(R,k)$ is a free $R$-module of rank $1$. 
\end{defn}

This definition is seen to be equivalent to the usual one from homological algebra, but is more useful for our purposes. In our applications we will take $k=\Lambda$ and $R=\h ,\ \fH$ or their localizations. Asking whether $\h$ or $\fH$ is Gorenstein is the same as asking whether $\h^\vee$ or $\fH^\vee$  is free of rank $1$. This is relevant in light of Theorem \ref{e-s}. 

\subsection{Conditions on $\h$}\label{h section} We consider under what conditions $\h$ is Gorenstein or weakly Gorenstein.

\subsubsection{Gorenstein} The following lemma is explained in \cite[Section 7.2.12]{kato-fukaya}. 

\begin{lem}\label{h lemma 1}
The following are equivalent:
\abcs
\item $\h$ is Gorenstein.
\item $H^-/IH^-$ is cyclic as an $\h$-module.
\item $H^-/IH^-$ is a free $\h/I$-module of rank 1.
\endabcs
\end{lem}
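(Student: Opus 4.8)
The plan is to establish the cycle of implications $(1)\Rightarrow(3)\Rightarrow(2)\Rightarrow(1)$, translating every statement through the Eichler--Shimura isomorphism $H^-\cong\h^\vee=\Hom_\Lambda(\h,\Lambda)$ of Theorem~\ref{e-s}, and using that $\h$ is finite and flat --- hence, $\Lambda$ being local, finite and free --- over the regular local ring $\Lambda$ (Hida's control theorem). Recall that, by the definition of Gorenstein used in this paper with $k=\Lambda$, condition (1) says exactly that $\h^\vee$ is free of rank $1$ as an $\h$-module.

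For $(1)\Rightarrow(3)$ I would fix an $\h$-linear isomorphism $\h^\vee\isoto\h$ and apply $-\otimes_\h\h/I$: since $H^-/IH^-=H^-\otimes_\h\h/I\cong\h^\vee\otimes_\h\h/I$, this produces an $\h/I$-linear isomorphism $H^-/IH^-\cong\h/I$, which is (3). The implication $(3)\Rightarrow(2)$ is immediate, as a free module of rank $1$ over $\h/I$ is cyclic over $\h/I$, hence over $\h$ through the surjection $\h\onto\h/I$.

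The content is in $(2)\Rightarrow(1)$, via a double application of Nakayama's lemma together with the faithfulness of $\h^\vee$. Since $I\subseteq\m$, the $\h/\m$-vector space $(H^-/IH^-)\otimes_\h\h/\m$ is canonically isomorphic to $H^-/\m H^-$; so if $H^-/IH^-$ is cyclic over $\h$, Nakayama gives $\dim_{\h/\m}(H^-/\m H^-)\le 1$, and a second application of Nakayama to the finitely generated $\h$-module $H^-$ shows that $H^-\cong\h^\vee$ is cyclic over $\h$. There is then a surjection $\h\onto\h^\vee$; its kernel is a $\Lambda$-submodule of $\h$ that vanishes after $\otimes_\Lambda\Frac(\Lambda)$ (comparing $\Lambda$-ranks, which agree since $\h^\vee$ is the $\Lambda$-dual of a finite free module), hence it is $\Lambda$-torsion, hence zero because $\h$ is $\Lambda$-torsion-free. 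Thus $\h^\vee$ is free of rank $1$ over $\h$, i.e.\ $\h$ is Gorenstein. (One may equivalently note that $\h^\vee$ is a faithful $\h$-module, so that the cyclic module $\h^\vee$ must be $\h/\mathrm{Ann}_\h(\h^\vee)=\h$.)

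I do not expect a real obstacle here: once Theorem~\ref{e-s} and the control theorem are granted, the argument is formal. The only points demanding care are keeping track of which isomorphisms are $\h$-linear rather than merely $\Lambda$-linear, and checking that the structural hypotheses on $\h$ (local, finite and flat over $\Lambda$) are precisely what legitimize the Nakayama and rank/faithfulness steps.
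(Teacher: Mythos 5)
Your proposal is correct and follows essentially the same route as the paper, whose proof is just the one-line sketch ``follows from $H^-\cong\h^\vee$, the faithfulness of $\h^\vee$, and Nakayama's lemma''; your write-up is a careful expansion of exactly those three ingredients. The rank/torsion-freeness argument you give for killing the kernel of $\h\onto\h^\vee$ is a valid substitute for (and in effect a proof of) the faithfulness the paper invokes.
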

\begin{proof}
Follows from the fact that $H^- \cong \h^\vee$, that $\h^\vee$ is a faithful $\h$-module, and Nakayama's lemma.
\end{proof}

One may ask whether $\h$ is always Gorenstein. The following result is based on ideas of Kurihara \cite{kurihara} and Harder-Pink \cite{harder-pink}, who prove it in the case $N=1$. The result in this form was proven by Ohta. 

\begin{thm}
Suppose that $\h$ is Gorenstein. Then $X_\chi(1)$ is cyclic as a $\Lambda$-module.
\end{thm}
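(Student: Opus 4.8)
The plan is to combine Lemma~\ref{h lemma 1} with a study of the two-dimensional Galois representation attached to $\h$, in the spirit of the arguments of Kurihara and Harder--Pink for $N=1$ and of Ohta in general.

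First, by Lemma~\ref{h lemma 1} the Gorenstein hypothesis implies that $H^-/IH^-$ is free of rank one over $\h/I$; since $\h/I \cong \Lambda/\xi_\chi$, the module $H^-/IH^-$ is cyclic as a $\Lambda$-module. It therefore suffices to exhibit a surjection of $\Lambda$-modules $H^-/IH^- \onto X_\chi(1)$: then $X_\chi(1)$, being a quotient of a cyclic $\Lambda$-module, is cyclic, and hence so is $X_\chi$.

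To build such a surjection, one uses the continuous representation $\rho \colon G_\Q \to \mathrm{GL}_2(\Frac(\h))$ attached to the Hecke module $H = H^+ \oplus H^-$: it is unramified outside $Np$, ordinary at $p$, and satisfies the usual relations between the characteristic polynomials of Frobenius and the operators $T^*(\ell)$, $\dia{\ell}$. Because $\h$ is Gorenstein, $H^- \cong \h^\vee$ is free over $\h$, so $H$ is free of rank two and $\rho$ is valued in $\mathrm{GL}_2(\h)$. The Eisenstein congruence makes $\rho \bmod I$ reducible: for a suitable basis it is upper triangular, realizing an extension of one of the characters $\mathbf 1$, $\chi\kappa$ (where $\kappa$ is the cyclotomic character) by the other, and the upper right-hand entries of $\rho$ can be arranged to generate the unit ideal of $\h$, so that the entries of the corresponding cocycle $b$ generate $\h/I$ as a module. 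The ordinarity of $\rho$ at $p$ and its ramification behaviour at the primes dividing $N$ place the class of $b$ in the appropriate Selmer subgroup of $H^1(G_\Q, (\h/I)(\chi\kappa))$; global class field theory over $\Q_\infty$ (Kummer theory and Poitou--Tate duality, together with the Iwasawa Main Conjecture to match characteristic ideals) identifies this Selmer group with a module constructed from $X_\chi(1)$, and the fact that $b$ generates then yields the surjection $H^-/IH^- \onto X_\chi(1)$.

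The main obstacle is the construction of a genuine surjection onto \emph{all} of $X_\chi(1)$: a pseudo-isomorphism would not be enough, since a quotient of a cyclic $\Lambda$-module is cyclic but a module only pseudo-isomorphic to a cyclic one need not be (for instance $\Lambda/p \oplus \Lambda/T$ is pseudo-isomorphic to $\Lambda/pT$ but is not cyclic). One must therefore track the integral structure: the Gorenstein hypothesis is used precisely to guarantee that $H^-$ is free, hence that the off-diagonal cocycle $b$ is defined over $\h$ with reduction generating $\h/I$, and one needs the exact identification of the Selmer group with $X_\chi(1)$, which is where class field theory and the Main Conjecture enter. This integral refinement of the reducibility argument is the essential content of Ohta's theorem.
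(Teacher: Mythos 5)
The paper does not actually prove this statement: its ``proof'' is a citation to Ohta \cite{comp2}, Corollary 4.2.13, together with the remark that the implication from ``$\h$ Gorenstein'' to ``$X_\chi(1)$ cyclic'' proved there does not use the Gorenstein hypothesis on $\fH$. You are therefore attempting to reconstruct Ohta's argument itself. Your sketch assembles the right ingredients (freeness of $H$ over $\h$ via Theorem \ref{e-s}, Eisenstein reducibility of $\rho$ modulo $I$, unramifiedness of the off-diagonal cocycle via ordinarity, class field theory, the Main Conjecture), but the arrow at the decisive step points the wrong way. Class field theory identifies unramified cohomology classes over $\Q_\infty$ with values in $\h/I$ (suitably twisted) with $\Lambda$-homomorphisms \emph{out of} the appropriate eigenspace of $X$: the cocycle $b$, restricted to $G_{\Q_\infty}$, becomes a homomorphism and hence induces a map $X_\chi(1) \to \h/I$. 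The statement that the values of $b$ generate therefore yields a \emph{surjection} $X_\chi(1) \onto \h/I$, not a surjection $H^-/IH^- \onto X_\chi(1)$. A module surjecting onto a cyclic module need not be cyclic, so at this point nothing has been proved; and no direct construction in the direction you want is available --- the map $\varpi \colon H^-/IH^- \to X_\chi(1)$ of Section \ref{sharifi section} is exactly such a map, and even its well-definedness is conjectural.

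What actually closes the argument is upgrading the surjection $X_\chi(1) \onto \h/I$ to an isomorphism. By the Mazur--Wiles Main Conjecture one has $\Ch(X_\chi(1)) = (\xi_\chi) = \Ch(\h/I)$, so by multiplicativity of characteristic ideals the kernel of the surjection is pseudo-null, i.e.\ finite; since $X_\chi(1)$ has no nonzero finite $\Lambda$-submodule (the fact used in the paper's proof of Proposition \ref{sha prop}), the kernel vanishes and $X_\chi(1) \cong \h/I$ is cyclic. You correctly identify the pseudo-isomorphism issue as the main obstacle and you invoke the Main Conjecture ``to match characteristic ideals,'' but your proposed resolution --- freeness of $H^-$ and generation of $\h/I$ by the cocycle --- only supplies the surjection out of $X_\chi(1)$; the essential final input, the absence of finite submodules in $X_\chi$, never appears in your argument. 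With the direction of the map corrected and that input added, your outline becomes a faithful summary of the Kurihara--Harder--Pink--Ohta proof that the paper delegates to the literature.
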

\begin{proof}
This is proven, for example, in \cite[Corollary 4.2.13]{comp2}, where it is the implication ``(ii) $\implies$ (i)''. Note that the proof of ``(ii) $\implies$ (i)'' given there does not require the assumption that $\fH$ is Gorenstein.
\end{proof}

As remarked in Section \ref{greenberg section}, there are examples where $X_\chi$ is not cyclic and therefore where $\h$ is not Gorenstein. One could also ask if the converse holds.

\begin{lem}
Suppose that $H^-/IH^- \cong X_\chi(1)$. Then $\h$ is Gorenstein if and only if $X_\chi(1)$ is cyclic as a $\Lambda$-module.
\end{lem}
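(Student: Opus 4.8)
The plan is to reduce immediately to Lemma~\ref{h lemma 1} and then observe that, for the module $H^-/IH^-$, being cyclic over $\h$ is the same as being cyclic over $\Lambda$.

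First, by Lemma~\ref{h lemma 1}, $\h$ is Gorenstein if and only if $H^-/IH^-$ is cyclic as an $\h$-module. So it suffices to show that $H^-/IH^-$ is cyclic as an $\h$-module if and only if $X_\chi(1)$ is cyclic as a $\Lambda$-module.

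Next I would record the key structural point: $H^-/IH^-$ is annihilated by $I$, hence is a module over $\h/I$. By the Proposition relating Hecke and Iwasawa algebras, the natural map $\Lambda \to \h$ induces an isomorphism $\Lambda/\xi_\chi \isoto \h/I$; in particular the structure map $\Lambda \to \h/I$ is surjective. Therefore the $\Lambda$-submodules of $H^-/IH^-$ coincide with its $\h/I$-submodules, which coincide with its $\h$-submodules. Consequently $H^-/IH^-$ is cyclic as an $\h$-module if and only if it is cyclic as a $\Lambda$-module. By the hypothesis $H^-/IH^- \cong X_\chi(1)$ (as $\Lambda$-modules), the latter holds if and only if $X_\chi(1)$ is cyclic as a $\Lambda$-module, and combining these equivalences proves the lemma.

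The only point needing any care is the middle step --- that switching between the $\h$-module and $\Lambda$-module structures does not change which subsets are cyclic submodules --- and this is immediate once one notes that both actions factor through the common quotient ring $\h/I \cong \Lambda/\xi_\chi$. There is no substantial obstacle; the content of the statement is entirely in Lemma~\ref{h lemma 1} and the identification $\h/I \cong \Lambda/\xi_\chi$.
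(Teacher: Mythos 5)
Your proof is correct and is exactly the argument the paper intends: the paper's own proof is simply ``Follows from Lemma \ref{h lemma 1},'' and your middle step (that the $\h$-action on $H^-/IH^-$ factors through $\h/I \cong \Lambda/\xi_\chi$, a quotient of $\Lambda$, so cyclicity over $\h$ and over $\Lambda$ coincide) is the right way to fill in the omitted details.
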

\begin{proof}
Follows from Lemma \ref{h lemma 1}.
\end{proof}

In particular, we have the following.

\begin{cor}
Assume $N=1$, and assume Sharifi's Conjecture \ref{sharifi conj} and the Kummer-Vandiver Conjecture \ref{kummer-vandiver}. Then $\h$ is Gorenstein.
\end{cor}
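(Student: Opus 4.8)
The plan is to assemble this from the three results immediately preceding it, under the hypotheses $N=1$, Sharifi's Conjecture \ref{sharifi conj}, and the Kummer--Vandiver Conjecture \ref{kummer-vandiver}. The chain of reasoning is short: first invoke Conjecture \ref{sharifi conj} to get an isomorphism $H^-/IH^- \cong X_\chi(1)$; then invoke Kummer--Vandiver, via Lemma \ref{van lem}, to conclude that $X_\chi$ (equivalently $X_\chi(1)$, since the twist functor $M \mapsto M(r)$ is exact and preserves cyclicity) is cyclic as a $\Lambda$-module; finally feed both facts into the preceding Lemma to conclude that $\h$ is Gorenstein.

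In more detail, I would argue as follows. By Sharifi's Conjecture \ref{sharifi conj}, the maps $\U$ and $\varpi$ are inverse isomorphisms, so in particular $\U \colon X_\chi(1) \isoto H^-/IH^-$ is an isomorphism of $\Lambda$-modules; this is exactly the hypothesis ``$H^-/IH^- \cong X_\chi(1)$'' of the preceding Lemma. Next, since $N=1$, the Kummer--Vandiver Conjecture \ref{kummer-vandiver} asserts $X^+ = 0$, and Lemma \ref{van lem} then gives that $X_\chii$ (and $\X_\theta$) is cyclic. I need the cyclicity of $X_\chi$, not $X_\chii$; one should note here that for $N=1$ the relevant eigenspace statement applies to the character $\chi$ in play, and that $X_\chi(1)$ is cyclic over $\Lambda$ if and only if $X_\chi$ is, because twisting by $\tau^r$ is an exact autoequivalence that does not change the minimal number of generators. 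Granting that, the preceding Lemma applies verbatim: $H^-/IH^- \cong X_\chi(1)$ and $X_\chi(1)$ is cyclic, hence $\h$ is Gorenstein.

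The main obstacle — really the only subtle point in an otherwise formal deduction — is making sure the cyclicity statement coming out of Lemma \ref{van lem} is attached to the correct character. Lemma \ref{van lem} is phrased in terms of $\X_\theta$ and $X_\chii$, whereas the Lemma we want to apply needs $X_\chi(1)$ cyclic. One must either observe that in the $N=1$ setting the argument of Lemma \ref{van lem} applies equally to $X_\chi$ (the same homological input from \cite{N-S-W} about projective dimension one and the adjoint relation of Proposition \ref{X and X prop} being symmetric in the roles of $\chi$ and $\chii$ once one twists appropriately), or invoke Sharifi's conjecture itself, which forces $H^-/IH^-$ — a cyclic $\h$-module once $\h$ is Gorenstein — but that would be circular, so the first route is the right one. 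Everything else is bookkeeping with exact, grading-preserving functors. I would keep the write-up to two or three sentences, citing Conjecture \ref{sharifi conj}, Lemma \ref{van lem}, and the preceding Lemma in that order.
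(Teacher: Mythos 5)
Your proposal is correct and follows essentially the same route as the paper, which derives the corollary immediately from the preceding lemma (taking $H^-/IH^-\cong X_\chi(1)$ from Conjecture \ref{sharifi conj}) together with the cyclicity supplied by Lemma \ref{van lem} under Kummer--Vandiver. You are right to flag the $\chi$ versus $\chii$ mismatch --- the paper states Lemma \ref{van lem} only for $X_\chii$ and passes over this point --- and your resolution (for $N=1$ Kummer--Vandiver kills every even eigenspace, so the same adjoint argument applied to the even character $\omega\chi^{-1}$ gives cyclicity of $X_\chi$, and twisting preserves the number of generators) is exactly what is needed.
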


\subsubsection{Weakly Gorenstein} We recall that $\h$ is said to be {\em weakly Gorenstein} if $\h_\p$ is Gorenstein for every prime ideal $\p \in \Spec(\h)$ of height $1$ such that $I \subset \p$. This definition in relevant in light of the following lemma, which appears in \cite[Section 7.2.10]{kato-fukaya}.

\begin{lem}\label{h lemma 2}
The following are equivalent:
\abcs
\item $\h$ is weakly Gorenstein.
\item $(H^-/IH^-) \otimes_{\Z_p} \Q_p$ is cyclic as an $\h/I \otimes_{\Z_p} \Q_p$-module.
\item $(H^-/IH^-) \otimes_{\Z_p} \Q_p$ is a free $\h/I \otimes_{\Z_p} \Q_p$-module of rank 1.
\endabcs
\end{lem}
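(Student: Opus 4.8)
The plan is to deduce the lemma from Lemma~\ref{h lemma 1} by localizing at each height-one prime over $I$, after first checking that the relevant localizations are exactly the factors of $\h/I\otimes_{\Z_p}\Q_p$. As in Lemma~\ref{h lemma 1}, I would work throughout with the identification $H^-/IH^-\cong \h^\vee/I\h^\vee$ coming from $H^-\cong\h^\vee=\Hom_\Lambda(\h,\Lambda)$ (Theorem~\ref{e-s}), and with the fact that $\h^\vee$ is a faithful $\h$-module.

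\emph{The local picture.} Fix a height-one prime $\p\in\Spec(\h)$ with $I\subseteq\p$. Since $\Lambda$ is regular and $\h$ is finite flat over it, $\h$ is Cohen--Macaulay and its dualizing module $\h^\vee$ localizes to a dualizing module $(\h^\vee)_\p$ of the Cohen--Macaulay local ring $\h_\p$, which is moreover faithful over $\h_\p$ (localization of the faithful module $\h^\vee$). Hence $\h_\p$ is Gorenstein if and only if $(\h^\vee)_\p$ is free of rank one over $\h_\p$. I would now run the Nakayama argument in the proof of Lemma~\ref{h lemma 1} over the local ring $\h_\p$ (note $I\subseteq\p\h_\p$): a generator of $(H^-/IH^-)_\p\cong(\h^\vee)_\p/I(\h^\vee)_\p$ over $(\h/I)_\p$ lifts to a generator of $(\h^\vee)_\p$ over $\h_\p$, which is then free of rank one by faithfulness, and conversely. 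This shows that, for each such $\p$, the conditions ``$\h_\p$ is Gorenstein'', ``$(H^-/IH^-)_\p$ is cyclic over $(\h/I)_\p$'', and ``$(H^-/IH^-)_\p$ is free of rank one over $(\h/I)_\p$'' are all equivalent.

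\emph{Globalizing.} Here I would use that $\h/I\cong\Lambda/\xi_\chi$ (from the Iwasawa Main Conjecture, as recalled above) is a one-dimensional Cohen--Macaulay local ring, equidimensional since $\xi_\chi$ is a nonzero nonunit of the regular ring $\Lambda$; thus the height-one primes of $\h$ containing $I$ are precisely the minimal primes of $\h/I$. None of these contains $p$, since otherwise $p$ would divide $\xi_\chi$, contradicting the theorem of Ferrero--Washington that $\mu_\chi=0$. Therefore inverting $p$ deletes only the maximal ideal of $\h/I$, so $\h/I\otimes_{\Z_p}\Q_p$ is the finite product $\prod_\p(\h/I)_\p$ over these primes, and likewise $(H^-/IH^-)\otimes_{\Z_p}\Q_p\cong\prod_\p(H^-/IH^-)_\p$. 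Since being free of rank one (resp.\ cyclic) over a finite product holds if and only if it holds over each factor, the local equivalences above immediately give $(1)\Leftrightarrow(3)$ and $(2)\Leftrightarrow(3)$.

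I expect the only genuine subtlety to be this last dictionary: recognizing that localizing at the height-one primes of $\h$ lying over $I$ is the same operation on $\h/I$-modules as inverting $p$, which rests on the equidimensionality of $\h/I\cong\Lambda/\xi_\chi$ and on $\mu_\chi=0$. The remaining ingredients --- that Gorenstein-ness of $\h_\p$ is detected by freeness of the dualizing module $(\h^\vee)_\p$, and the Nakayama argument --- are routine localizations of what is already done in Lemma~\ref{h lemma 1}.
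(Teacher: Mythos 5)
Your proof is correct, and it is exactly the intended argument: the paper itself gives no proof of this lemma (it cites Fukaya--Kato, Section 7.2.10), and your argument is the natural localized version of the proof of Lemma~\ref{h lemma 1} (dualizing module, faithfulness, Nakayama), combined with the correct dictionary between inverting $p$ and localizing at the height-one primes over $I$. In particular you rightly identify the one nontrivial input in that dictionary, namely that no such prime contains $p$ because $p\nmid\xi_\chi$ by Ferrero--Washington.
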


Fukaya and Kato have the following result on Sharifi's conjecture, assuming that $\h$ is weakly Gorenstein \cite[Theorem 7.2.8 (1)]{kato-fukaya}.

\begin{thm}[Fukaya-Kato]\label{fk h weak thm}
Assume that $\h$ is weakly Gorenstein. Then $\U \colon X_\chi(1) \to (H^-/IH^-)/(tor)$ and $\varpi \colon (H^-/IH^-)/(tor) \to X_\chi(1)$ are isomorphisms.
\end{thm}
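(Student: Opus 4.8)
The plan is to deduce this from one deep input of Fukaya--Kato together with the weakly Gorenstein hypothesis and some elementary algebra of Artinian rings. The deep input is the unconditional ``half'' of Sharifi's conjecture: using the Beilinson--Kato zeta element, the explicit reciprocity law, and the Iwasawa Main Conjecture, Fukaya and Kato prove that
$$
\varpi \circ \U = \mathrm{id}_{X_\chi(1)}
$$
with no hypothesis. (Only that $\varpi \circ \U$ is an automorphism is used, and the argument is symmetric in $\U$ and $\varpi$, so the parallel unconditional identity $\U \circ \varpi = \mathrm{id}$ on $(H^-/IH^-)/(tor)$ would serve just as well.) Writing $Q = (H^-/IH^-)/(tor)$, this makes $\U \colon X_\chi(1) \to Q$ a split monomorphism with retraction $\varpi$, hence a $\Lambda$-module decomposition
$$
Q = \U\bigl(X_\chi(1)\bigr) \oplus \ker(\varpi).
$$

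Next I would invert $p$. Since $X_\chi$ has no $p$-torsion it is $\Z_p$-torsion-free, so its $\mu$-invariant vanishes, $\xi_\chi$ is a unit times a distinguished polynomial, and $R := (\Lambda/\xi_\chi) \otimes_{\Z_p} \Q_p$ is an Artinian $\Q_p$-algebra; decompose $R = \prod_j R_j$ into Artinian local factors, each indecomposable as an $R$-module. By the hypothesis that $\h$ is weakly Gorenstein, Lemma~\ref{h lemma 2} gives that $(H^-/IH^-) \otimes_{\Z_p} \Q_p$ is free of rank $1$ over $\h/I \otimes_{\Z_p} \Q_p = R$; as $(tor)$ dies after inverting $p$, the same holds for $Q \otimes_{\Z_p} \Q_p$. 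Tensoring the displayed splitting with $\Q_p$ realizes $X_\chi(1) \otimes_{\Z_p} \Q_p$ as a direct summand of this free rank-one $R$-module, hence, by Krull--Schmidt over the Artinian ring $R$, as $\prod_{j \in S} R_j$ for some subset $S$ of the index set. Now compare characteristic ideals over the principal ideal domain $\Lambda \otimes_{\Z_p} \Q_p$: both $X_\chi(1)$ and $\Lambda/\xi_\chi$ have characteristic ideal $(\xi_\chi)$, so their scalar extensions do too, and $\prod_{j \in S} R_j$ can have characteristic ideal $(\xi_\chi)$ only if $S$ is the full index set. Therefore $\U \otimes_{\Z_p} \Q_p$, and hence $\varpi \otimes_{\Z_p} \Q_p$, is an isomorphism.

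Finally I would upgrade this to an integral statement. From the displayed splitting, $\coker(\U) \cong \ker(\varpi)$ is a $\Z_p$-module direct summand of $Q$, and $Q$ is $\Z_p$-torsion-free by construction, so $\ker(\varpi)$ is $\Z_p$-torsion-free. On the other hand $\ker(\varpi) \otimes_{\Z_p} \Q_p = 0$ by the previous paragraph, and $\ker(\varpi)$ is finitely generated over $\Z_p$ (indeed $H^-/IH^-$ is finite over $\sO$ since $\xi_\chi$ is distinguished), so $\ker(\varpi)$ is finite; being also $\Z_p$-torsion-free it is $0$. Hence $\U$ is an isomorphism and $\varpi = \U^{-1}$ is its inverse, as desired.

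The hard part is the first ingredient: the \emph{unconditional} identity $\varpi \circ \U = \mathrm{id}$ is where all the depth lies, since it encodes the compatibility of Sharifi's two constructions via zeta elements, the explicit reciprocity law, and the Iwasawa Main Conjecture. Once this and Lemma~\ref{h lemma 2} are granted, the remaining steps are formal module theory over Artinian quotients of $\Lambda$.
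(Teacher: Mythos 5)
The paper itself does not prove this theorem; it is quoted directly from Fukaya--Kato \cite[Theorem 7.2.8 (1)]{kato-fukaya}, so there is no internal proof to compare against. Your module-theoretic superstructure is fine: granted a splitting $\varpi\circ\U=\mathrm{id}$ and Lemma \ref{h lemma 2}, the identification of $X_\chi(1)\otimes\Q_p$ with a product of local factors of the Artinian ring $(\Lambda/\xi_\chi)\otimes\Q_p$, the characteristic-ideal count forcing that product to be all of the factors, and the torsion-freeness argument killing $\ker(\varpi)$ are all correct (note the decomposition is $R$-linear because the $\h/I$-action on $H^-/IH^-$ factors through $\Lambda/\xi_\chi$).

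The gap is the ``deep input'' itself. The unconditional identity $\varpi\circ\U=\mathrm{id}_{X_\chi(1)}$ is not a theorem of Fukaya--Kato or anyone else; it is precisely (half of) the conclusion that they extract \emph{from} hypotheses such as weak Gorensteinness of $\h$ or $\xi_\chi$ having no multiple roots. The present paper is explicit about this: the statement \st{C(S. II)} (that $\U$ and $\varpi$ are inverse isomorphisms modulo torsion, which contains your identity) is listed as a \emph{conjecture}, recorded as known only under \st{Q($\xi$)} via Theorem \ref{fk main thm}. Two further pieces of internal evidence show your black box cannot be unconditional: first, even the existence of $\varpi$ as a map on $(H^-/IH^-)/(tor)$ is described in Section \ref{sharifi section} as a ``conjectural construction,'' so the composite $\varpi\circ\U$ is not even defined without hypotheses; second, Proposition \ref{sha prop} goes to the trouble of deducing injectivity of $\U$ from finiteness of $\coker(\U)$ via Fitting ideals, which would be vacuous if $\U$ were known to be split injective in general. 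What Fukaya--Kato actually prove unconditionally is a weaker statement about zeta elements (a comparison of two maps $\Lambda/\xi_\chi\to H^-/IH^-$), and the hypothesis that $\h$ is weakly Gorenstein is used from the outset to propagate that cyclic-module statement to all of $H^-/IH^-$ and to make sense of $\varpi$. So your argument has displaced, rather than supplied, the arithmetic content of the theorem, and the displaced statement is not available in the literature.
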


Their work also implies the following result on the converse.

\begin{lem}\label{h weakly gor lemma}
Assume that $\coker(\U)$ is finite and that $X_\chi(1) \otimes_{\Z_p} \Q_p$ is cyclic. Then $\h$ is weakly Gorenstein.
\end{lem}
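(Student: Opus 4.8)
The plan is to combine the Fukaya--Kato criterion of Lemma \ref{h lemma 2} with the hypothesis on $\U$. By Lemma \ref{h lemma 2}, it suffices to show that $(H^-/IH^-)\otimes_{\Z_p}\Q_p$ is cyclic as an $(\h/I)\otimes_{\Z_p}\Q_p$-module, equivalently that it is generated by one element over that ring. Since $\h/I \cong \Lambda/\xi_\chi$ and $X_\chi(1)$ is an $\h/I$-module (it is killed by $I$, being in the image and target of the Sharifi maps), it is natural to work with $\U$ after inverting $p$.

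First I would observe that $\coker(\U)$ finite implies $\coker(\U)\otimes_{\Z_p}\Q_p = 0$, so that
$$
\U\otimes\Q_p : X_\chi(1)\otimes_{\Z_p}\Q_p \longrightarrow (H^-/IH^-)\otimes_{\Z_p}\Q_p
$$
is surjective. This is a surjection of modules over $(\h/I)\otimes\Q_p = (\Lambda/\xi_\chi)\otimes\Q_p$, since $\U$ is a morphism of $\Lambda$-modules (indeed of $\h/I$-modules). Next, by hypothesis $X_\chi(1)\otimes\Q_p$ is cyclic as a $\Lambda\otimes\Q_p$-module; because it is annihilated by $\xi_\chi$, it is cyclic already as a $(\Lambda/\xi_\chi)\otimes\Q_p$-module, hence cyclic as an $(\h/I)\otimes\Q_p$-module. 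A quotient of a cyclic module is cyclic, so $(H^-/IH^-)\otimes\Q_p$ is cyclic over $(\h/I)\otimes\Q_p$. Condition (2) of Lemma \ref{h lemma 2} is therefore satisfied, and that lemma gives that $\h$ is weakly Gorenstein.

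The only subtlety, and the step I would be most careful about, is making sure all the maps in sight really are $\h/I$-linear (not just $\Lambda$-linear) so that cyclicity over $\Lambda\otimes\Q_p$ transfers to cyclicity over the a priori larger ring $(\h/I)\otimes\Q_p$; here one uses that $X_\chi(1)$ is killed by $I$, so that its module structure over $\h/I$ factors through $\Lambda/\xi_\chi$ via the isomorphism $\Lambda/\xi_\chi \isoto \h/I$, and that $\U$ respects this structure. Granting that, the argument is a short diagram chase together with the cited equivalence; no further input is needed. (Alternatively, one can quote Theorem \ref{fk h weak thm} in reverse: the present lemma is essentially the contrapositive packaging of the Fukaya--Kato analysis, and the referenced \cite[Theorem 7.2.8]{kato-fukaya} presumably states exactly this converse.)
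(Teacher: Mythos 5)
Your proof is correct and follows essentially the same route as the paper: finiteness of $\coker(\U)$ gives surjectivity of $\U\otimes\Q_p$, so the cyclic module $X_\chi(1)\otimes_{\Z_p}\Q_p$ surjects onto $(H^-/IH^-)\otimes_{\Z_p}\Q_p$, and Lemma~\ref{h lemma 2} concludes. Your extra care about transferring cyclicity from $\Lambda\otimes\Q_p$ to $(\h/I)\otimes\Q_p$ via $\Lambda/\xi_\chi\isoto\h/I$ is a point the paper leaves implicit, but it is handled correctly.
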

\begin{proof}
If $\coker(\U)$ is finite, then $\U: X_\chi(1) \otimes_{\Z_p} \Q_p \to (H^-/IH^-) \otimes_{\Z_p} \Q_p$ is surjective, so this follows from Lemma \ref{h lemma 2}.
\end{proof}
Since this lemma applies whenever Conjecture \ref{weak sharifi conj} and Conjecture \ref{green conj} hold, we should conjecture that $\h$ is always weakly Gorenstein.

\begin{conj}
The ring $\h$ is weakly Gorenstein.
\end{conj}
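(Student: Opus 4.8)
The plan is not to prove the conjecture outright --- an unconditional proof would settle famous open problems --- but to reduce it, via Lemma~\ref{h weakly gor lemma}, to two statements that are expected on independent grounds. By that lemma it suffices to establish (i) that $\coker(\U)$ is finite and (ii) that $X_\chi(1) \otimes_{\Z_p} \Q_p$ is cyclic as a $\LpN_\chi \otimes_{\Z_p} \Q_p$-module. Statement (ii) is the weak form \st{C(Cyc' I)} of Greenberg's conjecture, verified in every computed example. Statement (i) is the weak form \st{C($\U$ I)} of Sharifi's conjecture; it follows from the pseudo-isomorphism version \st{C(S. I)}, and it already holds unconditionally whenever $\xi_\chi$ has no repeated prime factor --- the hypothesis \st{Q($\xi$)} --- in view of Fukaya--Kato's implications \st{Q($\xi$)} $\implies$ \st{C(S. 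II)} $\implies$ \st{C(S. I)} $\implies$ \st{C($\U$ I)} (Theorem~\ref{fk main thm}). Thus ``proving'' the conjecture amounts to recording this conditional implication, together with the observation that both hypotheses of Lemma~\ref{h weakly gor lemma} hold in all known cases, so that no counterexample is in sight.

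To see why this reduction is a sensible route toward an eventual unconditional proof, one would unwind it as follows. By Lemma~\ref{h lemma 2}, $\h$ is weakly Gorenstein precisely when $(H^-/IH^-) \otimes_{\Z_p} \Q_p$ is cyclic over $\h/I \otimes_{\Z_p} \Q_p$; since $\h/I \cong \Lambda/\xi_\chi$, this is a local question at the height-one primes $\p \supset I$ of $\h$, which lie over the irreducible factors of $\xi_\chi$ in $\Lambda$, where $\Lambda$ localizes to a discrete valuation ring and $\h_\p$ is finite flat over it. The map $\U \colon X_\chi(1) \to H^-/IH^-$ supplies a candidate generator of $H^-/IH^-$ built from the class group, so the problem collapses exactly to the surjectivity of $\U$ after inverting $p$ --- statement (i) --- together with statement (ii), which ensures that the source $X_\chi(1)$ is itself rationally cyclic.

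The main obstacle is that there is at present no route to either input that avoids a famous open conjecture: proving (i) in general is tantamount to a weak form of Sharifi's conjecture, and proving (ii) is Greenberg's conjecture. So the best one can currently hope to record is the conditional implication above, which is why the statement is offered only as a conjecture.
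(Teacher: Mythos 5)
Your reduction is exactly the paper's own justification for stating this as a conjecture: Section 5.1 derives it from Lemma~\ref{h weakly gor lemma}, whose two hypotheses are precisely the weak Sharifi statement (finiteness of $\coker(\U)$, known under \st{Q($\xi$)} by Fukaya--Kato) and the rational cyclicity of $X_\chi(1)$ from Greenberg's conjecture. Since the statement is a conjecture rather than a theorem, recording this conditional implication is all the paper does, and your account matches it.
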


\subsection{Conditions on $\fH$} We consider under what conditions $\fH$ is Gorenstein or weakly Gorenstein.

\subsubsection{Gorenstein} Recall that $\HDM^-/H^- \cong \h/I$. In particular, the natural inclusion $I\HDM^- \subset \HDM^-$ lands in $H^-$. The following proposition is proven in \cite{wake}. It can also be proven along the same lines as the proof of Proposition \ref{hecke prop} below.

\begin{prop}
The following are equivalent:
\abcs
\item $\fH$ is Gorenstein.
\item $\HDM^-$ is a free $\h$-module of rank $1$.
\item $I\HDM^- = H^-$.
\endabcs
\end{prop}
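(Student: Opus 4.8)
The plan is to prove the equivalence cyclically, $(1)\Rightarrow(2)\Rightarrow(3)\Rightarrow(1)$, using three ingredients: the isomorphism $\tH^-\cong\fH^\vee$ of Theorem \ref{e-s}; the identification $\HDM^-=\tH^-\otimes_\fH\h$, which holds because complex conjugation acts $\fH$-linearly on $\tH$ so that the $\pm$-decomposition survives the base change $-\otimes_\fH\h$; and the short exact sequence of $\h$-modules
\[
0\to H^-\to\HDM^-\to\h/I\to 0
\]
recording the relation $\HDM^-/H^-\cong\h/I$. Throughout I would use that $\Lambda\to\fH$ and $\Lambda\to\h$ are finite flat with the same residue field $\Lambda/\m_\Lambda$, and that $\fH\onto\h$ is a surjection of local rings inducing an isomorphism on residue fields.

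For $(1)\Rightarrow(2)$: if $\fH$ is Gorenstein then $\fH^\vee$, hence $\tH^-$, is free of rank one over $\fH$, so $\HDM^-=\tH^-\otimes_\fH\h\cong\fH\otimes_\fH\h=\h$. For $(2)\Rightarrow(3)$: fix an $\h$-module isomorphism $\HDM^-\cong\h$ and compose with the surjection $\HDM^-\onto\HDM^-/H^-\cong\h/I$ to obtain a surjection $\phi\colon\h\onto\h/I$ of $\h$-modules. The image $\phi(1)$ generates $\h/I$ as a module over itself, hence is a unit of $\h/I$, hence (as $I$ lies in the maximal ideal) lifts to a unit of the local ring $\h$; thus $\phi$ is multiplication by a unit followed by the quotient map, so $\ker\phi=I$. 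Transporting back through $\HDM^-\cong\h$, the submodule $H^-$ corresponds to the ideal $I$, i.e.\ $H^-=I\HDM^-$.

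The main work is in $(3)\Rightarrow(1)$, and here is how I would do it. Assuming $I\HDM^-=H^-$, the exact sequence gives $\HDM^-/I\HDM^-=\HDM^-/H^-\cong\h/I$; reducing once more modulo the maximal ideal $\m_\h$ of $\h$ yields $\HDM^-\otimes_\h\h/\m_\h\cong\h/\m_\h$, so $\HDM^-$ is cyclic over $\h$ by Nakayama. Since $\HDM^-=\tH^-\otimes_\fH\h$ and the residue fields of $\h$ and $\fH$ agree, $\tH^-$ modulo the maximal ideal of $\fH$ is one-dimensional, so $\tH^-$ is cyclic over $\fH$, say $\tH^-\cong\fH/J$ with $J=\mathrm{ann}_\fH(\tH^-)$. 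Now $\tH^-\cong\fH^\vee=\Hom_\Lambda(\fH,\Lambda)$ is a \emph{faithful} $\fH$-module: $\fH$ is torsion-free over the domain $\Lambda$, and $\fH^\vee\otimes_\Lambda\Frac(\Lambda)=\Hom_{\Frac(\Lambda)}(\fH\otimes_\Lambda\Frac(\Lambda),\Frac(\Lambda))$ is a faithful module over the finite $\Frac(\Lambda)$-algebra $\fH\otimes_\Lambda\Frac(\Lambda)$, so $\mathrm{ann}_\fH(\fH^\vee)$ dies in $\fH\otimes_\Lambda\Frac(\Lambda)$ and is therefore $0$. Hence $J=0$, $\fH^\vee\cong\fH$ is free of rank one, and $\fH$ is Gorenstein.

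I expect the only real obstacle to be the faithfulness of $\fH^\vee$ invoked in the last step --- the rest is diagram-chasing with the two exact sequences plus Nakayama's lemma. If one prefers to avoid the generic-fibre computation, one can instead quote that $\fH$ is reduced (equivalently that $\fH\otimes_\Lambda\Frac(\Lambda)$ is a product of fields), which yields the same conclusion; either way, this is the single point at which the argument uses something beyond formal manipulation of the modules at hand.
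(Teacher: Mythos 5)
Your proof is correct, and it is essentially the argument the paper intends: the paper gives no proof of this proposition beyond citing \cite{wake} and noting it follows ``along the same lines as'' Proposition \ref{hecke prop}, whose proof uses exactly your ingredients (the sequence $0\to H^-\to\HDM^-\to\h/I\to 0$, Nakayama to get cyclicity of $\tH^-$ over $\fH$, and faithfulness of the dualizing module $\tH^-\cong\fH^\vee$ to upgrade cyclic to free). Your verification of faithfulness via the generic fibre is fine, though since $\fH$ is finite flat over the local ring $\Lambda$ it is in fact $\Lambda$-free, which gives faithfulness of $\Hom_\Lambda(\fH,\Lambda)$ even more directly.
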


It was proven by Ohta that $\fH$ is Gorenstein if $\X_\theta=0$ \cite[Theorem I]{comp2}. Similar results were obtained earlier by Skinner-Wiles \cite{skinner-wiles}. 

The following theorem illustrates the importance of the condition that $\fH$ is Gorenstein. It was first proven by Sharifi, following work of Ohta \cite{cong}.

\begin{thm}[Ohta-Sharifi]\label{H gor implies U iso thm}
Suppose that $\fH$ is Gorenstein. Then $\U$ is an isomorphism.
\end{thm}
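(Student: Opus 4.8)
The plan is to follow Sharifi \cite{sharifi}, whose argument rests on Ohta's analysis \cite{cong} of the $\fH$-adic Galois representation carried by $\tH$. The first observation is that, by Theorem \ref{e-s}, saying that $\fH$ is Gorenstein is the same as saying that $\tH^- \cong \fH^\vee$ is free of rank one over $\fH$. Granting this, one brings in the continuous $G_\Q$-action on $\tH$ commuting with $\fH$: it is ordinary at $p$, so $\tH$ carries a $G_{\Q_p}$-stable filtration with unramified quotient, and modulo the Eisenstein ideal $\I$ the representation is reducible — that reducibility is precisely what cuts out the Eisenstein component. I would use the freeness of $\tH^-$ to pin down an $\fH$-basis of the relevant part of $\tH$ adapted simultaneously to the $\pm$-decomposition, to the ordinary filtration at $p$, and to the cuspidal submodule $H^- \subset \tH^-$ whose quotient is generated by $\zinf$. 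This normalization is exactly what is unavailable when $\fH$ fails to be Gorenstein.

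With such a basis fixed, reduction modulo $\I$ displays $\tH$ (after a suitable twist) as an extension of $\fH/\I \cong \Lambda$ by a twist of $H^-/IH^-$, classified by a continuous $1$-cocycle $c$. Ordinarity at $p$ and unramifiedness outside $Np$ place the class of $c$ in a Selmer-type subgroup of a Galois cohomology group, and class field theory together with Kummer theory — invoking the Iwasawa Main Conjecture only through the identifications $\fH/\I \cong \Lambda$ and $\h/I \cong \Lambda/\xi_\chi$ — turns the class of $c$ into a canonical $\Lambda$-homomorphism $X_\chi(1) \to H^-/IH^-$. This homomorphism is Sharifi's $\U$; matching it with his original cup-product/reciprocity definition of $\U$ is a separate, purely definitional ingredient taken from \cite{sharifi}.

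It remains to see that under the Gorenstein hypothesis $\U$ is actually bijective, not merely a pseudo-isomorphism. Here freeness of $\tH^-$ of rank one is what forces the relevant congruence module to vanish rather than merely be finite, so that the cocycle $c$ is primitive enough for $\U$ to be an isomorphism; when $\fH$ is not Gorenstein the obstruction is exactly the module $H^-/I\HDM^-$, which is nonzero precisely then (cf. \cite{wake}).

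The main obstacle is the integral — as opposed to rational — Galois-module theory: one must control $\tH$ as an $\fH$-lattice, its local filtration at $p$, and the Galois cohomology groups above precisely enough that all the error terms, which a priori are only finite and are genuinely nonzero when $\fH$ is not Gorenstein, are forced to vanish. This is the substance of Ohta's work in \cite{cong}, which I would invoke rather than reprove; together with Sharifi's comparison of his reciprocity map with its Galois-cohomological incarnation, it gives the theorem.
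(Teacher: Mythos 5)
Your proposal is essentially the paper's proof: the paper simply cites Sharifi's Proposition 4.10 (which rests on Ohta's analysis in \cite{cong}), observing that only the Gorenstein hypothesis on $\fH$ --- and not the stronger assumption $p \nmid B_{1,\theta^{-1}}$ --- is actually used there, and your sketch is a faithful outline of what that cited argument does, with the hard integral Galois-module input correctly deferred to \cite{cong}. One small correction: the cup-product/reciprocity construction is Sharifi's definition of $\varpi$, not of $\U$; the map $\U$ is defined directly from the Galois action on the Drinfeld--Manin modified cohomology, i.e., from the extension cocycle you describe, so the ``matching'' step you flag is essentially definitional.
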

\begin{proof}
This is proven in \cite[Proposition 4.10]{sharifi}, where the assumption ``$p \nmid B_{1,\theta^{-1}}$" is not needed in the proof -- all that is needed is the weaker assumption that $\fH$ is Gorenstein.
\end{proof}

Sharifi used this as evidence for his conjecture.  However, it is not true that $\fH$ is always Gorenstein; the following is the main result of the paper \cite{wake}.

\begin{thm}
If $\fH$ is Gorenstein, then either $X_\theta=0$ or $X_\chi=0$. Moreover, there are examples where $X_\theta \ne 0$ and $X_\chi \ne 0$, and so $\fH$ is not always Gorenstein.
\end{thm}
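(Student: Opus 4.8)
The plan is to treat the dichotomy first and the existence of counterexamples second. For the dichotomy, suppose $\fH$ is Gorenstein. If $X_\chi=0$ there is nothing to prove --- in that case $\fH=\Lambda$, cf. Remark \ref{remark} --- so assume $X_\chi\neq 0$ and aim to prove $X_\theta=0$. From the Gorenstein hypothesis two inputs are available: by the Ohta--Sharifi theorem (Theorem \ref{H gor implies U iso thm}), $\U$ is an isomorphism, so $H^-/IH^-\cong X_\chi(1)$; and by the characterization recalled in Section \ref{hecke section}, $\HDM^-$ is free of rank one over $\h$ and $I\HDM^-=H^-$. Moreover the hypothesis $X_\chi\neq 0$ ensures that $\fH$ is not simply $\Lambda$ (so that $\I\neq 0$), which will keep the Galois-theoretic analysis below from being vacuous.

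The core step is to convert the equality $I\HDM^-=H^-$ into a statement about $\X_\theta$ by analyzing the short exact sequence of $\h$-modules
$$0\to H^-\to \HDM^-\to \h/I\to 0$$
in light of Ohta's description of $\HDM^-$ (equivalently $\tH^-$) as a Galois module. On the Eisenstein component the associated two-dimensional representation is reducible, and the ``gluing class'' of this sequence is cut out by class field theory and Coleman power series --- which is precisely the arithmetic content of the sequence (\ref{euxxc2}), namely $\Lambda/\xi_\chii\to\X_\theta\to X_\theta\to 0$. I would show that, given $\I\neq 0$, the condition $I\HDM^-=H^-$ forces the leftmost arrow of (\ref{euxxc2}) to be surjective. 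Granting that, the argument finishes quickly: $\Lambda/\xi_\chii$ has no nonzero finite submodule and $\Ch(\X_\theta)=(\xi_\chii)$ by Proposition \ref{X and X prop}, so a surjection $\Lambda/\xi_\chii\twoheadrightarrow\X_\theta$ must be an isomorphism, and then (\ref{euxxc2}) gives $X_\theta=0$, as wanted.

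I expect the core step to be essentially the entire difficulty. It amounts to identifying the module $H^-/I\HDM^-$ --- which measures the failure of $\fH$ to be Gorenstein --- with the class-field-theoretic cokernel data underlying (\ref{euxxc2}), and thence with $\X_\theta$; this requires Ohta's theorems on the Galois-module structure of the ordinary cohomology of the open modular curve, together with a careful matching of the relevant extension class against the map $\Lambda/\xi_\chii\to\X_\theta$. Everything around it (the reductions above, the final module-theoretic deduction, and the consistency check with Remark \ref{remark}) is routine by comparison.

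For the ``moreover'' assertion it suffices to exhibit a single pair $(N,\theta)$ satisfying the running hypotheses with $X_\theta\neq 0$ and $X_\chi\neq 0$ simultaneously; the first part then shows $\fH$ is not Gorenstein for that pair. Such pairs can be read off from known numerical data on cyclotomic class groups: one chooses $\chi=\omega^{-1}\theta$ to be a quadratic character attached to an imaginary quadratic field with nontrivial $p$-part of its class group, which gives $X_\chi\neq 0$, arranged --- via a reflection or genus-theory computation on the companion even character --- so that $X_\theta\neq 0$ holds as well.
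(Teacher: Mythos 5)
The paper never proves this theorem: it is quoted verbatim as the main result of \cite{wake}, so there is no internal proof to compare against and your argument has to stand on its own. Its skeleton (dispose of $X_\chi=0$ via Remark \ref{remark}, invoke Theorem \ref{H gor implies U iso thm} and the characterization $I\HDM^-=H^-$, land in the sequence (\ref{euxxc2})) is the right one, but the entire mathematical content is concentrated in the sentence ``I would show that \dots\ $I\HDM^-=H^-$ forces the leftmost arrow of (\ref{euxxc2}) to be surjective,'' and that assertion is not an intermediate step: since (\ref{euxxc2}) exhibits $X_\theta$ precisely as the cokernel of that arrow, surjectivity \emph{is} the statement $X_\theta=0$. (Your subsequent ``finish'' via characteristic ideals is therefore redundant.) Moreover, the identification you gesture at --- of $H^-/I\HDM^-$ ``with $\X_\theta$'' --- is not the correct one, and as stated your sketch contains no mechanism that could ever output the alternative ``or $X_\chi=0$'' except by fiat at the outset.

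The gap can be closed entirely with tools already in this paper, and doing so shows where $X_\chi$ really enters. By Lemma \ref{coker phi lem}, $H^-/I\HDM^-=\coker(\Phi)$, so the Gorenstein hypothesis gives $\coker(\Phi)=0$; Theorem \ref{H gor implies U iso thm} gives $\ker(\U)=0$; feeding both into the exact sequence of Lemma \ref{sequence lem} yields $\coker(\nu'')\otLam X_\chi(1)=0$. A tensor product of two nonzero finitely generated modules over the local ring $\Lambda$ is nonzero (reduce modulo $\m_\Lambda$ and apply Nakayama), so either $X_\chi(1)=0$ or $\coker(\nu'')=0$, and in the latter case Lemma \ref{lem2} gives $X_\chi=0$ or $X_\theta=0$. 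Thus the module controlled by the Gorenstein defect is $\coker(\nu'')\otLam X_\chi(1)$ --- essentially $\coker(\bnu)$, whose characteristic ideal matches that of $X_\theta$ by Lemma \ref{lem1}, cut down by $\xi_\chi$ and tensored with $X_\chi(1)$ --- and the tensor factor is exactly the source of the dichotomy in the theorem. All of the ``careful matching of the extension class'' you defer is Theorem \ref{nu=theta thm} ($\nu'=\Theta$), which is a genuinely hard input from \cite{kato-fukaya} and cannot be waved through. For the ``moreover'' part your outline is essentially the paper's own: note only that the reflection argument needs the $p$-class group of the imaginary quadratic field to be non-cyclic, not merely nontrivial, to force $X_\theta\ne 0$.
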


\subsubsection{Weakly Gorenstein} Recall that $\fH$ is said to be {\em weakly Gorenstein} if $\fH_\p$ is Gorenstein for every prime ideal $\p \in \Spec(\fH)$ such that $I_\fH \subset \p$.

\begin{prop} \label{hecke prop}
Let $\fP \subset \Spec(\fH)$ be the set of height $1$ prime ideals $\p$ such that $I_\fH \subset \p$. The following are equivalent:
\abcs
\item \label{first Hecke} $H^-/I\HDM^-$ is finite.
\item As a module over $\h \otimes_{\Z_p} \Q_p$, $\HDM^-/I\HDM^- \otimes_{\Z_p} \Q_p$ is generated by $\zinf$.
\item For any $\p \in \fP$, $(\tH^-)_\p$ is generated by $\zinf$.
\item For any $\p \in \fP$, $(\tH^-)_\p$ is generated by $1$ element.
\item For any $\p \in \fP$, $(\tH^-)_\p$ is free of rank $1$ as an $\fH_\p$-module.
\item $\fH$ is weakly Gorenstein.
\endabcs
\end{prop}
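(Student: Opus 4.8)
The plan is to separate the six conditions into a purely ring-theoretic block about height-one localizations, a ``propagation of the generator $\zinf$'' step, and a global finiteness translation, and to tie them together with Nakayama's lemma and the short exact sequences recalled above.

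First I would establish $(4)\Leftrightarrow(5)\Leftrightarrow(6)$. For $\p\in\fP$ the ring $\fH_\p$ is a one-dimensional Cohen--Macaulay local ring whose dualizing module is $\bigl(\Hom_\Lambda(\fH,\Lambda)\bigr)_\p=(\fH^\vee)_\p$ (localization of the canonical module, using that $\fH$ is finite free over the regular ring $\Lambda$); so $\fH_\p$ is Gorenstein precisely when $(\fH^\vee)_\p$ is free of rank one, and since $(\fH^\vee)_\p\cong(\tH^-)_\p$ by Theorem \ref{e-s} this is $(6)\Leftrightarrow(5)$. The implication $(5)\Rightarrow(4)$ is trivial, and $(4)\Rightarrow(5)$ follows because $\fH^\vee=\Hom_\Lambda(\fH,\Lambda)$ is a faithful $\fH$-module ($\fH$ being $\Lambda$-free), hence $(\fH^\vee)_\p$ is faithful over $\fH_\p$, so a surjection $\fH_\p\onto(\fH^\vee)_\p$ arising from a single generator has zero kernel and is an isomorphism.

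Next, $(3)\Leftrightarrow(4)$: one direction is immediate, and $(4)\Rightarrow(3)$ is the step I regard as the crux. For $\p\in\fP$ the composite $(\tH^-)_\p\onto(\HDM^-)_\p\onto(\h/I)_\p$ (the second map because $\HDM=\tH\otimes_\fH\h$ and $\fH\onto\h$, the third because $\HDM^-/H^-\cong\h/I$) sends $\zinf$ to a generator of $(\h/I)_\p$, which is nonzero since $I_\fH\subset\p$; hence the image of $\zinf$ in $(\tH^-)_\p\otimes_{\fH_\p}\kappa(\p)$ is nonzero. If $(\tH^-)_\p$ is generated by one element, that residue space is one-dimensional over $\kappa(\p)$, so $\zinf$ spans it and by Nakayama generates $(\tH^-)_\p$. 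For $(1)\Leftrightarrow(3)$ I would argue by support: $H^-/I\HDM^-$ is finitely generated over $\fH/I_\fH$, so it is finite if and only if $(H^-/I\HDM^-)_\p=0$ for every prime $\p$ of $\fH$ of height $\le1$, which is automatic unless $\p\in\fP$. Localizing $0\to H^-\to\HDM^-\to\h/I\to0$ at $\p\in\fP$ and reducing modulo $I\HDM^-$ (which is contained in $H^-$) gives $0\to(H^-/I\HDM^-)_\p\to(\HDM^-/I\HDM^-)_\p\to(\h/I)_\p\to0$; since $I$ annihilates $\HDM^-/I\HDM^-$ and $\zinf$ maps to a generator of $(\h/I)_\p$, the left-hand term vanishes exactly when $(\HDM^-/I\HDM^-)_\p$ is generated by $\zinf$, and as $\HDM^-/I\HDM^-=\tH^-/I_\fH\tH^-$ (using $\HDM^-=\tH^-\otimes_\fH\h$ and $\h/I\cong\fH/I_\fH$) this is equivalent by Nakayama to $(\tH^-)_\p$ being generated by $\zinf$. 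Finally $(1)\Leftrightarrow(2)$: as $H^-/I\HDM^-$ is a module over $\h/I\cong\Lambda/\xi_\chi$ and $\mu(\xi_\chi)=0$ by Ferrero--Washington, ``$H^-/I\HDM^-$ finite'' is equivalent to ``$(H^-/I\HDM^-)\otimes_{\Z_p}\Q_p=0$''; tensoring $0\to H^-/I\HDM^-\to\HDM^-/I\HDM^-\to\h/I\to0$ with $\Q_p$ and using that $\zinf$ maps to a free generator of $(\h/I)\otimes\Q_p$, this vanishing is precisely condition $(2)$.

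The step I expect to be the main obstacle is making the equivalence ``$(\HDM^-/I\HDM^-)_\p$ (or its $\Q_p$-analogue) is generated by $\zinf$'' $\Leftrightarrow$ ``$H^-/I\HDM^-$ vanishes after localizing'' genuinely airtight, and bookkeeping the localizations so it is transparent that only primes in $\fP$ can contribute. The fact that unlocks this is that $I$ --- equivalently the image of $I_\fH$ in $\h$ --- annihilates $\HDM^-/I\HDM^-$, so the relevant short exact sequences literally split off the class $\zinf$; apart from the structural inputs recalled earlier, the only external ingredient is the vanishing of the Iwasawa $\mu$-invariant, used to identify ``finite'' with ``$\Z_p$-torsion''.
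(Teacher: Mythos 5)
Your argument is correct, and all the structural inputs are the ones the paper uses: the identification $(\tH^-)_\p\cong(\fH^\vee)_\p$ from Theorem \ref{e-s}, faithfulness of the dualizing module, Nakayama over $\fH_\p$, the cusp sequence $0\to H^-\to\HDM^-\to\h/I\to 0$ with $\zinf$ generating the quotient, and the identification $\HDM^-/I\HDM^-\cong\tH^-/I_\fH\tH^-$. The difference is organizational: the paper proves a single cycle $(1)\Rightarrow(2)\Rightarrow(3)\Rightarrow(4)\Rightarrow(5)\Rightarrow(1)$ together with $(5)\Leftrightarrow(6)$, so the implications $(4)\Rightarrow(3)$ and $(2)\Rightarrow(1)$ are never argued directly, whereas you prove each pairwise equivalence on its own. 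Your direct $(4)\Rightarrow(3)$ (the image of $\zinf$ in $(\tH^-)_\p\otimes\kappa(\p)$ is nonzero because it already generates $(\h/I)_\p\otimes\kappa(\p)$, so in a one-dimensional residue space it must span) is a nice self-contained argument that the paper only obtains by going around the loop. The price of your direct $(2)\Rightarrow(1)$ is the explicit appeal to vanishing of the $\mu$-invariant of $\xi_\chi$ (Ferrero--Washington plus the Main Conjecture) to upgrade ``killed by a power of $p$'' to ``finite''; the paper's cyclic route replaces this by the support argument over $\fH/I_\fH$ in its step $(5)\Rightarrow(1)$. Note, though, that the paper's step $(2)\Rightarrow(3)$ asserts that $p$ is invertible in $\fH_\p$ for every $\p\in\fP$, which is exactly the statement that no prime of $\fP$ lies over $(\pi)\subset\Lambda$, i.e.\ $\mu(\xi_\chi)=0$ again --- so you have not actually imported anything the paper avoids, you have just made the dependence visible. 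Both of your ``surjection of a cyclic module onto $(\h/I)_\p$ sending $\zinf$ to a generator is injective'' steps are fine (a surjective endomorphism of a finitely generated module over a noetherian commutative ring is injective), and this is the same mechanism as the paper's $(5)\Rightarrow(1)$.
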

\begin{proof}
\mbox{}
$(1) \implies (2)$: Follows from taking $\otimes_{\Z_p} \Q_p$ in the exact sequence
$$
0 \to H^-/I\HDM^- \to \HDM^-/I\HDM^- \to \HDM^-/H^- \to 0.
$$

$(2) \implies (3)$: Since $\fH/I_\fH \isoto \h/I$, we have that 
$$\HDM^-/I\HDM^- \cong \HDM^- \otimes_\h \h/I \cong \tH^- \otimes_\fH \fH/I_\fH.$$
For any $\p \in \fP$, we have that $p$ is invertible in $\fH_\p$, so $(\HDM^-/I\HDM^-)_\p = (\tH^-/I_\fH \tH^-)_\p$ is generated by $\zinf$. By Nakayama's Lemma, we have (3).

$(3) \implies (4)$: Clear.

$(4) \implies (5)$: By Theorem \ref{e-s}, $(\tH^-)_\p$ is a dualizing module for $\fH_\p$, and so it is faithful. Then, if it is generated by $1$ element, it is free.

$(5) \Longleftrightarrow (6)$: Since $(\tH^-)_\p$ is a dualizing module for $\fH_\p$, this is clear.

$(5) \implies (1)$:  Note that $H^-/I\HDM^-$ is a $\fH/I_\fH$-module. To show (1), it suffices (by Lemma \ref{fin lem}) to show that $H^-/I\HDM^-$ is not supported on any non-maximal prime ideals of $\fH/I_\fH$. Since the non-maximal prime ideals of $\fH/I_\fH$ are exactly the images under $\fH \onto \fH/I_\fH$ of elements of $\fP$, it is enough to show that that $\Supp_\fH(H^-/I\HDM^-) \cap \fP$ is empty.

Let $\p \in \fP$. By $(5)$ we see that $(\tH^-/I_\fH \tH^-)_\p$ is free of rank $1$ as an $(\fH/I_\fH)_\p$-module. But, since $\fH/I_\fH \isoto \h/I$, $(\HDM^-/H^-)_\p$ is also free of rank $1$ as an $(\fH/I_\fH)_\p$-module. Then the natural surjective map 
$$
(\HDM^-/I\HDM^-)_\p=(\tH^-/I_\fH \tH^-)_\p \onto (\HDM^-/H^-)_\p
$$
must be an isomorphism. This implies that the kernel, $(H^-/I\HDM^-)_\p$, is zero. 
\end{proof}

\section{Pairing with cyclotomic units}\label{pairing section}

In this section, we recall some results from \cite{wake} that will be used in the proof on Theorem \ref{main}.

\subsection{The Kummer pairing} As in \cite[Section 3.2]{wake}, we will make use of a pairing between $\X$ and  global units. Let $E$ denote the pro-$p$ part of the closure of the global units in $\varprojlim (\Z[\zeta_{Np^r}]\otimes \Z_p)^\times$.

There is a pairing of $\Z_p[[\Z_{p,N}^\times]]$-modules
$$
[ \ , \ ]_{\mathrm{Kum}}: E \times \X^\#(1) \to \Z_p[[\Z_{p,N}^\times]].
$$
It is essentially defined as the ``$\Lambda$-adic version'' of the pairing
$$
\Z_p[\zeta_{Np^r}]^\times \times \X \to \mu_{p^r} \ ; \ (u, \sigma) \mapsto \frac{\sigma(u^{1/p^r})}{u^{1/p^r}}.
$$
We refer to \cite[Section 3.2]{wake} for the detailed definition.

\subsubsection{The map $\nu$} The Kummer pairing gives a homomorphism of $\Lambda$-modules $E_\theta \to \Hom(\X_\chii,\Lambda^\#(1))$. There is a special element $\cyc \in E_\theta$, namely the image of $(1-\zeta_{Np^r})_r \in \varprojlim(\Z[\zeta_{Np^r}]^\times \otimes \Z_p)$. 

We define $\nu$ to be the image of $\cyc$ under the Kummer pairing. So
$$
\X_\chii \xrightarrow{\nu} \Lambda^\#(1)
$$
is a morphism of $\LpN_\chii$-modules.

The importance of $\nu$ comes from the following lemma, which relates $\nu$ to $X^+$.

\begin{lem}\label{lem1} There exists a natural commutative diagram of $\LpN$-modules with exact rows:
\[\xymatrix{
0 \ar[r] & U_\chii(-1) \ar[r] \ar[d]^-\wr & \X_\chii(-1) \ar[r] \ar[d]^{\nu(-1)} & X_\chii(-1) \ar[r]  & 0 \\
0 \ar[r] & \Lam^\# \ar[r] & \Lam^\# \ar[r] &  \Lam^\#/\iota \xi_\chii \ar[r] & 0.}\]
Let $\bnu:X_\chii(-1) \to \Lam^\#/\iota \xi_\chii$ be the induced map, and let $C$ denote $\coker(\bnu)$. Then we have an equality of characteristic ideals
$$
\Ch(X_\theta)=\Ch(C^\#).
$$
\end{lem}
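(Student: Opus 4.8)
The plan is to build the diagram from class field theory together with Coleman's theory, and then to extract the characteristic ideal identity from it by a diagram chase combined with the formalism of Iwasawa adjoints.

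\emph{Construction of the diagram.} The top row is, after passing to the $\chii$-eigenspace and twisting by $(-1)$, the standard class field theory exact sequence $0 \to (\overline{U}/\overline{E})_\chii \to \X_\chii \to X_\chii \to 0$, in which $X = \Gal(L/\Q_\infty)$ is the quotient of $\X = \Gal(M/\Q_\infty)$ cut out by the decomposition groups of the primes above $p$, and $\overline{U}$ (resp. $\overline{E}$) denotes the pro-$p$ completion of the semi-local units at $p$ (resp. the closure of the global units); thus $U_\chii = (\overline{U}/\overline{E})_\chii$. Since $\chi$ is odd and, under our hypotheses, $\chii \neq \omega$, the module $\overline{E}_\chii$ vanishes, so $U_\chii$ is just the semi-local unit module, which by Iwasawa's theorem is free of rank one over $\Lambda$; matching twists gives $U_\chii(-1) \cong \Lam^\#$. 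The bottom row is the tautological exact sequence given by multiplication by $\iota\xi_\chii$ on $\Lam^\#$, and the middle vertical map is $\nu(-1)$. For the left square one uses Coleman theory: the restriction of the Kummer pairing with $\cyc$ to the semi-local units is, under the Coleman isomorphism $U_\chii(-1)\cong\Lam^\#$, multiplication by the Coleman power series of $\cyc$, and by the classical computation together with the Iwasawa Main Conjecture (Mazur--Wiles, already invoked) this power series generates $\Ch(X_\chii^\#(1)) = (\xi_\chii)$ up to the operator $\iota$. Hence $\nu(-1)$ carries $U_\chii(-1)$ isomorphically onto $\iota\xi_\chii\Lam^\#$, which yields at once the left vertical isomorphism and the commutativity of the left square, and $\bnu$ is then induced on the cokernels of the two rows.

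\emph{The characteristic ideal identity.} Applying the snake lemma to the diagram, and using that the left vertical map is an isomorphism, one gets $C = \coker(\bnu) \cong \coker(\nu(-1))$ and $\ker(\bnu) \cong \ker(\nu(-1))$; moreover $\ker(\nu(-1)) = \X_\chii(-1)_{\mathrm{tor}}$, since $\nu(-1)$ is a nonzero map into the torsion-free module $\Lam^\#$. I would then apply the Iwasawa adjoint functor $\E^\bullet(-) = \Ext^\bullet_\Lambda(-,\Lambda)$ to the two short exact sequences $0 \to \X_\chii(-1)_{\mathrm{tor}} \to \X_\chii(-1) \to \mathrm{Im}(\nu(-1)) \to 0$ and $0 \to \mathrm{Im}(\nu(-1)) \to \Lam^\# \to C \to 0$ into which $\nu(-1)$ factors, as well as to the top row. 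Using that $\Lam^\#$ is free (so $\E^0(\Lam^\#)\cong\Lam^\#$ and $\E^{i}(\Lam^\#)=0$ for $i>0$), that $\E^0$ of a torsion module vanishes, Proposition \ref{X and X prop} (which gives $\X_\theta \cong \E^1(X_\chii(-1))$), and the fact that commutativity of the left square forces the relevant composite $\Lam^\# \to \Lam^\#$ to be multiplication by $\xi_\chii$ up to a unit, these long exact sequences express $\Ch(C^\#)$ in terms of $(\xi_\chii)$ and $\Ch(\X_\chii(-1)_{\mathrm{tor}})$ and, ultimately, identify it with the characteristic ideal of the kernel of the class field theory map $\Lambda/\xi_\chii \to \X_\theta$ appearing in the exact sequence (\ref{euxxc2}). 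By (\ref{euxxc2}) and multiplicativity of characteristic ideals this kernel has the same characteristic ideal as $X_\theta = \coker(\Lambda/\xi_\chii \to \X_\theta)$, so $\Ch(C^\#) = \Ch(X_\theta)$.

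\emph{Expected main obstacle.} The delicate step is the last one: showing that the connecting map $\Lam^\# \to \X_\theta$ coming out of $\E^\bullet$ of the top row is compatible --- precisely enough to equate characteristic ideals of kernels and cokernels --- with the class field theory map $\Lambda/\xi_\chii \to \X_\theta$ of (\ref{euxxc2}). This compatibility is essentially a form of the Iwasawa Main Conjecture: it says that the cyclotomic unit $\cyc$ ``sees'' the $p$-adic zeta function $\xi_\chii$. The genuinely technical part is keeping careful track of the operators $\iota$ and $\tau^{r}$ --- that is, of which characteristic ideals pick up an $\iota$, and of the Tate twists by $(r)$ --- as one runs through the chain of adjoint long exact sequences.
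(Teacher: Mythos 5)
Your proposal is correct and follows essentially the same route as the paper, whose own proof simply cites \cite[Lemma 4.5]{wake} for the existence of the diagram and \cite[Lemma 4.6 and Proposition 4.8(1)]{wake} for the adjoint computation; your reconstruction of that cited content (class field theory plus Coleman theory and the Main Conjecture for the diagram, then $\E^\bullet$ applied to the rows) is the intended one. The ``delicate step'' you flag is packaged in the paper as the exact sequence $0 \to \E^1(C) \to \Lambda/\xi_\chii \to \X_\theta \to X_\theta \to 0$ imported from \cite{wake}, after which only multiplicativity of $\Ch$ (using $\Ch(\X_\theta)=(\xi_\chii)=\Ch(\Lambda/\xi_\chii)$) and the pseudo-isomorphism $\E^1(C)\sim C^\#$ of \cite[Proposition 5.5.13]{N-S-W} are needed --- exactly the two ingredients your last paragraph invokes.
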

\begin{proof}
The existence of the commutative diagram is from \cite[Lemma 4.5]{wake} (note that the element denoted by $\xi_\chii$ in \cite{wake} would be denoted $\iota \tau \xi_\chii$ here).

Let $C=\coker(\bnu)$. By [{\em loc. cit.}, Proposition 4.8 (1) and Lemma 4.6], there is an exact sequence
$$
0 \to \E^1(C) \to \Lambda/\xi_\chii \to \X_\theta \to X_\theta \to 0.
$$
Since $\Ch(\X_\theta) = (\xi_\chii)$, we have $\Ch(E^1(C))=\Ch(X_\theta)$.  We claim that $\Ch(E^1(C))=\Ch(C^\#)$. This follows from \cite[Proposition 5.5.13, pg. 319]{N-S-W}, as in the proof of Proposition \ref{X and X prop} above.
\end{proof}

\subsubsection{The map $\nu'$} Let $\nu': \X_\chii \to (\Lambda/\xi_\chi)^\#(1)$ be the composite
$$
\X_\chii \xrightarrow{\nu} \Lambda^\#(1) \to  \Lambda^\#(1)/\iota \tau \xi_\chi\Lambda^\#(1) = (\Lambda/ \xi_\chi)^\#(1).
$$

\begin{lem}\label{lem2}
We have that $\coker(\nu')$ is finite if and only if $X_\theta/ \xi_\chi X_\theta$ is finite. Moreover, $\coker(\nu')=0$ if and only if $X_\chi=0$ or $X_\theta=0$.
\end{lem}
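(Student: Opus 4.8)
The plan is to read off both claims from Lemma \ref{lem1} together with the snake lemma applied to the defining construction of $\nu'$. First I would set up the comparison. Recall $\nu' : \X_\chii \to (\Lambda/\xi_\chi)^\#(1)$ is $\nu$ followed by reduction mod $\iota\tau\xi_\chi$, and $\bnu : X_\chii(-1) \to \Lam^\#/\iota\xi_\chii$ is the map induced by $\nu$ on the quotient by $U_\chii(-1)$, as in Lemma \ref{lem1}. So I would compare two reductions of $\nu$: the one along $\Lambda^\#(1) \onto \Lambda^\#(1)/\iota\tau\xi_\chii\Lambda^\#(1) = (\Lambda/\xi_\chii)^\#(1)$, which recovers (a twist of) $\bnu$, and the one along $\Lambda^\#(1) \onto (\Lambda/\xi_\chi)^\#(1)$, which is $\nu'$. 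The key point is that $X^\#_\chii$ is torsion with no finite submodule (Proposition \ref{X and X prop} gives this for $\X_\theta$ and $X_\chii$), so that $U_\chii(-1) \to \Lam^\#$ in Lemma \ref{lem1} is injective with finite cokernel — indeed $\coker$ has characteristic ideal $(\iota\xi_\chii)$ and one checks it has no finite submodule issues the standard way. Hence, up to finite kernel and cokernel, $\nu$ itself is ``multiplication by $\iota\tau\xi_\chii$'' in the appropriate identification, and reducing mod $\iota\tau\xi_\chi$ gives that $\coker(\nu')$ is, up to finite error, $\Lambda/(\xi_\chii, \xi_\chi)$-comparable to $\coker(\bnu)/\xi_\chi = C/\xi_\chi C$, where $C = \coker(\bnu)$.

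For the first claim I would then argue: $\coker(\nu')$ is finite $\iff$ $C/\xi_\chi C$ is finite (the finite errors do not affect finiteness), and by Lemma \ref{lem1}, $\Ch(X_\theta) = \Ch(C^\#)$, so $C$ and $X_\theta$ have the same support in height one, hence $C/\xi_\chi C$ is finite $\iff$ $X_\theta/\xi_\chi X_\theta$ is finite. Here I must be slightly careful: $\Ch(C^\#) = \Ch(X_\theta)$ only gives that $C$ and $X_\theta$ agree up to pseudo-isomorphism, but since finiteness of $M/\xi_\chi M$ for a finitely generated torsion $\Lambda$-module $M$ depends only on whether $\xi_\chi$ is coprime to $\Ch(M)$ (equivalently $\Supp(M) \cap V(\xi_\chi)$ is finite), pseudo-isomorphic modules behave identically, and the $\#$-twist is harmless since $\iota$ is an automorphism of $\Lambda$ fixing height-one primes up to the involution, and $\xi_\chi$'s relevant property is preserved. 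So the first equivalence follows.

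For the second claim, $\coker(\nu') = 0$: tensoring the bottom row construction, $\coker(\nu') = 0 \iff \nu'$ is surjective $\iff$ $\coker(\nu)$ is killed by $\iota\tau\xi_\chi$, i.e. the reduction is onto. Since $\coker(\nu)$ (before taking $\bnu$, i.e. including the $U$-part) is finite iff $C$ is finite iff $X_\theta$ is finite by Lemma \ref{lem1} — and in fact $\coker(\bnu) = C = 0 \iff X_\theta = 0$ by the exact sequence $0 \to \E^1(C) \to \Lambda/\xi_\chii \to \X_\theta \to X_\theta \to 0$ combined with the fact, from the exact sequence (\ref{euxxc2}) and the discussion after it, that $X_\theta = 0$ iff the map $\Lambda/\xi_\chii \to \X_\theta$ is an isomorphism iff $\E^1(C) = 0$ iff $C = 0$ (as $C$ has no nonzero finite submodule being a submodule of $\Lam^\#/\iota\xi_\chii$ — wait, it is a quotient, so I would instead use that $C$ pseudo-null and $\E^1(C)=0$ forces $C$ finite, then $\E^1(C)=0$ for finite $C$ forces $C=0$). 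Then: if $X_\chi = 0$ then $\xi_\chi$ is a unit, so $(\Lambda/\xi_\chi)^\#(1) = 0$ and $\coker(\nu') = 0$ trivially; if $X_\theta = 0$ then $C = 0$, so $\bnu$ is onto, so $\nu'$ is onto. Conversely if $\coker(\nu') = 0$ and $\xi_\chi$ is not a unit (i.e. $X_\chi \ne 0$), then surjectivity of $\nu'$ forces, via Nakayama over $\Lambda/\xi_\chi$, that $C/\xi_\chi C = 0$; since $\xi_\chi$ is not a unit this is a nontrivial condition, and combined with $\Ch(C^\#) = \Ch(X_\theta)$ — if $X_\theta \ne 0$ then $\xi_\chii$-type considerations show $C \ne 0$ and $C/\xi_\chi C$ can only vanish if $\xi_\chi$ is a unit on $\Supp(C)$... the cleanest route is: $C/\xi_\chi C = 0$ with $C$ finitely generated torsion implies $\Supp(C) \subseteq V(\xi_\chi)$ has no... I would instead directly show $C/\xi_\chi C = 0 \Rightarrow C = 0$ using that $C$ is a torsion $\Lambda$-module and $\xi_\chi$ is a non-unit non-zero-divisor, so $C/\xi_\chi C = 0$ forces $C = \xi_\chi C = \xi_\chi^2 C = \cdots$, and by Krull intersection over the local ring $\Lambda$, $C = 0$. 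Hence $X_\theta = 0$.

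The main obstacle will be bookkeeping the twists and the $\iota$, $\tau$ operators correctly so that ``reduce $\nu$ mod $\iota\tau\xi_\chi$'' genuinely matches the definition of $\nu'$ and is comparable with the map $\bnu$ of Lemma \ref{lem1} (which is a reduction mod $\iota\xi_\chii$, a different element), and ensuring the finiteness/pseudo-null arguments are applied to modules known to have no nonzero finite submodule where needed; the Krull-intersection step for the ``$\coker(\nu') = 0 \iff X_\chi = 0$ or $X_\theta = 0$'' direction is routine once $C/\xi_\chi C = 0$ is established.
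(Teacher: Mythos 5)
Your first equivalence follows the paper's own route: identify $\coker(\nu')$ (up to twist) with $C/\xi_\chi C$ where $C=\coker(\bnu)$, and then use that finiteness of $M\otimes_\Lambda N$ depends only on $\Ch(N)$ (Lemma \ref{lambda lem}) together with $\Ch(C^\#)=\Ch(X_\theta)$ from Lemma \ref{lem1}. That part is sound, although note that the left-hand vertical arrow $U_\chii(-1)\to\Lambda^\#$ in Lemma \ref{lem1} is an isomorphism, not merely an injection with finite cokernel as you assert (and its cokernel certainly does not have characteristic ideal $(\iota\xi_\chii)$ --- you seem to have conflated it with the bottom horizontal map). For the finiteness statement this slippage is harmless, but for the exact vanishing statement it is not: there you need the genuine identification $\coker(\nu(-1))\cong C$, since a nonzero finite discrepancy would survive reduction mod $\xi_\chi$ by Nakayama.

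The real gap is in the ``moreover'' part, specifically in proving $C=0\iff X_\theta=0$. All the inputs you invoke --- the equality $\Ch(C^\#)=\Ch(X_\theta)$ and the exact sequence $0\to\E^1(C)\to\Lambda/\xi_\chii\to\X_\theta\to X_\theta\to0$ --- control $C$ and $X_\theta$ only up to finite modules, so they can prove ``$C$ finite $\iff X_\theta$ finite'' but not the vanishing equivalence. Your chain ``$X_\theta=0$ iff $\Lambda/\xi_\chii\to\X_\theta$ is an isomorphism iff $\E^1(C)=0$ iff $C=0$'' breaks at two points: $\E^1(C)=0$ is equivalent only to \emph{injectivity} of $\Lambda/\xi_\chii\to\X_\theta$, which yields only that $X_\theta$ is finite (surjectivity is the condition equivalent to $X_\theta=0$); and for a finite module $C$ over the two-dimensional regular local ring $\Lambda$ one has $\E^0(C)=\E^1(C)=0$ automatically, since only $\E^2$ detects finite-length modules, so $\E^1(C)=0$ does not force $C=0$. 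Your Nakayama/Krull step ($C/\xi_\chi C=0\Rightarrow C=0$ when $\xi_\chi$ is a non-unit) is fine, but it only reduces the problem to $C=0\iff X_\theta=0$, which is exactly what remains unproved. The paper closes this by citing \cite[Proposition 4.8(2)]{wake}, which upgrades the pseudo-isomorphism between $C^\#$ and $X_\theta$ to an actual isomorphism once $X_\theta$ is finite; some such finer input is indispensable, and your proposal does not supply a substitute.
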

\begin{proof}
Let $C = \coker(\bnu)$. From Lemma \ref{lem1}, we see that $C \cong \coker(\nu(-1))$. We see from the definition of $\nu'$ that 
$$
\coker(\nu') \cong C(1)/\tau^{-1} \iota \xi_\chi C(1) = (C/\iota \xi_\chi C)(1).
$$
Note that since $(C/ \iota \xi_\chi C)^\# \cong C^\#/ \xi_\chi C^\#$, we have that $\coker(\nu')$ is finite if and only if $C^\#/ \xi_\chi C^\#$ is finite.

Recall from Lemma \ref{lem1} that  $\Ch(C^\#)= \Ch(X_\theta)$. We apply Lemma \ref{lambda lem} to the case $M=\Lambda/\xi_\chi$, $N=C^\#$, and $N'=X_\theta$ to get that $C^\#/ \xi_\chi C^\#$ is finite if and only if $X_\theta/ \xi_\chi X_\theta$ is finite. This completes the proof of the first statement.

For the second statement, notice that if $X_\chi=0$, then $\xi_\chi$ is a unit, so $\coker(\nu')=0$. If $X_\chi \ne 0$, then $\xi_\chi$ is not a unit and, by Nakayama's lemma, $\coker(\nu')=0$ if and only if $C=0$. It remains to prove that $C=0$ if and only if $X_\theta=0$, and this follows from \cite{wake}. Indeed, if $X_\theta=0$, then \cite[Proposition 4.8 (2)]{wake} implies that $C=0$. Conversely, if $C=0$, then \cite[Proposition 4.8 (1) and Corollary 4.7]{wake} together imply that $X_\theta$ is finite, and then we can apply \cite[Proposition 4.8 (2)]{wake} to conclude that $X_\theta=0$.
\end{proof}

\subsection{The map $\nu$ as an extension class}\label{up and theta} In \cite[Section 9.6]{kato-fukaya}, Fukaya and Kato give an interpretation of $\nu$ as an extension class. We review this here, and refer to \cite[Section 2.3]{wake} for more details.

There is an exact sequence
\begin{equation}\label{exten}
0 \to H^+/IH^+ \to \HDM/K \to \HDM/H \to 0
\end{equation}
where $K$ is the kernel of the natural map $H \to H^+/IH^+$  (and so $K \cong H^- \oplus IH^+$ as $\h$-modules). It can be shown that $H \to H^+/IH^+$ respects the $G_\Q$-action (\cite{sharifi}, cf. \cite[Proposition 6.3.2]{kato-fukaya}), and so (\ref{exten}) is an extension of $\HDM/H$ by $H^+/IH^+$ as $\h[G_\Q]$-modules. By considering this extension as a Galois cocycle, we obtain a homomorphism of $\LpN_\chii$-modules
$$
\Theta: \X_{\chii} \to (\Lambda/ \xi_\chi)^\#(1).
$$
By \cite[Theorem 9.6.3]{kato-fukaya}, we have the following (cf. \cite[Proposition 3.4]{wake}).

\begin{thm}\label{nu=theta thm}
We have $\nu'=\Theta$.
\end{thm}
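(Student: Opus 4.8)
\emph{Proof proposal.} Both $\nu'$ and $\Theta$ are homomorphisms $\X_\chii \to (\Lambda/\xi_\chi)^\#(1)$, and each is built from a $1$-cocycle of $G_\Q$; the natural plan is to unwind the two definitions into explicit cocycle representatives in (a reduction mod $\xi_\chi$ of) the relevant Galois cohomology and check that they agree. On the $\nu'$ side, recall that $\nu$ is the image of $\cyc = (1-\zeta_{Np^r})_r \in E_\theta$ under the map $E_\theta \to \Hom(\X_\chii,\Lambda^\#(1))$ coming from the Kummer pairing; using the description of $[\ ,\ ]_{\mathrm{Kum}}$ as the $\Lambda$-adic avatar of $(u,\sigma)\mapsto \sigma(u^{1/p^r})/u^{1/p^r}$, the pairing against $\cyc$ is, by construction, the cup product with (the inverse limit of) the Kummer classes of the cyclotomic units $1-\zeta_{Np^r}$. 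Hence $\nu'$ is represented by the reduction modulo $\xi_\chi$ of this Kummer cocycle attached to $\cyc$, after the twist $\#(1)$.

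On the $\Theta$ side, $\Theta$ is by definition the class of the extension (\ref{exten}) of $\h[G_\Q]$-modules, once one uses the identifications $H^+/IH^+\cong \h/I\cong \Lambda/\xi_\chi$ and $\HDM/H\cong \Lambda/\xi_\chi$. As a cocycle, $\Theta(\sigma)$ is $(\sigma-1)$ applied to a lift to $\HDM/K$ of the canonical generator of $\HDM/H$, i.e.\ to the Drinfeld--Manin element $\zinf$. So the whole content of the theorem is that the cocycle $\sigma \mapsto (\sigma-1)\zinf \in H^+/IH^+$ equals the cyclotomic-unit Kummer cocycle above. This is the geometric heart of the matter: one has to use Ohta's description of $\tH$ together with the behaviour of the cohomology of the modular curves near the cusps (the $q$-expansion principle, the explicit constant terms of the $\Lambda$-adic Eisenstein series that generate $\fH/\I$, and the canonical cusp $\zinf$), from which the failure of $\zinf$ to be $G_\Q$-fixed is computed to be, up to the standard normalizations, the Kummer cocycle of $(1-\zeta_{Np^r})_r$. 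This explicit computation is exactly what \cite[Theorem 9.6.3]{kato-fukaya} supplies; \cite[Proposition 3.4]{wake} then packages it into the identity $\nu'=\Theta$ in the present notation, so I would reduce to those two results.

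The main obstacle is this last comparison and, more mundanely, the bookkeeping around it: one must track the $G_\Q$-action on $\tH$ through Ohta's theory and the Drinfeld--Manin splitting, and keep careful account of the involution $\iota$, the twist $\tau$, the shift $(1)$, and the distinction between $\xi_\chi$ and $\iota\tau\xi_\chi$ (cf.\ Remark \ref{defn of xi rem}), so that the two cocycles agree on the nose and not merely up to a unit or an automorphism of $(\Lambda/\xi_\chi)^\#(1)$. Once the conventions are pinned down, both sides are visibly the reduction mod $\xi_\chi$ of the Kummer cocycle of the cyclotomic units, and the equality $\nu'=\Theta$ follows.
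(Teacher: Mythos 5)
Your proposal is correct and takes essentially the same route as the paper: the paper gives no independent argument for this identity but simply imports it from \cite[Theorem 9.6.3]{kato-fukaya} together with \cite[Proposition 3.4]{wake}, which is exactly the reduction you arrive at. Your surrounding discussion of the two cocycle descriptions and of the normalization issues ($\iota$, $\tau$, the twist, and $\xi_\chi$ versus $\iota\tau\xi_\chi$) is accurate context, but the substance of the proof in both cases is the citation.
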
 

\section{Relationship between the Hecke and Iwasawa sides}\label{proof section}

The goal of this section is to complete the proof of Theorem \ref{main}. 

\subsection{The key diagram} First we consider a commutative diagram coming from the maps of Section \ref{pairing section}. Let $\nu'' = (\nu')^\#(1): \X_\chii^\#(1) \to \Lambda/\xi_\chi$ so that $\nu''$ is a map of $\Lambda$-modules. Then we have the diagram of $\Lambda$-modules:
\begin{equation*}
\xymatrix{
\X_\chii^\#(1) \otLam X_\chi(1) \ar[r]^-{\nu'' \otimes 1} \ar[d]^-{\Phi} & \Lambda/\xi_\chi \otLam X_\chi(1) \ar[d]^-\U \\
H^-/IH^- \ar@{=}[r]  & H^-/IH^-,
}
\end{equation*}
where $\Phi = \Theta^\#(1) \otimes \U$. It is commutative by Theorem \ref{nu=theta thm}.  This is a slight reformulation of the diagram (*) in \cite[Section 1.3]{wake}. We record the result of applying the Snake Lemma to this diagram as a lemma.

\begin{lem}\label{sequence lem} There is an exact sequence:
\begin{equation*}
\ker(\Phi) \to \ker(\Upsilon) \to  \coker(\nu'')\otLam X_\chi(1) \to \coker(\Phi) \to \coker(\U) \to 0.
\end{equation*}
\end{lem}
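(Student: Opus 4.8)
The plan is to read off the asserted six-term sequence from the Snake Lemma, applied to a morphism of two-row diagrams built from the key diagram. All that the proof will use about the key diagram is that it commutes, i.e. that $\U \circ (\nu'' \otimes 1) = \Phi$; this is Theorem \ref{nu=theta thm}.

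First I would record the formal point that, since $- \otLam X_\chi(1)$ is right exact, $\coker(\nu'' \otimes 1)$ is canonically $\coker(\nu'') \otLam X_\chi(1)$, so it suffices to produce an exact sequence with $\coker(\nu'' \otimes 1)$ as its middle term. Then I would complete the commutative square to the diagram with vertical maps $\Phi$, $\U$, and the zero map $\coker(\nu'' \otimes 1) \to 0$, whose top row is the canonical right-exact presentation $\X_\chii^\#(1) \otLam X_\chi(1) \xrightarrow{\nu'' \otimes 1} \Lambda/\xi_\chi \otLam X_\chi(1) \to \coker(\nu'' \otimes 1) \to 0$ and whose bottom row is the left-exact sequence $0 \to H^-/IH^- \xrightarrow{\mathrm{id}} H^-/IH^- \to 0$. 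The left square commutes by Theorem \ref{nu=theta thm}, and the right square commutes because its lower-right corner is $0$, so the Snake Lemma applies. Its conclusion is the six-term sequence $\ker(\Phi) \to \ker(\U) \to \ker(0) \to \coker(\Phi) \to \coker(\U) \to \coker(0)$ associated to the three vertical maps; since the right-hand vertical map $\coker(\nu'' \otimes 1) \to 0$ has kernel $\coker(\nu'' \otimes 1)$ and cokernel $0$, this is exactly the sequence in the statement.

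I do not anticipate a genuine obstacle: the lemma is a formal consequence of the Snake Lemma once the diagram is in place. The only point meriting a sentence of care is the identification of the connecting map $\ker(\U) \to \coker(\nu'' \otimes 1)$ with the obvious map — the projection $\Lambda/\xi_\chi \otLam X_\chi(1) \to \coker(\nu'' \otimes 1)$ restricted to $\ker(\U)$ — which is immediate from the construction of the connecting homomorphism, the bottom row being split. One could alternatively bypass the Snake Lemma entirely and write down the five maps directly, checking the four exactness assertions by a short diagram chase using only the relation $\U \circ (\nu'' \otimes 1) = \Phi$.
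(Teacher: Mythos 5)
Your proposal is correct and is exactly the paper's argument: the lemma is stated there as the result of applying the Snake Lemma to the key diagram, and your two-row setup (with $\coker(\nu''\otimes 1)\cong\coker(\nu'')\otLam X_\chi(1)$ by right-exactness of the tensor product) is the standard way to make that precise.
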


\subsubsection{Some lemmas} We prove some lemmas.

\begin{lem}\label{coker phi lem}
We have $\coker(\Phi)=H^-/I\HDM^-$.
\end{lem}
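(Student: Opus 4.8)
\textbf{Proof proposal for Lemma \ref{coker phi lem}.}

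The plan is to unwind the definition $\Phi = \Theta^\#(1) \otimes \Upsilon$ and show that the cokernel of this map equals the cokernel of the map $I\HDM^- \hookrightarrow H^-$ described in Section \ref{hecke section}, which is exactly $H^-/I\HDM^-$. First I would use Theorem \ref{nu=theta thm} to replace $\Theta$ by $\nu'$ throughout, so that $\nu'' = (\nu')^\#(1) = \Theta^\#(1)$ and the map $\Phi$ is literally the composite $\X_\chii^\#(1) \otLam X_\chi(1) \xrightarrow{\nu'' \otimes 1} \Lambda/\xi_\chi \otLam X_\chi(1) \xrightarrow{\Upsilon} H^-/IH^-$. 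Since $\Upsilon$ in the second column of the key diagram is the map $\Lambda/\xi_\chi \otLam X_\chi(1) \to H^-/IH^-$ induced by Sharifi's $\Upsilon \colon X_\chi(1) \to H^-/IH^-$ together with the identification $\h/I \cong \Lambda/\xi_\chi$, and $H^-/IH^-$ is a module over $\h/I = \Lambda/\xi_\chi$, the composite $\Phi$ has the same image as the $\Lambda/\xi_\chi$-linear extension of $\nu''$-then-$\Upsilon$.

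The key input is the extension-class description of $\nu'$ from Section \ref{up and theta}: the exact sequence (\ref{exten}) presents $\HDM/K$ as an extension of $\HDM/H$ by $H^+/IH^+$, and reading off the connecting/cocycle data recovers $\Theta = \nu'$. Concretely, tensoring (\ref{exten}) with the Galois-cohomological data that produces $\Theta$, one sees that the image of $\Phi$ inside $H^-/IH^-$ is precisely the image of $I\HDM^-$ under the inclusion $I\HDM^- \subset H^-$ (recall from the start of Section \ref{hecke section} that $\HDM^-/H^- \cong \h/I$, so $I\HDM^- \subseteq H^-$). Hence $\coker(\Phi) = H^-/I\HDM^-$. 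The cleanest route to identifying the image is to compare the key diagram directly with the snake-lemma setup for the short exact sequence $0 \to H^- \to \HDM^- \to \Lambda/\xi_\chi \to 0$ tensored appropriately: the map $\Phi$ is, up to the identifications already in place, the composite $\X_\chii^\#(1) \otLam X_\chi(1) \to \HDM^-/I\HDM^- \to H^-/I\HDM^-$ whose image is exactly $I\HDM^-/I\HDM^-$'s complement, i.e.\ all of the part of $H^-$ generated over $\h$ by $I\HDM^-$. I would also invoke Proposition \ref{hecke prop} (item (2)) implicitly to align normalizations: the element $\zinf$ generating $\HDM^-/H^-$ is the image of the relevant extension class, so the cyclotomic-unit element $\cyc$ on the Iwasawa side maps exactly onto it.

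The main obstacle I anticipate is purely bookkeeping: carefully tracking the four twists and the $\#$-operation through all the identifications ($\h/I \cong \Lambda/\xi_\chi$, $H^- \cong \h^\vee$, $\HDM^- = \tH^- \otimes_\fH \h$, and the $\iota\tau\xi_\chi$ versus $\xi_\chi$ discrepancies flagged in Remark \ref{defn of xi rem}) so that ``the image of $\Phi$'' is matched on the nose with ``$I\HDM^-$ inside $H^-$'', rather than merely up to a unit or a twist. There is no serious homological difficulty — the content is entirely in Theorem \ref{nu=theta thm} and the exact sequence (\ref{exten}) — but one must be scrupulous about which map is ``$\otimes 1$'' over which ring. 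Once the identifications are pinned down, the equality $\coker(\Phi) = H^-/I\HDM^-$ is immediate from the definition of $I\HDM^-$ as the image of $I\HDM^- \hookrightarrow H^-$.
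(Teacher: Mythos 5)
Your overall strategy is the same as the paper's: reduce the lemma to the assertion that the image of $\Phi$ in $H^-/IH^-$ is exactly the image of $I\HDM^-$, and then observe that the quotient by this image is $H^-/I\HDM^-$. The second half of that reduction you handle correctly (since $\zinf$ generates $\HDM^-/H^- \cong \h/I$, one has $I\HDM^- = IH^- + I\zinf$, so identifying the image of $\Phi$ with $I\zinf$ suffices). The problem is the first half: the identification of the image of $\Phi$ with $I\zinf$ is precisely \cite[Proposition 2.2]{wake}, and it is the entire content of the lemma. You assert it with ``tensoring (\ref{exten}) with the Galois-cohomological data that produces $\Theta$, one sees that\dots'' and later ``whose image is exactly $I\HDM^-/I\HDM^-$'s complement,'' neither of which is an argument (the latter does not even parse as a statement about submodules). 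Theorem \ref{nu=theta thm} only tells you that $\nu' = \Theta$, i.e.\ that the key diagram commutes; it does not by itself compute the image of $\Phi = \Theta^\#(1)\otimes\U$. That computation requires unwinding the cocycle attached to the extension (\ref{exten}) — concretely, relating the elements $\Theta^\#(1)(x)\cdot\U(y)$ to the elements $(\sigma-1)\zinf$ that generate $I\zinf$ — and this is a genuine calculation carried out in \cite{wake}, not a formal consequence of the definitions.

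So the gap is concrete: you need either to cite \cite[Proposition 2.2]{wake} (as the paper does) or to actually perform the cocycle computation showing $\mathrm{im}(\Phi) = I\zinf$. A secondary, smaller inaccuracy: you say ``the cyclotomic-unit element $\cyc$ on the Iwasawa side maps exactly onto [$\zinf$],'' but $\Phi$ takes values in $H^-/IH^-$ and $\zinf \notin H^-$, so nothing maps onto $\zinf$ itself; the correct statement is about the submodule $I\zinf \subset H^-$. The appeal to Proposition \ref{hecke prop}(2) is also circular in spirit here, since that proposition is downstream of this lemma in the logic of the paper.
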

\begin{proof}
This is a slight reformulation of \cite[Proposition 2.2]{wake}, which states that the image of $\Phi$ is $I \zinf$. Since $\zinf$ generates $\HDM^-/H^-$, we see that the image of $I \zinf$ in  $H^-/IH^-$ is the same as the image of $I\HDM^-$ in $H^-/IH^-$.
\end{proof}

\begin{lem}\label{lem4}
We have that $\coker(\nu'') \otLam X_\chi(1)$ is finite if and only if $X_\theta/ \xi_\chi X_\theta$ is finite.
\end{lem}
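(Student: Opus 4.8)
The plan is to reduce the assertion to the finiteness of $\coker(\nu'')$ alone and then to quote Lemma~\ref{lem2}. First I would handle the degenerate case: if $X_\chi=0$, then $\xi_\chi$ is a unit by Remark~\ref{remark}, so $\Lambda/\xi_\chi=0$; hence $\coker(\nu'')=0$ and $X_\theta/\xi_\chi X_\theta=0$, and both conditions hold. So assume $X_\chi\ne 0$; then $\xi_\chi$ is a non-unit and $X_\chi(1)$ is a nonzero finitely generated torsion $\Lambda$-module with $\Ch(X_\chi(1))=(\xi_\chi)=\Ch(\Lambda/\xi_\chi)$.

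The crux is the reduction step. Since $\nu''$ takes values in $\Lambda/\xi_\chi$, its cokernel is a quotient of $\Lambda/\xi_\chi$, hence is annihilated by $\xi_\chi$, so $\coker(\nu'')\otLam(\Lambda/\xi_\chi)=\coker(\nu'')$. I would then apply Lemma~\ref{lambda lem} with $M=\coker(\nu'')$, $N=X_\chi(1)$ and $N'=\Lambda/\xi_\chi$ --- legitimate because $\Ch(N)=\Ch(N')$ --- to conclude that $\coker(\nu'')\otLam X_\chi(1)$ is finite if and only if $\coker(\nu'')$ is finite. Finally, since $M\mapsto M^\#$ and $M\mapsto M(r)$ are exact and preserve the underlying abelian group, and $\nu''=(\nu')^\#(1)$ by definition, we have $\coker(\nu'')=(\coker(\nu'))^\#(1)$, which has the same cardinality as $\coker(\nu')$; by Lemma~\ref{lem2} this is finite if and only if $X_\theta/\xi_\chi X_\theta$ is finite. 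Combining the two equivalences gives the claim.

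If one does not wish to invoke Lemma~\ref{lambda lem}, the reduction step can be done directly, and this is the one point that is not pure bookkeeping. Write $\coker(\nu'')=\Lambda/J$ with $(\xi_\chi)\subseteq J$, so $\coker(\nu'')\otLam X_\chi(1)=X_\chi(1)/JX_\chi(1)$. If $\coker(\nu'')$ is finite then so is this, being a quotient of a finite direct sum of copies of $\coker(\nu'')$ (as $X_\chi(1)$ is finitely generated). Conversely, suppose $\Lambda/J$ is infinite. Since $\Lambda\cong\sO[[T]]$ is a $2$-dimensional regular local domain, in particular a UFD, and $\xi_\chi\in J$ is nonzero, $J$ has height $1$ and is contained in a principal height-one prime $\p=(\pi)$ with $\pi\mid\xi_\chi$. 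Since $\Ch(X_\chi(1))=(\xi_\chi)$, the localization $X_\chi(1)_\p$ is nonzero; as $\Lambda_\p$ is a DVR whose maximal ideal contains $J\Lambda_\p$, Nakayama's lemma forces $(X_\chi(1)/JX_\chi(1))_\p\ne 0$, so $X_\chi(1)/JX_\chi(1)$, whose support contains the height-one prime $\p$, is not finite. Hence $\coker(\nu'')\otLam X_\chi(1)$ is finite if and only if $\coker(\nu'')$ is finite.

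The main obstacle is exactly this reduction: one must use that the characteristic ideal of $X_\chi(1)$ is precisely $(\xi_\chi)$, so that $X_\chi(1)$ detects every height-one prime dividing $\xi_\chi$; with weaker control on $\Supp(X_\chi(1))$, an infinite $\coker(\nu'')$ could have a finite tensor product with $X_\chi(1)$. Everything else --- the degenerate case, the twisting functors, and the appeal to Lemma~\ref{lem2} --- is routine.
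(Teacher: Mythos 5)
Your proposal is correct and follows essentially the same route as the paper: reduce to the finiteness of $\coker(\nu'')$ via the appendix lemma on characteristic ideals (the paper invokes Lemma \ref{lambda lem1} directly with $M=\coker(\nu'')$, $N=X_\chi(1)$, using that $\xi_\chi$ annihilates $\coker(\nu'')$, which is the same content as your application of Lemma \ref{lambda lem} with $N'=\Lambda/\xi_\chi$), and then conclude by Lemma \ref{lem2}. Your separate treatment of the case $X_\chi=0$ and your hands-on version of the reduction are unnecessary but harmless.
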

\begin{proof} Let $C'' = \coker(\nu'')$. Since $\nu''=(\nu')^\#(1)$, it is clear that $C''$ is finite if and only if $\coker(\nu')$ is finite.
By Lemma \ref{lem2} it suffices to show that $C'' \otLam X_\chi(1)$ is finite if and only if $C''$ is finite. 

Now apply Lemma \ref{lambda lem1} to $M=C''$ and $N=X_\chi(1)$ to get that $C'' \otLam X_\chi(1)$ is finite if and only if $C''/\Ch(X_\chi(1)) C''$ is finite. However,  $\Ch(X_\chi(1))=(\xi_\chi)$, which annihilates $C''$. So $C''=C''/\Ch(X_\chi(1)) C''$ and the lemma follows.
\end{proof}

\begin{prop}\label{sha prop}
Suppose that $\coker(\U)$ is finite. Then $\Upsilon$ is injective.
\end{prop}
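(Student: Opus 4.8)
The plan is to show that the hypothesis forces $\ker(\U)$ to be a \emph{finite} $\Lambda$-module, and then to invoke the fact that $X_\chi(1)$ has no nonzero finite $\Lambda$-submodule, which makes $\ker(\U)=0$.

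First dispose of the trivial case $X_\chi=0$ via Remark~\ref{remark} (the source and target of $\U$ both vanish), so assume $X_\chi\neq 0$; then $\xi_\chi$ is a non-unit of $\Lambda$. Two structural facts are needed. \emph{(i)} $X_\chi(1)$ is a finitely generated torsion $\Lambda$-module with $\Ch(X_\chi(1))=(\xi_\chi)$, and — since $\chi$ is odd, so that $X_\chi$ lies in the minus part of the unramified Iwasawa module — it has no nonzero finite $\Lambda$-submodule, by the argument proving Proposition~\ref{X and X prop}. A finitely generated torsion $\Lambda$-module with no nonzero finite submodule embeds in its elementary module, which is annihilated by its characteristic power series; hence $X_\chi(1)$ is annihilated by $\xi_\chi$, and moreover every finite submodule of $X_\chi(1)$ is zero. \emph{(ii)} $H^-/IH^-$ is annihilated by $\xi_\chi$, hence torsion: indeed $\h/I\isoto\Lambda/\xi_\chi$ forces $\xi_\chi\in I$, and $H^-\cong\h^\vee$ then gives $\xi_\chi(H^-/IH^-)=0$.

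Now apply multiplicativity of characteristic ideals to the short exact sequences of torsion $\Lambda$-modules $0\to\ker(\U)\to X_\chi(1)\to\mathrm{im}(\U)\to 0$ and $0\to\mathrm{im}(\U)\to H^-/IH^-\to\coker(\U)\to 0$. Since $\coker(\U)$ is finite, $\Ch(\coker(\U))=\Lambda$, so
\[
\Ch(\ker(\U))\cdot\Ch(H^-/IH^-)=\Ch(X_\chi(1))=(\xi_\chi).
\]
By fact \emph{(i)} it therefore suffices to prove $\Ch(H^-/IH^-)=(\xi_\chi)$: then $\Ch(\ker(\U))=\Lambda$, so $\ker(\U)$ is finite, hence zero. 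The displayed identity already gives $\Ch(H^-/IH^-)\mid(\xi_\chi)$, so the remaining task is the reverse divisibility $\xi_\chi\mid\Ch(H^-/IH^-)$, i.e. that $\mathrm{length}_{\Lambda_\p}\big((H^-/IH^-)_\p\big)\ge\mathrm{length}_{\Lambda_\p}\big((\h/I)_\p\big)$ for every height-one prime $\p$ of $\Lambda$.

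This inequality is the step I expect to be the main obstacle; it should come from the Eichler--Shimura isomorphism together with a little commutative algebra. Because $\h$ is finite free over the regular local ring $\Lambda$ it is Cohen--Macaulay of dimension $2$, and under $H^-\cong\h^\vee=\Hom_\Lambda(\h,\Lambda)$ the module $H^-$ is a dualizing module of $\h$. Localizing at a height-one prime $\p\supseteq(\xi_\chi)$ yields the one-dimensional Cohen--Macaulay ring $\h_\p$ with dualizing module $(H^-)_\p$, and since $\xi_\chi\in I$ is a nonzerodivisor there, cutting by $\xi_\chi$ reduces the claim to the following statement about an Artinian local ring $\bar A$ with dualizing module $\omega_{\bar A}$ and an ideal $\bar J$: $\mathrm{length}(\omega_{\bar A}/\bar J\omega_{\bar A})\ge\mathrm{length}(\bar A/\bar J)$. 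By Matlis duality over $\bar A$ one has $\omega_{\bar A}/\bar J\omega_{\bar A}\cong\big((0:_{\bar A}\bar J)\big)^{\vee}$, so this is the inequality $\mathrm{length}(0:_{\bar A}\bar J)\ge\mathrm{length}(\bar A/\bar J)$, valid for any Artinian local ring (with equality in the Gorenstein situation — which is why one obtains an \emph{equality} $\Ch(H^-/IH^-)=(\xi_\chi)$ only after combining it with the hypothesis on $\coker(\U)$). Feeding this back through the displayed identity completes the proof.
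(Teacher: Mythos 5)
Your overall architecture is sound and, in substance, parallels the paper's: the paper also deduces injectivity from the multiplicativity of characteristic ideals together with the fact that $X_\chi(1)$ has no nonzero finite submodule (its Lemma \ref{fitt lem}), and the whole weight of the argument rests on the single divisibility $\xi_\chi \mid \Ch(H^-/IH^-)$, equivalently $\Ft(H^-/IH^-)\subset(\xi_\chi)$. The paper simply quotes this as a known fact from \cite[Section 7.1.3]{kato-fukaya}; you try to prove it, and the step you yourself flagged as the main obstacle is where the argument breaks.

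The Matlis-duality identification $\mathrm{length}(\omega_{\bar A}/\bar J\omega_{\bar A})=\mathrm{length}(0:_{\bar A}\bar J)$ is fine, but the inequality $\mathrm{length}(0:_{\bar A}\bar J)\ge\mathrm{length}(\bar A/\bar J)$ is \emph{not} valid for an arbitrary Artinian local ring. Take $\bar A=k\oplus V_1\oplus V_2$ with $\dim V_1=4$, $\dim V_2=2$, $\mathfrak{m}=V_1\oplus V_2$, $\mathfrak{m}^3=0$, and multiplication $V_1\times V_1\to V_2$ given by a symmetric bilinear map $B=(B_1,B_2)$ chosen so that the four functionals $B_1(\cdot,e_1),B_1(\cdot,e_2),B_2(\cdot,e_1),B_2(\cdot,e_2)$ are linearly independent on $V_1$ (this is easily arranged). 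For $\bar J=\langle e_1,e_2\rangle\oplus V_2$ one gets $(0:_{\bar A}\bar J)=V_2$ of length $2$, while $\bar A/\bar J$ has length $3$. So the purely Artinian statement you invoke is false, and your proof of the reverse divisibility has a genuine gap. The inequality you need is nevertheless true in the situation at hand, but for a different reason: $H^-\cong\h^\vee$ is a \emph{faithful} $\h$-module, so $\Ft_{\h}(H^-)\subseteq\mathrm{Ann}_{\h}(H^-)=0$; base change of Fitting ideals then gives $\Ft_{\h/I}(H^-/IH^-)=0$, and since $(\h/I)_\p\cong(\Lambda/\xi_\chi)_\p$ is a quotient of the discrete valuation ring $\Lambda_\p$, the structure theorem for modules over $\Lambda_\p/(\xi_\chi)$ turns the vanishing of the Fitting ideal into exactly the length bound $\mathrm{length}_{\Lambda_\p}((H^-/IH^-)_\p)\ge\mathrm{length}_{\Lambda_\p}((\h/I)_\p)$. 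With that repair (or with a direct citation of the containment $\Ft(H^-/IH^-)\subset(\xi_\chi)$, as the paper does), the rest of your argument goes through.
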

\begin{proof}
It is well-known that 
$$
\Ft(H^-/IH^-) \subset (\xi_\chi)
$$
(cf. \cite[Section 7.1.3]{kato-fukaya}). 

We apply Lemma \ref{fitt lem} to the case $M=X_\chi(1)$, $N=H^-/IH^-$ and $f=\Upsilon$. It says that if $\coker(\U)$ is finite, then $\ker(\Upsilon)$ is finite. But $X_\chi(1)$ has no finite submodule, so the result follows.
\end{proof}

\subsubsection{The proof of Theorem \ref{main}} We can now prove Theorem \ref{main}, which we restate here for convenience. 

\begin{main}
Both  $\coker(\U)$ and $X_\theta/ \xi_\chi X_\theta$ are finite if and only if $\fH$ is weakly Gorenstein.
\end{main}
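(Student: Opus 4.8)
The plan is to read off the equivalence from the exact sequence of Lemma~\ref{sequence lem} together with the identifications established in the preceding lemmas. Recall that by Proposition~\ref{hecke prop}, $\fH$ is weakly Gorenstein if and only if $H^-/I\HDM^-$ is finite, and by Lemma~\ref{coker phi lem} this module equals $\coker(\Phi)$. So the whole theorem amounts to: $\coker(\Phi)$ is finite $\iff$ $\coker(\U)$ and $X_\theta/\xi_\chi X_\theta$ are both finite.

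For the forward direction, suppose $\coker(\Phi)$ is finite. From the tail of the exact sequence in Lemma~\ref{sequence lem},
$$
\coker(\Phi) \to \coker(\U) \to 0,
$$
we immediately get that $\coker(\U)$ is finite, being a quotient of a finite module. To get finiteness of $X_\theta/\xi_\chi X_\theta$, I would argue as follows: since $\coker(\U)$ is now known to be finite, Proposition~\ref{sha prop} gives that $\Upsilon$ is injective, so $\ker(\Upsilon)=0$. Feeding this into Lemma~\ref{sequence lem} produces a short exact sequence
$$
0 \to \coker(\nu'') \otLam X_\chi(1) \to \coker(\Phi) \to \coker(\U) \to 0,
$$
and since both outer-ish terms on the right are finite --- more precisely $\coker(\nu'')\otLam X_\chi(1)$ injects into the finite module $\coker(\Phi)$ --- we conclude $\coker(\nu'')\otLam X_\chi(1)$ is finite. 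By Lemma~\ref{lem4}, this is equivalent to $X_\theta/\xi_\chi X_\theta$ being finite, as desired.

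For the converse, suppose $\coker(\U)$ and $X_\theta/\xi_\chi X_\theta$ are both finite. By Lemma~\ref{lem4}, finiteness of the latter gives that $\coker(\nu'')\otLam X_\chi(1)$ is finite. Since $\coker(\U)$ is finite, Proposition~\ref{sha prop} again gives $\ker(\Upsilon)=0$, so the exact sequence of Lemma~\ref{sequence lem} reads
$$
0 \to \coker(\nu'')\otLam X_\chi(1) \to \coker(\Phi) \to \coker(\U) \to 0,
$$
exhibiting $\coker(\Phi)$ as an extension of a finite module by a finite module, hence finite. By Lemma~\ref{coker phi lem} and Proposition~\ref{hecke prop}, $\fH$ is weakly Gorenstein.

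The main point to be careful about --- the only place the argument is not purely formal --- is the use of Proposition~\ref{sha prop} to kill $\ker(\Upsilon)$ in both directions; this is what lets the five-term sequence of the Snake Lemma collapse to a three-term short exact sequence, and it is essential for extracting finiteness of $\coker(\nu'')\otLam X_\chi(1)$ from finiteness of $\coker(\Phi)$ in the forward direction. I do not anticipate a genuine obstacle: everything else is bookkeeping with finite modules and the already-established dictionary (Lemmas~\ref{sequence lem},~\ref{coker phi lem},~\ref{lem4} and Proposition~\ref{hecke prop}). One should double-check that Remark~\ref{remark} handles the degenerate case $X_\chi=0$ separately, so that we may assume $X_\chi \ne 0$ (hence $\xi_\chi$ is a non-unit) wherever Nakayama-type reasoning is invoked.
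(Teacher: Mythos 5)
Your proposal is correct and follows essentially the same route as the paper: Proposition \ref{hecke prop} plus Lemma \ref{coker phi lem} reduce everything to finiteness of $\coker(\Phi)$, and the five-term sequence of Lemma \ref{sequence lem}, collapsed via Proposition \ref{sha prop} and translated by Lemma \ref{lem4}, does the rest. The only (harmless) difference is that in the direction ``(2)+(3) $\Rightarrow$ (1)'' the paper does not invoke Proposition \ref{sha prop} at all, since exactness at $\coker(\Phi)$ already squeezes it between a quotient of $\coker(\nu'')\otLam X_\chi(1)$ and $\coker(\U)$ without needing $\ker(\Upsilon)=0$.
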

\begin{proof}
First assume that $\coker(\U)$ and $X_\theta/ \xi_\chi X_\theta$ are finite. By Lemma \ref{lem4} we have that $\coker(\nu'') \otLam X_\chi(1)$ is finite. By Lemma \ref{sequence lem}, we see that $\coker(\Phi)$ is finite. By Lemma \ref{coker phi lem}, $H^-/I\HDM^-$ is finite. By Proposition \ref{hecke prop}, we have that $\fH$ is weakly Gorenstein.

Now assume that $\fH$ is weakly Gorenstein. Then, as above, $\coker(\Phi)$ is finite. From Lemma \ref{sequence lem}, we see that $\coker(\Upsilon)$ is finite. By Proposition \ref{sha prop}, we see that $\ker(\Upsilon)=0$. Again using Lemma \ref{sequence lem} we see that $\coker(\nu'') \otLam X_\chi(1) \subset \coker(\Phi)$, and so $\coker(\nu'') \otLam X_\chi(1)$ is finite. By Lemma \ref{lem4}, we have that $X_\theta/ \xi_\chi X_\theta$ is finite.
\end{proof}

\section{Application to Sharifi's Conjecture}\label{application}

 For a finitely generated $\Lambda$-module $M$, let $$d_{\m_\Lambda}(M)=\dim_{\Lambda/\m_\Lambda}(M/\m_\Lambda M).$$
Note that, by Nakayama's lemma, $d_{\m_\Lambda}(M)$ is minimal number of generators of $M$. In particular,  $d_{\m_\Lambda}(M)=0$ if and only if $M=0$.

 We can now prove Theorem \ref{application cor}, which we restate here for convenience.

\begin{app}
Assume that $X_\theta \ne 0$ and that $\h$ is weakly Gorenstein.

Then we have 
$$d_{\m_\Lambda}(H^-/I\HDM^- ) \ge d_{\m_\Lambda}(X_\chi(1))$$
with equality if and only if $\U$ is an isomorphism.

If, in addition, $\# (X_\theta)= \# (\Lambda/\m_\Lambda)$, then $\U$ is an isomorphism if and only if 
$$
\#(H^-/I\HDM^-)=\# (\Lambda/\m_\Lambda)^{d_{\m_\Lambda}(X_\chi(1))}.
$$
\end{app}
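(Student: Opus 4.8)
The plan is to run the same exact sequence from Lemma~\ref{sequence lem} as in the proof of Theorem~\ref{main}, but now keeping track of $d_{\m_\Lambda}$ (and, in the last part, cardinalities) rather than merely finiteness. First I would observe that under the hypothesis that $\h$ is weakly Gorenstein, Theorem~\ref{fk h weak thm} gives that $\U$ is an injective pseudo-isomorphism, so in particular $\ker(\Upsilon)=0$ and $\coker(\U)$ is finite. Feeding $\coker(\U)$ finite into Lemma~\ref{sequence lem} together with $\ker(\Upsilon)=0$, the exact sequence collapses to
\[
0 \to \coker(\nu'')\otLam X_\chi(1) \to \coker(\Phi) \to \coker(\U) \to 0,
\]
and by Lemma~\ref{coker phi lem} we have $\coker(\Phi)=H^-/I\HDM^-$. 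Since $X_\theta\ne 0$, Lemma~\ref{lem2} (via Lemma~\ref{lem4}) tells us $\coker(\nu'')\otLam X_\chi(1)$ is nonzero unless $X_\chi=0$; but if $X_\chi=0$ then $\U$ is trivially an isomorphism between zero modules and there is nothing to prove, so we may assume $X_\chi\ne 0$ as well.

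Next I would compute $d_{\m_\Lambda}$ of the left-hand term. The key point is that $\coker(\nu'')=C''$ is a cyclic $\Lambda$-module — indeed $C'' = \coker(\nu')^\#(1)$ and $\nu'$ has source $\X_\chii$, whose cokernel modulo $\xi_\chi$ is a quotient of $\Lambda/\xi_\chi$, hence cyclic; more precisely, from the diagram in Lemma~\ref{lem1}, $C \cong \coker(\nu(-1))$ is a quotient of $\Lam^\#/\iota\xi_\chii$, so $C''$ is a quotient of a cyclic module and is cyclic. Therefore $C''\otLam X_\chi(1) \cong X_\chi(1)/\Ft_\Lambda(C'')X_\chi(1)$ is a quotient of $X_\chi(1)$, giving $d_{\m_\Lambda}(C''\otLam X_\chi(1)) \le d_{\m_\Lambda}(X_\chi(1))$. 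Using the assumption $X_\theta \ne 0$, I would then argue that in fact the surjection $X_\chi(1)\onto C''\otLam X_\chi(1)$ is an isomorphism modulo $\m_\Lambda$: since $\Ch(C''^\#)=\Ch(X_\theta)$ and $X_\theta\ne 0$ means $C''\ne 0$, but we need the stronger statement that $C''$ surjects onto $\Lambda/\m_\Lambda$, i.e.\ that $C''\ne 0$, which forces $\Ft_\Lambda(C'')\subset\m_\Lambda$ because $C''$ is a nonzero torsion (even finite-length after inverting $\xi_\chi$) module — here I should be careful and instead use that $C''$ is annihilated by $\xi_\chi$ and nonzero, so $\Ft_\Lambda(C'')\subset \m_\Lambda$, whence $C''\otimes\Lambda/\m_\Lambda \cong X_\chi(1)\otimes\Lambda/\m_\Lambda$ and $d_{\m_\Lambda}(C''\otLam X_\chi(1))=d_{\m_\Lambda}(X_\chi(1))$. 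Combining with the short exact sequence above and the fact that $\coker(\U)$ is finite (so a quotient of $\Lambda/\m_\Lambda$-dimension at most... actually of $d_{\m_\Lambda}$ possibly positive), right-exactness of $-\otimes\Lambda/\m_\Lambda$ gives $d_{\m_\Lambda}(H^-/I\HDM^-) \ge d_{\m_\Lambda}(X_\chi(1))$, with equality forcing $\coker(\U)\otimes\Lambda/\m_\Lambda = 0$, i.e.\ $\coker(\U)=0$ by Nakayama; conversely if $\U$ is an isomorphism the sequence gives $H^-/I\HDM^- \cong C''\otLam X_\chi(1)$ and equality of minimal generators. This proves the first part.

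For the second part, I would pass from minimal generators to cardinalities, now working under the extra hypothesis $\#(X_\theta)=\#(\Lambda/\m_\Lambda)$, which says $X_\theta$ is itself isomorphic to $\Lambda/\m_\Lambda$ (a length-one module). Since $\Ch(C''^\#)=\Ch(X_\theta)=\Lambda/\m_\Lambda$ has trivial... wait, $\Ch$ of a finite module is $(1)$; rather I should use Lemma~\ref{lambda lem} / Lemma~\ref{lem2}-type reasoning to pin down $\#(C'')$. The cleanest route: the hypothesis and Lemma~\ref{lem2} force $C''$ to have length one (use that $C''$ is cyclic, annihilated by $\xi_\chi$, with $\coker$ of $\bnu$ controlled by $X_\theta$), so $\#(C''\otLam X_\chi(1)) = \#(X_\chi(1)/\m_\Lambda X_\chi(1)) = \#(\Lambda/\m_\Lambda)^{d_{\m_\Lambda}(X_\chi(1))}$. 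Then the short exact sequence gives
\[
\#(H^-/I\HDM^-) = \#(C''\otLam X_\chi(1)) \cdot \#(\coker(\U)) = \#(\Lambda/\m_\Lambda)^{d_{\m_\Lambda}(X_\chi(1))} \cdot \#(\coker(\U)),
\]
and by Proposition~\ref{sha prop} $\U$ is injective, so $\coker(\U)=0$ iff $\U$ is an isomorphism iff the displayed product equals $\#(\Lambda/\m_\Lambda)^{d_{\m_\Lambda}(X_\chi(1))}$, as claimed.

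**Main obstacle.** The delicate point is the computation of $C'' = \coker(\nu'')$ — both that it is cyclic and, in the second part, that it has length exactly one under the hypothesis $\#(X_\theta) = \#(\Lambda/\m_\Lambda)$. The cyclicity should follow formally from the presentation in Lemma~\ref{lem1}, but controlling the \emph{length} of $C''$ precisely (as opposed to just its characteristic ideal or finiteness) requires squeezing the exact sequences of \cite{wake} relating $C$, $X_\theta$, and $\Lambda/\xi_\chii$ a little harder than they were used in Lemma~\ref{lem2}; one needs that passing from $C$ to $C/\iota\xi_\chi C$ and then tensoring with $X_\chi(1)$ does not lose or gain length, which is where the hypotheses $X_\theta\ne 0$ and $\#(X_\theta)=\#(\Lambda/\m_\Lambda)$ are doing real work. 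I would isolate this as a separate lemma (in the style of the appendix lemmas \ref{lambda lem}, \ref{lambda lem1}, \ref{fin lem}) before assembling the proof.
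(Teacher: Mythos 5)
Your setup is the same as the paper's: injectivity of $\U$ via Proposition \ref{sha prop}, the collapsed sequence
$0 \to \coker(\nu'')\otLam X_\chi(1) \to \coker(\Phi) \to \coker(\U) \to 0$ from Lemma \ref{sequence lem}, the identification $\coker(\Phi)=H^-/I\HDM^-$, and the observation that $\coker(\nu'')$ is cyclic and nonzero so that $\coker(\nu'')\otLam X_\chi(1)$ has the same minimal number of generators as $X_\chi(1)$. But there is a genuine gap at the central step of the first part. From a short exact sequence $0\to A\to B\to C\to 0$, right-exactness of $-\otimes_\Lambda \Lambda/\m_\Lambda$ gives $d_{\m_\Lambda}(B)\ge d_{\m_\Lambda}(C)$ and $d_{\m_\Lambda}(B)\le d_{\m_\Lambda}(A)+d_{\m_\Lambda}(C)$, but \emph{not} $d_{\m_\Lambda}(B)\ge d_{\m_\Lambda}(A)$: the map $A/\m_\Lambda A\to B/\m_\Lambda B$ need not be injective (e.g.\ $0\to\m_\Lambda\to\Lambda\to\Lambda/\m_\Lambda\to 0$ has $d_{\m_\Lambda}(\m_\Lambda)=2>1=d_{\m_\Lambda}(\Lambda)$). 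So your derivation of the inequality $d_{\m_\Lambda}(H^-/I\HDM^-)\ge d_{\m_\Lambda}(X_\chi(1))$, and likewise the claim that equality forces $\coker(\U)=0$, does not follow from the sequence alone. The missing idea is that the sequence \emph{splits}: Theorem \ref{fk h weak thm} identifies $\coker(\U)$ with the torsion submodule $(tor)$ of $H^-/IH^-$, and the composite $(tor)\hookrightarrow H^-/IH^-\onto\coker(\Phi)\to\coker(\U)$ is the identity, so $H^-/I\HDM^-\cong(\coker(\nu'')\otLam X_\chi(1))\oplus\coker(\U)$ and $d_{\m_\Lambda}$ becomes additive. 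This is how the paper gets both the inequality and the equality criterion.

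The second part of your argument is essentially sound, since cardinality (unlike $d_{\m_\Lambda}$) is genuinely multiplicative on short exact sequences of finite modules; the point you flag as the main obstacle --- that $\coker(\nu'')$ has length exactly one when $\#(X_\theta)=\#(\Lambda/\m_\Lambda)$ --- is resolved in the paper by quoting \cite[Proposition 4.8]{wake}, which gives $\coker(\bnu)^\#\cong X_\theta\cong\Lambda/\m_\Lambda$, whence $\coker(\nu'')\cong\coker(\bnu)^\#/\xi_\chi\coker(\bnu)^\#$ is $\Lambda/\m_\Lambda$ (or $0$ if $X_\chi=0$). So that step is fillable as you anticipate; the splitting is the real omission.
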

\begin{proof}
By Theorem \ref{fk h weak thm}, we have that $\coker(\U)$ is finite. By Proposition \ref{sha prop}, we have $\ker(\U)=0$. By Lemma \ref{sequence lem}, we have an exact sequence
$$
0 \to \coker(\nu'')\otLam X_\chi(1) \to \coker(\Phi) \to \coker(\Upsilon) \to 0.
$$
Theorem \ref{fk h weak thm} implies that $\coker(\Upsilon) \isoto (tor) \to \coker(\Phi)$ gives a spitting of this sequence. This gives us an isomorphism
$$
H^-/I\HDM^- = \coker(\Phi) \cong (\coker(\nu'')\otLam X_\chi(1)) \oplus \coker(\Upsilon).
$$
and so
\begin{align*}
d_{\m_\Lambda}(H^-/I\HDM^-) & =  d_{\m_\Lambda}(\coker(\nu'')\otLam X_\chi(1)) + d_{\m_\Lambda}(\coker(\Upsilon)) \\
			   & =  d_{\m_\Lambda}(\coker(\nu''))d_{\m_\Lambda}(X_\chi(1)) + d_{\m_\Lambda}(\coker(\Upsilon)).
\end{align*}

We claim that, in fact,
$$d_{\m_\Lambda}(H^-/I\HDM^-)=d_{\m_\Lambda}(X_\chi(1)) +d_{\m_\Lambda}(\coker(\Upsilon)),$$
from which the first statement of the theorem follows.

To prove the claim, note that it is clear if $d_{\m_\Lambda}(X_\chi(1))=0$. Now assume $d_{\m_\Lambda}(X_\chi(1)) \ne 0$. Then we claim that $d_{\m_\Lambda}(\coker(\nu''))=1.$ Indeed, since $\coker(\nu'')$ is cyclic it suffices to show $\coker(\nu'') \ne 0$. But since $X_\chi(1) \ne 0$, Lemma \ref{lem2} implies that $\coker(\nu'') \ne 0$ if and only if $X_\theta \ne 0$, which we are assuming. This completes the proof of the claim and of the first statement.

For the second statement, notice that the assumption can only occur if $X_\theta \cong \Lambda/\m_\Lambda$. By \cite[Proposition 4.8]{wake}, this implies that $\coker(\bnu)^\# \cong  \Lambda/\m_\Lambda$. As in Lemma \ref{lem2}, where we computed $\coker(\nu')$ in terms of $\coker(\bnu)$, we compute
$$
\coker(\nu'') \cong \coker(\bnu)^\#/ \xi_\chi \coker(\bnu)^\#,
$$
so 
$$
\coker(\nu'') \cong \left \{
\begin{array}{cc}
\Lambda/\m_\Lambda & \text{ if } X_\chi(1) \ne 0 \\
0 & \text{ if } X_\chi(1) = 0.
\end{array}
\right.
$$
In either case,
$$
\coker(\nu'') \otLam X_\chi(1) \cong (\Lambda/\m_\Lambda)^{d_{\m_{\Lambda}}(X_\chi(1))},
$$
and the statement follows from the established isomorphism
$$
H^-/I\HDM^- \cong (\coker(\nu'')\otLam X_\chi(1)) \oplus \coker(\Upsilon).
$$
\end{proof} 

\appendix
\section{Some commutative algebra}\label{lambda section}
We review some lemmas from commutative algebra that are used in the body of the paper. The results of this appendix are well-known; we include them for completeness.

\subsection{Finite modules} We begin with a review of some generalities about finite modules (meaning modules of finite cardinality). Let $(A,\m)$ be a noetherian local ring, and assume that the residue field $A/\m$ is finite. For an $A$-module $M$, we use the notation $\Supp_A(M)$ for the set $\{ \p \in \Spec(A) \ | \ M_\p \ne 0 \}$.

\begin{lem}\label{fin lem}
Let $M$ be a finitely generated $A$-module. The following are equivalent:
\abcs
\item $\m^n M = 0$ for some $n$.
\item $M$ is finite.
\item $M$ is an Artinian $A$-module.
\item $\Supp_A(M) \subset \{\m\}$.
\endabcs
\end{lem}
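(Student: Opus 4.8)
The plan is to prove the cycle of implications $(1) \implies (2) \implies (3) \implies (4) \implies (1)$, each step being a standard piece of commutative algebra.

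For $(1) \implies (2)$: if $\m^n M = 0$, then $M$ is a finitely generated module over the quotient ring $A/\m^n$, so it suffices to show that $A/\m^n$ is a finite ring. I would use the filtration $A/\m^n \supset \m/\m^n \supset \cdots \supset \m^{n-1}/\m^n \supset 0$, whose successive quotients are the $\m^i/\m^{i+1}$. Each $\m^i$ is a finitely generated ideal since $A$ is Noetherian, so each $\m^i/\m^{i+1}$ is a finitely generated $A/\m$-module, hence finite because $A/\m$ is finite by hypothesis. Thus $A/\m^n$ is finite, and a finitely generated module over it — being a quotient of some $(A/\m^n)^k$ — is finite.

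For $(2) \implies (3)$ there is nothing to do: a finite module has only finitely many submodules, so every descending chain stabilizes. For $(3) \implies (4)$, I would invoke the existence, for a finitely generated module over a Noetherian ring, of a finite filtration with successive quotients of the form $A/\p$ with $\p \in \Supp_A(M)$; if $M$ is Artinian then each such $A/\p$ is an Artinian domain, hence a field, so each $\p$ is maximal, and since $A$ is local this forces $\p = \m$, giving $\Supp_A(M) \subseteq \{\m\}$. For $(4) \implies (1)$, I would use that $\Supp_A(M) = V(\mathrm{Ann}_A(M))$ for finitely generated $M$; the hypothesis then says $\sqrt{\mathrm{Ann}_A(M)} = \m$ (or $\mathrm{Ann}_A(M) = A$, whence $M = 0$). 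Since $A$ is Noetherian, $\m$ has finitely many generators, each with a power in $\mathrm{Ann}_A(M)$, so $\m^n \subseteq \mathrm{Ann}_A(M)$ for $n$ large, i.e. $\m^n M = 0$.

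No step is genuinely difficult; the only place where both standing hypotheses on $A$ (Noetherian, finite residue field) are simultaneously essential is $(1) \implies (2)$, where Noetherianity controls the graded pieces $\m^i/\m^{i+1}$ and finiteness of $A/\m$ makes them finite. The remaining implications are formal, needing only the Noetherian hypothesis for prime filtrations and for the finite generation of $\m$.
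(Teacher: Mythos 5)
Your proof is correct. The route differs slightly from the paper's: the paper proves the cycle $(1) \Rightarrow (2) \Rightarrow (3) \Rightarrow (1) \Rightarrow (4)$ (all declared clear) together with the one substantive implication $(4) \Rightarrow (1)$, which it handles by reducing to a cyclic module $A/I$ and observing that $\Spec(A/I) \subset \{\m\}$ forces $A/I$ to be Artinian with nilpotent maximal ideal. You instead close the loop as $(1) \Rightarrow (2) \Rightarrow (3) \Rightarrow (4) \Rightarrow (1)$, which obliges you to prove $(3) \Rightarrow (4)$ directly; your prime-filtration argument for that step is fine (each quotient $A/\p_i$ is a subquotient of an Artinian module, hence an Artinian domain, hence a field, so $\p_i = \m$ and $\Supp_A(M) = \bigcup_i V(\p_i) \subset \{\m\}$). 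Your $(4) \Rightarrow (1)$ via $\Supp_A(M) = V(\mathrm{Ann}_A(M))$ and $\m^n \subset \mathrm{Ann}_A(M)$ is the same underlying fact as the paper's, phrased with annihilators rather than a reduction to $A/I$; the paper's alternative $(3) \Rightarrow (1)$ (stabilization of the chain $\m^n M$ plus Nakayama) is marginally cheaper than your filtration argument but proves no more. Your spelled-out $(1) \Rightarrow (2)$, isolating exactly where Noetherianity and the finiteness of $A/\m$ enter, is more detail than the paper records and is the right argument.
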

\begin{proof}
For $(4) \implies (1)$, since $M$ is finitely generated, it is enough to prove the case where $M \cong A/I$ for an ideal $I$. By (4) we have that $\Spec(A/I) \subset \{\m\}$. This implies that $A/I$ is Artinian, which implies that $\m^n (A/I) = 0$ for some $n$. The implications $(1) \implies (2) \implies (3) \implies (1) \implies (4)$ are clear.
\end{proof}

\begin{cor}\label{tprod cor}
Suppose $M$ and $N$ are finitely generated $A$-modules. Then $M \otimes_A N$ is finite if and only if $\Supp_A(N) \cap \Supp_A(M) \subset \{\m\}$.
\end{cor}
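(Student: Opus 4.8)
The plan is to reduce the statement to Lemma \ref{fin lem} by computing the support of the tensor product. First I would observe that since $A$ is noetherian and $M, N$ are finitely generated, the module $M \otimes_A N$ is again finitely generated; hence, by Lemma \ref{fin lem}, $M \otimes_A N$ is finite if and only if $\Supp_A(M \otimes_A N) \subset \{\m\}$. So the whole problem comes down to the identity
$$
\Supp_A(M \otimes_A N) = \Supp_A(M) \cap \Supp_A(N).
$$

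To prove this identity I would work prime by prime. Since localization commutes with tensor products, for $\p \in \Spec(A)$ we have $(M \otimes_A N)_\p \cong M_\p \otimes_{A_\p} N_\p$. If $\p \notin \Supp_A(M) \cap \Supp_A(N)$, then $M_\p = 0$ or $N_\p = 0$, and the localized tensor product vanishes, so $\p \notin \Supp_A(M \otimes_A N)$. Conversely, suppose $\p \in \Supp_A(M) \cap \Supp_A(N)$. Then $M_\p$ and $N_\p$ are nonzero finitely generated modules over the noetherian local ring $A_\p$, so by Nakayama's lemma the quotients $M_\p/\p A_\p M_\p$ and $N_\p/\p A_\p N_\p$ are nonzero vector spaces over the residue field $\kappa(\p)$. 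Their tensor product over $\kappa(\p)$ is therefore nonzero, and since it is a quotient of $M_\p \otimes_{A_\p} N_\p$, the latter is nonzero as well; thus $\p \in \Supp_A(M \otimes_A N)$.

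Combining the two displayed facts finishes the argument: $M \otimes_A N$ is finite if and only if $\Supp_A(M \otimes_A N) = \Supp_A(M) \cap \Supp_A(N) \subset \{\m\}$, which is exactly the assertion of the corollary. I do not anticipate a genuine obstacle here; the only mildly delicate point is the nonvanishing of $M_\p \otimes_{A_\p} N_\p$ in the support computation, and that is handled by the routine Nakayama argument over the local ring $A_\p$ sketched above.
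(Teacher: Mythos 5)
Your argument is correct and follows the same route as the paper: reduce to Lemma \ref{fin lem} via the identity $\Supp_A(M \otimes_A N) = \Supp_A(M) \cap \Supp_A(N)$, which the paper simply cites as known and you prove in full (correctly, via localization and the standard Nakayama argument at a prime in both supports).
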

\begin{proof}
This is clear from Lemma \ref{fin lem} since $$\Supp_A(M \otimes_A N) = \Supp_A(N) \cap \Supp_A(M).$$
\end{proof}

\subsection{$\Lambda$-modules} Let $\Lambda$ be a noetherian regular local ring of dimension 2 with finite residue field. For example, $\Lam = \sO[[T]]$ where $\sO$ is the valuation ring of a finite extension of $\Q_p$.

\subsubsection{Characteristic ideals} For a finitely generated torsion $\Lambda$-module $M$, define the characteristic ideal of $M$ to be
$$
\Ch(M)=\prod_\p \p^{l_\p(M)}
$$
where $\p$ ranges over all height 1 primes of $\Lambda$, and $l_\p(M)$ is the length of $M_\p$ as a $\Lambda_\p$ module. Note that $l_\p(M) >0 $ if and only if $\p \in \Supp_\Lambda(M)$.

It follows from the definition that $\Ch$ is multiplicative on exact sequences, and that $\Ch(M)$ is a principal ideal. By Lemma \ref{fin lem}, $\Ch(M)=\Lambda$ if and only if $\Lambda/\Ch(M)$ is finite if and only if $M$ is finite. We have the following consequence of Corollary \ref{tprod cor}.

\begin{lem}\label{lambda lem1}
Let $N$ and $M$ be a finitely generated $\Lambda$-modules and suppose that $N$ is torsion. Then $M \otimes_\Lambda N$ is finite if and only if $M/\Ch(N)M$ is finite.
\end{lem}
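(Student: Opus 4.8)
The plan is to deduce this directly from Corollary \ref{tprod cor} by comparing the supports of $N$ and of $\Lambda/\Ch(N)$. First I would rewrite $M/\Ch(N)M$ as $M \otimes_\Lambda (\Lambda/\Ch(N))$, so that both modules in question, $M \otimes_\Lambda N$ and $M \otimes_\Lambda (\Lambda/\Ch(N))$, are of the form handled by Corollary \ref{tprod cor}. Thus $M \otimes_\Lambda N$ is finite if and only if $\Supp_\Lambda(N) \cap \Supp_\Lambda(M) \subset \{\m\}$, and $M/\Ch(N)M$ is finite if and only if $\Supp_\Lambda(\Lambda/\Ch(N)) \cap \Supp_\Lambda(M) \subset \{\m\}$. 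So it suffices to show that these two conditions on $\Supp_\Lambda(M)$ are equivalent, for which it is enough to show that $\Supp_\Lambda(N)$ and $\Supp_\Lambda(\Lambda/\Ch(N))$ contain exactly the same primes of height $1$.

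The key point is that $\Lambda$ is a regular local ring of dimension $2$, so every prime has height $0$, $1$, or $2$; the unique height $0$ prime is $(0)$ and the unique height $2$ prime is $\m$. Since $N$ is torsion, $(0) \notin \Supp_\Lambda(N)$, so $\Supp_\Lambda(N)$ consists of $\m$ (possibly) together with some height $1$ primes. By the very definition of $\Ch(N) = \prod_\p \p^{l_\p(N)}$, a height $1$ prime $\p$ satisfies $\p \supset \Ch(N)$ if and only if $l_\p(N) > 0$, i.e. if and only if $\p \in \Supp_\Lambda(N)$; and $\p \supset \Ch(N)$ is precisely the condition $\p \in \Supp_\Lambda(\Lambda/\Ch(N))$. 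Hence the height $1$ primes in $\Supp_\Lambda(N)$ and in $\Supp_\Lambda(\Lambda/\Ch(N))$ coincide.

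The two supports may differ only in whether they contain $\m$, but this is irrelevant for the conditions ``$\cap \Supp_\Lambda(M) \subset \{\m\}$'', which only see the behaviour at non-maximal primes. Therefore $\Supp_\Lambda(N) \cap \Supp_\Lambda(M) \subset \{\m\}$ if and only if $\Supp_\Lambda(\Lambda/\Ch(N)) \cap \Supp_\Lambda(M) \subset \{\m\}$, and combining with the two applications of Corollary \ref{tprod cor} gives the claim. There is no real obstacle here beyond bookkeeping; the only thing to be careful about is the degenerate case where $N$ (equivalently $\Lambda/\Ch(N)$, equivalently $\Ch(N) = \Lambda$) is itself finite, in which case both supports lie in $\{\m\}$ and both tensor products are finite automatically, consistent with the statement.
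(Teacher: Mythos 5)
Your argument is correct and follows the same route as the paper, which simply cites Corollary \ref{tprod cor} together with the identity of supports $\Supp_\Lambda(N)=\Supp_\Lambda(\Lambda/\Ch(N))$. You are in fact slightly more careful than the paper in noting that the two supports need only agree at height $1$ primes (they can differ at $\m$, e.g.\ when $N$ is finite and nonzero), which is all the criterion of Corollary \ref{tprod cor} requires.
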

\begin{proof}
This is clear from Corollary \ref{tprod cor} since $\Supp_\Lambda(N)=\Supp_\Lambda(\Lambda / \Ch(N))$.
\end{proof}

In the body of the paper, we often use Lemma \ref{lambda lem1} in the following form.

\begin{lem}\label{lambda lem}
Let $N$, $N'$, and $M$ be a finitely generated $\Lambda$-modules and suppose that $N$ and $N'$ are torsion and that $\Ch(N)=\Ch(N')$. Then $M \otimes_\Lambda N$ is finite if and only if $M \otimes_\Lambda N'$ is finite.
\end{lem}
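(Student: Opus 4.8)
The plan is to deduce this immediately from Lemma \ref{lambda lem1}. That lemma says that for a finitely generated torsion $\Lambda$-module $N$ and a finitely generated $\Lambda$-module $M$, the tensor product $M \otimes_\Lambda N$ is finite if and only if $M/\Ch(N)M$ is finite. Applying this once with $N$ and once with $N'$, we see that $M \otimes_\Lambda N$ is finite $\iff$ $M/\Ch(N)M$ is finite, and $M \otimes_\Lambda N'$ is finite $\iff$ $M/\Ch(N')M$ is finite. Since the hypothesis gives $\Ch(N) = \Ch(N')$ as ideals of $\Lambda$, the two right-hand conditions are literally the same statement, so the two left-hand conditions are equivalent. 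This is the entire argument.

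Alternatively, one could bypass the quotient reformulation and argue directly from Corollary \ref{tprod cor} together with the observation (used in the proof of Lemma \ref{lambda lem1}) that $\Supp_\Lambda(N) = \Supp_\Lambda(\Lambda/\Ch(N))$; since $\Ch(N) = \Ch(N')$ forces $\Supp_\Lambda(N) = \Supp_\Lambda(N')$, the conditions $\Supp_\Lambda(M) \cap \Supp_\Lambda(N) \subset \{\m_\Lambda\}$ and $\Supp_\Lambda(M) \cap \Supp_\Lambda(N') \subset \{\m_\Lambda\}$ coincide, and Corollary \ref{tprod cor} finishes. Either way there is no genuine obstacle: all the content sits in Lemma \ref{lambda lem1} (and behind it in Corollary \ref{tprod cor} and the standard identity $\Supp_\Lambda(M\otimes_\Lambda N) = \Supp_\Lambda(M)\cap\Supp_\Lambda(N)$ for finitely generated modules). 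The present lemma is merely the convenient pseudo-isomorphism-invariant packaging of that fact that gets used repeatedly in Section \ref{pairing section} and Section \ref{proof section}.
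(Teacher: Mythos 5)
Your first argument is exactly the paper's (implicit) proof: the paper states this lemma immediately after Lemma \ref{lambda lem1} with no separate proof, precisely because applying that lemma to $N$ and to $N'$ reduces both finiteness conditions to the single statement that $M/\Ch(N)M$ is finite. The proposal is correct and matches the paper's approach.
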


\subsubsection{Fitting ideals} Let $R$ be a commutative, noetherian ring. For a finitely generated $R$-module $M$, we define $\mathrm{Fitt}_R(M) \subset R$, the Fitting ideal of $M$, as follows. Let 
$$
R^m \xrightarrow{A} R^n \to M \to 0
$$
be a presentation of $M$. Then $\mathrm{Fitt}_R(M)$ is defined to be the $R$-module generated by all the $(n,n)$-minors of the matrix $A$. This does not depend on the choice of resolution (cf. \cite[Appendix]{mazur-wiles}). 

The following lemma follows from the independence of resolution.
\begin{lem}\label{basic fitt lem}
If $\phi: R \to R'$ is a ring homomorphism, and $M$ is an $R$-module, then $$\mathrm{Fitt}_{R'}(M \otimes_R R') \subset R'$$ is the ideal generated by $\phi (\mathrm{Fitt}_R(M)).$
\end{lem}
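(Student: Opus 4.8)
The plan is to compute both Fitting ideals from a single presentation, using the independence of the choice of presentation recorded in \cite[Appendix]{mazur-wiles}. Since $R$ is noetherian and $M$ is finitely generated, $M$ is finitely presented; choose a presentation
$$
R^m \xrightarrow{A} R^n \to M \to 0.
$$
Tensoring with $R'$ over $R$ and using that $-\otimes_R R'$ is right exact, we obtain an exact sequence
$$
(R')^m \xrightarrow{A'} (R')^n \to M \otimes_R R' \to 0,
$$
where $A'$ is the matrix whose entries are the images under $\phi$ of the entries of $A$. In particular $M\otimes_R R'$ is a finitely generated $R'$-module, so $\mathrm{Fitt}_{R'}(M\otimes_R R')$ is defined, and it may be computed from this presentation.

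Next I would compare the minors. By definition (and independence of resolution), $\mathrm{Fitt}_R(M)$ is generated by the $(n,n)$-minors of $A$, and $\mathrm{Fitt}_{R'}(M\otimes_R R')$ is generated by the $(n,n)$-minors of $A'$. The determinant of an $n\times n$ matrix is a polynomial in its entries with integer coefficients, and $\phi$ is a ring homomorphism, so each $(n,n)$-minor of $A'$ is the image under $\phi$ of the corresponding $(n,n)$-minor of $A$. Hence a generating set for $\mathrm{Fitt}_{R'}(M\otimes_R R')$ is obtained by applying $\phi$ to a generating set for $\mathrm{Fitt}_R(M)$, which is exactly the assertion that $\mathrm{Fitt}_{R'}(M\otimes_R R')$ is the ideal of $R'$ generated by $\phi(\mathrm{Fitt}_R(M))$. (If $m<n$ there are no such minors, and both Fitting ideals are zero, consistent with the statement.)

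I do not expect any genuine obstacle: the only points needing care are that a presentation of $M$ base-changes to a presentation of $M\otimes_R R'$, and that taking $(n,n)$-minors commutes with applying $\phi$ entrywise, both of which are formal. The appeal to independence of presentation is needed only to know that the Fitting ideals do not depend on the chosen presentations.
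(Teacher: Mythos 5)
Your proof is correct and is exactly the argument the paper intends: the paper's entire "proof" is the one-line remark that the lemma follows from independence of the presentation, and your write-up simply spells out the standard details (right-exactness of $-\otimes_R R'$ gives a presentation of $M\otimes_R R'$ with matrix $\phi(A)$, and minors commute with $\phi$). No gaps.
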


We consider the case $R=\Lambda$. The following relation to $\Ch$ is a well-known.

\begin{lem}\label{wk lem}
If $M$ is finitely generated and torsion, then $\Ch(M)$ is the unique principal ideal such that $\Ft(M) \subset \Ch(M)$ and $\Ch(M)/\Ft(M)$ is finite. 
\end{lem}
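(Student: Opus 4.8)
The plan is to reduce the statement to a computation at each height-$1$ prime of $\Lambda$, exploiting that $\Lambda$ is a $2$-dimensional regular local ring, hence a UFD and in particular a normal (Krull) domain. Two standard inputs: (i) Fitting ideals are compatible with localization (the case $R' = \Lambda_\p$ of Lemma \ref{basic fitt lem}), so $\Ft(M)\Lambda_\p = \mathrm{Fitt}_{\Lambda_\p}(M_\p)$ for every prime $\p$; and (ii) for a DVR $(R,\mathfrak{n})$ and a finite-length module $T \cong \bigoplus_i R/\mathfrak{n}^{a_i}$, the presentation matrix diagonalizes over the PID $R$ with diagonal entries $\mathfrak{n}^{a_i}$, whence $\mathrm{Fitt}_R(T) = \mathfrak{n}^{\sum_i a_i} = \mathfrak{n}^{\mathrm{length}_R(T)}$. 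Since $M$ is finitely generated and torsion, $M_\p$ is a finite-length module over the DVR $\Lambda_\p$ for each height-$1$ prime $\p$, and combining (i) and (ii) gives $\Ft(M)\Lambda_\p = \p^{l_\p(M)}\Lambda_\p$, which is precisely $\Ch(M)\Lambda_\p$ by the definition of the characteristic ideal.

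From this local identity I would deduce the two asserted properties. The ideal $\Ch(M)$ is principal, hence divisorial, so $\Ch(M) = \bigcap_{\mathrm{ht}\,\p = 1}\Ch(M)\Lambda_\p$ (equivalently, dividing by a generator, this is the Krull-domain identity $\Lambda = \bigcap_{\mathrm{ht}\,\p=1}\Lambda_\p$). Therefore
\[
\Ft(M)\ \subseteq\ \bigcap_{\mathrm{ht}\,\p=1}\Ft(M)\Lambda_\p\ =\ \bigcap_{\mathrm{ht}\,\p=1}\Ch(M)\Lambda_\p\ =\ \Ch(M).
\]
Moreover $\Ch(M)/\Ft(M)$ is a finitely generated $\Lambda$-module whose localization at every height-$1$ prime vanishes, so $\Supp_\Lambda(\Ch(M)/\Ft(M)) \subseteq \{\m_\Lambda\}$, and Lemma \ref{fin lem} shows it is finite. (If $M$ itself is finite, then $\Supp_\Lambda(M)$ contains no height-$1$ prime, $\Ch(M) = \Lambda$, and the same argument applies, $\Ft(M)$ being $\m_\Lambda$-primary.)

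For uniqueness, let $(g) \subseteq \Lambda$ be any principal ideal with $\Ft(M) \subseteq (g)$ and $(g)/\Ft(M)$ finite. By Lemma \ref{fin lem} the quotient $(g)/\Ft(M)$ is supported only at $\m_\Lambda$, so $(g)\Lambda_\p = \Ft(M)\Lambda_\p = \Ch(M)\Lambda_\p$ for every height-$1$ prime $\p$. Since $(g)$ and $\Ch(M)$ are both principal, hence divisorial, intersecting over all height-$1$ primes yields $(g) = \bigcap_\p (g)\Lambda_\p = \bigcap_\p \Ch(M)\Lambda_\p = \Ch(M)$. There is no real obstacle in this argument; the only steps requiring a little care are the Fitting-ideal computation over a DVR and the Krull-domain identity $\Lambda = \bigcap_{\mathrm{ht}\,\p=1}\Lambda_\p$, both of which are standard.
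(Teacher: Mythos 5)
Your proof is correct and follows essentially the same route as the paper: localize at height-one primes, use the compatibility of Fitting ideals with localization together with the DVR computation $\mathrm{Fitt}_{\Lambda_\p}(M_\p)=\p^{l_\p(M)}\Lambda_\p$ to get $\Ft(M)_\p=\Ch(M)_\p$, deduce finiteness of the quotient from the support lemma, and get uniqueness from the fact that a principal ideal is determined by its localizations at height-one primes. Your write-up merely makes explicit (via divisoriality of principal ideals in the Krull domain $\Lambda$) the containment and uniqueness steps that the paper compresses into ``this determines the prime factorization of $f$.''
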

\begin{proof}
Using Lemma \ref{basic fitt lem}, we see that for any prime $\p$ of $\Lambda$,$$\mathrm{Fitt}_{\Lambda_\p}(M_\p)=\Ft(M)_\p .$$
Using the fact that $\Lambda_\p$ is a DVR for a height 1 prime $\p$, we have $\mathrm{Fitt}_{\Lambda_\p}(M_\p) = \p^{l_\p(M)}$ by the structure theorem for modules over a PID. We have then that $\Ft(M)_\p=\Ch(M)_\p$ for all $\p$ of height 1. Lemma \ref{fin lem} then implies that $\Ch(M)/\Ft(M)$ is finite.

For the uniqueness, suppose $\Ft(M) \subset (f)$ has finite quotient. Then $\Ft(M)_\p = (f)_\p$ for each height one prime $\p$. This determines the prime factorization of $f$.
\end{proof}

Using this relation, we can deduce the  following.

\begin{lem}\label{fitt lem}
Let $M$ and $N$ be two finitely generated torsion $\Lambda$-modules. Assume that $\Ft(N) \subset \Ch(M)$. If a morphism $f: M \to N$ has finite cokernel, then it has finite kernel.
\end{lem}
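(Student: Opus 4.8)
The plan is to reduce the finiteness of $\ker(f)$ to a local computation at the primes of $\Lam$. Since $M$ is finitely generated over the noetherian ring $\Lam$, its submodule $\ker(f)$ is also finitely generated, so by Lemma \ref{fin lem} it suffices to show $\Supp_\Lam(\ker f) \subset \{\m_\Lambda\}$; as $\Lam$ has dimension $2$, this amounts to checking that $(\ker f)_\p = 0$ at the zero ideal and at every height-one prime $\p$. At the zero ideal this is immediate, since $M$ is torsion and hence $M_{(0)} = 0$.

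Now I would fix a height-one prime $\p$, so that $\Lam_\p$ is a discrete valuation ring, with uniformizer $\pi$ say. Both $M_\p$ and $N_\p$ are finite-length $\Lam_\p$-modules, of lengths $l_\p(M)$ and $l_\p(N)$. Because $\coker(f)$ is finite, Lemma \ref{fin lem} gives $(\coker f)_\p = 0$, so $f_\p \colon M_\p \to N_\p$ is surjective; comparing lengths yields $l_\p(M) \ge l_\p(N)$, with $f_\p$ injective if and only if equality holds. For the reverse inequality I would invoke the Fitting-ideal hypothesis: by Lemma \ref{basic fitt lem}, $\mathrm{Fitt}_{\Lam_\p}(N_\p) = \Ft(N)_\p$, and the structure theorem for modules over the DVR $\Lam_\p$ identifies this ideal with $(\pi^{l_\p(N)})$ (the same computation made in the proof of Lemma \ref{wk lem}); likewise $\Ch(M)_\p = (\pi^{l_\p(M)})$. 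Localizing the inclusion $\Ft(N) \subset \Ch(M)$ at $\p$ therefore gives $(\pi^{l_\p(N)}) \subset (\pi^{l_\p(M)})$, i.e. $l_\p(N) \ge l_\p(M)$. Hence $l_\p(M) = l_\p(N)$, $f_\p$ is injective, and $(\ker f)_\p = 0$.

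Combining the two cases, $\Supp_\Lam(\ker f) \subset \{\m_\Lambda\}$, and Lemma \ref{fin lem} finishes the proof. I do not expect a serious obstacle here; the only point needing care is the bookkeeping of how Fitting and characteristic ideals behave under localization at a height-one prime, which is precisely the content of Lemma \ref{basic fitt lem} combined with the structure theorem over a DVR.
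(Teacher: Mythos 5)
Your proof is correct. It is in substance the same comparison of multiplicities at height-one primes that the paper makes, but packaged locally rather than globally: the paper applies multiplicativity of $\Ch$ to the exact sequence $0 \to \ker(f) \to M \to N \to \coker(f) \to 0$ to get $\Ch(M) \subset \Ch(N)$, then uses the hypothesis $\Ft(N) \subset \Ch(M)$ together with the uniqueness clause of Lemma \ref{wk lem} to force $\Ch(M) = \Ch(N)$, and concludes $\Ch(\ker f) = \Lambda$ by multiplicativity again. You instead fix a height-one prime $\p$, extract the two inequalities $l_\p(M) \ge l_\p(N)$ (from surjectivity of $f_\p$) and $l_\p(N) \ge l_\p(M)$ (from localizing the Fitting-ideal inclusion), and deduce $(\ker f)_\p = 0$ directly from length additivity over the DVR $\Lambda_\p$. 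Your route avoids invoking Lemma \ref{wk lem} as a black box (though it reuses the computation $\Ft(N)_\p = (\pi^{l_\p(N)})$ from its proof) and makes the mechanism more transparent; the paper's version is shorter because it leans on the already-established characteristic-ideal formalism. All the supporting steps you cite (Lemma \ref{fin lem} for reducing to support, Lemma \ref{basic fitt lem} for localizing Fitting ideals, vanishing at $(0)$ by torsionness) are valid.
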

\begin{proof}
Indeed, if $f$ has finite cokernel, then 
$$
\Ch(M) = \Ch(\ker(f))\Ch(N),
$$
and so
$$
\Ch(M) \subset \Ch(N).
$$
Since $\Ch(N)/\Ft(N)$ is finite, this implies that $\Ch(M)/\Ft(N)$ is finite. By Lemma \ref{wk lem}, this implies that $\Ch(N)=\Ch(M)$. The result follows by multiplicativity of characteristic ideals.
\end{proof}

Department of Mathematics, Eckhart Hall, University of Chicago, Chicago, Illinois 60637

email: pwake@math.uchicago.edu
\end{document}